\journal{\textnormal{$\cdots$}}
\theoremstyle{plain}
\newtheorem{theorem}{Theorem}[section]
\newtheorem{proposition}[theorem]{Proposition}
\newtheorem{lemma}[theorem]{Lemma}
\newtheorem{corollary}[theorem]{Corollary}
\theoremstyle{definition}
\newtheorem{example}[theorem]{Example}
\newcommand{\ro}{\circ}
\newcommand{\vecf}{\mathfrak{X}}
\newcommand{\met}{\langle\cdot\,,\cdot\rangle}
\newcommand{\rel}{\mathbb{R}}
\newcommand{\intg}{\mathds{Z}}
\newcommand{\cplx}{\mathds{C}}
\newcommand{\n}{\nabla}
\newcommand{\nc}{\n^{0,1}}
\newcommand{\ns}{\n^\mathbf{s}}
\newcommand{\w}{\omega}
\newcommand{\et}{\quad \textnormal{and}\quad}
\newcommand{\id}{\operatorname{Id}}
\newcommand{\el}{\operatorname{L}}
\newcommand{\tr}{\operatorname{tr}}
\newcommand{\A}{\mathbf{A}}
\newcommand{\End}{\operatorname{End}}
\newcommand{\g}{\mathfrak{g}}
\newcommand{\h}{\mathfrak{h}}
\newcommand{\m}{\mathfrak{m}}
\newcommand{\Ad}{\operatorname{Ad}}
\newcommand{\ad}{\operatorname{ad}}
\newcommand{\Span}{\operatorname{span}}
\newcommand{\Aut}{\operatorname{Aut}}
\newcommand{\SL}{\operatorname{SL}}
\newcommand{\T}{\operatorname{T}}
\newcommand{\lN}{\operatorname{L}^\n}
\newcommand{\Ll}{\operatorname{L}}
\newcommand{\la}{\operatorname{L}^{0,1}}
\newcommand{\ra}{\operatorname{R}^{0,1}}
\newcommand{\ls}{\operatorname{L}^\mathbf{s}}
\newcommand{\rs}{\operatorname{R}^\mathbf{s}}
\newcommand{\lm}{\operatorname{L}^{1,0}}
\newcommand{\ric}{\operatorname{ric}}
\newcommand{\LS}{\operatorname{L}}
\newcommand{\F}{\operatorname{W}}
\newcommand{\J}{\operatorname{J}}
\numberwithin{equation}{section}
\newcommand{\nlp}{\mathfrak{n}}
\newcommand{\U}{\operatorname{U}}
\newcommand{\I}{\operatorname{I}}
\newcommand{\SU}{\operatorname{SU}}
\newcommand{\Aa}{x_1}
\newcommand{\Ba}{x_2}
\newcommand{\Ab}{x_3}
\newcommand{\Bb}{x_4}
\newcommand{\Ac}{x_5}
\newcommand{\Bc}{x_6}
\newcommand{\Ca}{y_1}
\newcommand{\Da}{y_2}
\newcommand{\Cb}{y_3}
\newcommand{\Db}{y_4}
\newcommand{\Cc}{y_5}
\newcommand{\Dc}{y_6}
\newcommand{\aj}{\divideontimes}
\newcommand{\Kc}{K^{0,1}}
\newcommand{\Ks}{K^\mathbf{s}}
\newcommand{\nl}{\mathfrak{n}}
\newcommand{\lab}{\operatorname{L}^{a,b}}
\newcommand{\nab}{\n^{a,b}}
\newcommand{\ab}{(a,b)}
\newcommand{\somjk}{\displaystyle\sum_{j=1}^{k}}
\newcommand{\somjl}{\displaystyle\sum_{j=1}^{\ell}}
\begin{document}

\begin{frontmatter}

\title{\textbf{Some invariant connections on symplectic reductive homogeneous spaces}\vskip 0.5cm \today}

\author[l1]{Abdelhak Abouqateb}
\ead{a.abouqateb@uca.ac.ma}
\author[l1]{Othmane Dani}
\ead{othmanedani@gmail.com}
\address[l1]{Department of Mathematics, Faculty of Sciences and Technologies, Cadi Ayyad University  B.P.549 Gueliz Marrakesh, Morocco}
	
\begin{abstract}
A symplectic reductive homogeneous space is a pair $(G/H,\Omega)$, where $G/H$ is a reductive homogeneous $G$-space and $\Omega$ is a $G$-invariant symplectic form on it. The main examples include symplectic Lie groups, symplectic symmetric spaces, and flag manifolds. This paper focuses on the existence of a \textit{natural} symplectic connection on $(G/H,\Omega)$. First, we introduce a family $\{\nab\}_{\ab\in\rel^2}$ of $G$-invariant connection on $G/H$, and establish that $\n^{0,1}$ is flat if and only if $(G/H,\Omega)$ is locally a symplectic Lie group. Next, we show that among all $\{\nab\}_{\ab\in\rel^2}$, there exists a unique symplectic connection, denoted by $\ns$, corresponding to $a=b=\tfrac{1}{3}$, a fact that seems to have previously gone unnoticed. We then compute its curvature and Ricci curvature tensors. Finally, we demonstrate that the $\SU(3)$-invariant preferred symplectic connection of the Wallach flag manifold $\SU(3)/\mathbb{T}^2$, introduced in the Cahen-Gutt-Rawnsley paper \cite{Cah1} (Lett. Math. Phys. 48 (1999), 353–364), coincides with the natural symplectic connection $\ns$, which is furthermore Ricci-parallel.\\
\noindent{\bf Mathematics Subject Classification 2020:} 53D05, 53B05, 14M17, 53C30, 70G45. \\
{\bf Keywords:}  Symplectic manifolds, linear and affine connections, homogeneous spaces and generalizations, differential geometry of homogeneous manifolds, differential geometric methods.
\end{abstract}

\end{frontmatter}	
	
\section{Introduction and main results}

A \textit{Fedosov manifold} is a triple $(M,\Omega,\n)$ where $(M,\Omega)$ is a symplectic manifold and $\n$ is a symplectic connection, i.e., $T^\n=0$ and $\n\Omega=0$. This notion was introduced first in \cite{Gel}. Of course, any symplectic manifold $(M,\Omega)$ can be endowed with a Fedosov structure, since symplectic connections always exist on symplectic manifolds. Nevertheless, in general there is no natural choice of such a structure. Hence, the simplest question that one may ask is the following (cf. \cite{Cah}): When is there a natural choice of a symplectic connection on $(M,\Omega)$? To have an answer we need to provide $(M,\Omega)$ with some extra structure. For example, if $(M,\Omega)$ admits an integrable almost complex structure $\J:TM\to TM,\,\J^2=-\id_{TM}$, which is compatible with $\Omega$, then we can choose the Levi-Civita connection associated to the Riemannian metric induced by $\Omega$ and $\J$. On the other hand, it was shown in \cite{Bie} that on any symplectic symmetric space, there is a natural choice of a symplectic connection. Moreover, in \cite{BenBo6} it was introduced a natural symplectic connection on any symplectic Lie group. Both classes (symplectic symmetric spaces and symplectic Lie groups) are particular examples of symplectic reductive homogeneous spaces which is a more general class that includes other things like flag manifolds (cf. \cite{Alek,Arv}). The spirit of this paper is to give an answer of the above question in this general class, i.e., in the case where $(M,\Omega)$ is a symplectic reductive homogeneous space.

A \textit{symplectic homogeneous space} $(G/H,\Omega)$ is a homogeneous $G$-space $G/H$ endowed with a $G$-invariant symplectic form $\Omega$. Symplectic homogeneous spaces are very interesting in their own, and they are extensively studied by many authors (see for instance \cite{Chu,Bo}). As mentioned above, in this work, we are interested in a subclass of symplectic homogeneous spaces $(G/H,\Omega)$ for which $G/H$ is reductive, i.e., there exists a vector subspace $\mathfrak{m}$ of the Lie algebra $\mathfrak{g}$ of $G$ such that $\mathfrak{g}=\mathfrak{h}\oplus\mathfrak{m}$ and $\Ad(H)(\mathfrak{m})=\mathfrak{m}$, where $\mathfrak{h}$ is the Lie algebra of $H$. Such a pair $(G/H,\Omega)$ will be called a \textit{symplectic reductive homogeneous $G$-space}.\\

Let $(G/H,\Omega)$ be a symplectic reductive homogeneous $G$-space, $\mathfrak{g}=\mathfrak{h}\oplus\mathfrak{m}$ a fixed reductive decomposition, and $\omega\in\bigwedge^2\mathfrak{m}^*$ the symplectic tensor induced by $\Omega$ (see Section \ref{Prelim} for notations). Since $\omega$ is nondegenerate, for any two real numbers $a,b\in\rel$, we can define a product $\lab:\mathfrak{m}\times\mathfrak{m}\to\mathfrak{m}$ on $\mathfrak{m}$ as follows:
\begin{equation}\label{na}
\omega\left(\lab_x(y),z\right):=a\,\omega\left([x,y]_\mathfrak{m},z\right)+b\,\omega\left([x,z]_\mathfrak{m},y\right).
\end{equation}

It is obvious that if $G/H$ is \textit{locally symmetric}, meaning, $[\m,\m]\subseteq\h$, then the product $\lab$ is just the trivial product on $\m$ for any $a,b\in\rel$. Consequently, $\lab$ defines a unique torsion-free $G$-invariant connection $\n^0$ on $G/H$ (cf. \cite{Nom1}). Furthermore, since $\Omega$ is $G$-invariant, it follows from \cite[pp. 193-194]{Kob2} that $\n^0$ is symplectic. Our first main result explores the properties of the products $\lab$ in the case where $G/H$ is not locally symmetric, i.e., $[\m,\m]\nsubseteq\h$. Specifically, we establish the following:

\begin{theorem}\label{thm1}
Let $(G/H,\Omega)$ be a symplectic reductive non-locally symmetric homogeneous $G$-space, and $\g=\h\oplus\m$ a fixed reductive decomposition. Then, for any $a,b\in\rel$, the product $\lab:\mathfrak{m}\times\mathfrak{m}\to\mathfrak{m}$ defined by \eqref{na} gives rise to a $G$-invariant connection $\nab$ on $G/H$. Moreover, \begin{itemize}
\item[$(1)$] $\nab$ preserves the symplectic form $\Omega$ if and only if $\,a=b$; 
\item[$(2)$] $\nab$  is torsion-free  if and only if $\,a=\tfrac{1-b}{2}$;  
\item[$(3)$] $\nab$ is symplectic if and only if $\,a=b=\frac{1}{3}$.
\end{itemize}
\end{theorem}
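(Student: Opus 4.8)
\emph{Sketch of the intended argument.} The plan is to work entirely at the base point $o=eH$ through the Nomizu correspondence. Recall (see \cite{Nom1} and \cite[Ch. X]{Kob2}) that $G$-invariant connections on the reductive homogeneous space $G/H$ are in bijection with $\Ad(H)$-equivariant bilinear maps $\Lambda\colon\m\times\m\to\m$ (written $\Lambda_x(y)$); for the associated connection the torsion at $o$ is $T^{\n}(x,y)=\Lambda_x(y)-\Lambda_y(x)-[x,y]_\m$, and a $G$-invariant tensor $S$ with value $S_o$ at $o$ is $\n$-parallel if and only if $\Lambda_x$ annihilates $S_o$ for every $x\in\m$, where $\Lambda_x\in\mathfrak{gl}(\m)$ acts on tensors as a derivation. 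First I would verify that the product $\lab$ defined by \eqref{na} is $\Ad(H)$-equivariant: since $\omega$ is $\Ad(H)$-invariant and $\Ad(h)$ preserves the reductive splitting $\g=\h\oplus\m$, the right-hand side of \eqref{na} is unchanged under $(x,y,z)\mapsto(\Ad(h)x,\Ad(h)y,\Ad(h)z)$, and nondegeneracy of $\omega$ then gives $\lab_{\Ad(h)x}(\Ad(h)y)=\Ad(h)\,\lab_x(y)$. This already yields the $G$-invariant connection $\nab$.

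For $(1)$, the parallelism criterion applied to the $2$-form $\omega$ says that $\nab\Omega=0$ is equivalent to $\omega(\lab_x(y),z)+\omega(y,\lab_x(z))=0$ for all $x,y,z\in\m$. Substituting \eqref{na} twice (once for the pair $(y,z)$, once for $(z,y)$) and collecting terms, this becomes
\[
(a-b)\bigl(\omega([x,y]_\m,z)-\omega([x,z]_\m,y)\bigr)=0\qquad\text{for all }x,y,z\in\m.
\]
If $a=b$ this holds; conversely I would show the bracketed expression is not identically zero under the non-locally-symmetric hypothesis. Set $T(x,y,z):=\omega([x,y]_\m,z)$, which is antisymmetric in $(x,y)$. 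If $T$ were also symmetric in $(y,z)$, alternating the two symmetries would force $T(x,y,z)=-T(x,y,z)$, hence $T\equiv0$, hence $[\m,\m]\subseteq\h$ by nondegeneracy of $\omega$ — contradicting $[\m,\m]\nsubseteq\h$. Therefore $\nab\Omega=0\iff a=b$.

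For $(2)$, the torsion formula gives $T^{\n}=0$ iff $\lab_x(y)-\lab_y(x)=[x,y]_\m$ for all $x,y$. Pairing with $z$ via $\omega$ and using \eqref{na} produces
\[
\omega\bigl(\lab_x(y)-\lab_y(x),z\bigr)=2a\,\omega([x,y]_\m,z)+b\bigl(\omega([x,z]_\m,y)-\omega([y,z]_\m,x)\bigr).
\]
The one genuine input here is that $\Omega$ is closed: translating $d\Omega=0$ to the base point (the Chevalley--Eilenberg computation for invariant forms on a reductive homogeneous space, in which only the $\m$-projections $[\cdot,\cdot]_\m$ of brackets survive) gives the cyclic identity $\omega([x,y]_\m,z)+\omega([y,z]_\m,x)+\omega([z,x]_\m,y)=0$ on $\m$. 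Using it to eliminate $\omega([y,z]_\m,x)$ collapses the $b$-term to $b\,\omega([x,y]_\m,z)$, so the right-hand side above equals $(2a+b)\,\omega([x,y]_\m,z)$. Since $[\m,\m]_\m\neq0$ and $\omega$ is nondegenerate, one can pick $x,y,z$ with $\omega([x,y]_\m,z)\neq0$; hence $T^{\n}=0\iff 2a+b=1\iff a=\tfrac{1-b}{2}$.

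Finally $(3)$ is simply the conjunction of $(1)$ and $(2)$: $\nab$ is symplectic iff $a=b$ and $2a+b=1$, i.e. $a=b=\tfrac13$. Beyond routine bookkeeping, the two steps I expect to need the most care are the symmetry/antisymmetry argument in $(1)$ and, above all, the precise translation of $d\Omega=0$ into the cyclic identity used in $(2)$ — specifically, justifying that the $\h$-parts of the brackets contribute nothing and that the cyclic identity cancels the $b$-dependent terms \emph{exactly}.
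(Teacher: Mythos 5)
Your proposal is correct and follows essentially the same route as the paper: the Nomizu correspondence, the $\Ad(H)$-equivariance check via nondegeneracy of $\omega$, the identity $\lab_x(y)-\lab_y(x)=(2a+b)[x,y]_\m$ obtained from the cocycle condition, and the conjunction of $(1)$ and $(2)$ for $(3)$. The only cosmetic difference is in the converse of $(1)$: the paper applies the cocycle identity once more to rewrite the obstruction as $(b-a)\,\omega\left(x,[y,z]_\m\right)$ and then invokes nondegeneracy together with $[\m,\m]\nsubseteq\h$, whereas you rule out the degenerate case by observing that a trilinear form antisymmetric in $(x,y)$ and symmetric in $(y,z)$ must vanish --- both arguments are valid and equally short.
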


By Theorem \ref{thm1}, it is evident that the torsion-free $G$-invariant connection $\n^{0,1}$, corresponding to the parameters $a=0$ and $b=1$, generalizes the canonical flat connection of a symplectic Lie group. Given this distinctive property, we shall refer to it as the \textit{Zero-One connection}. Our second main result presents notable algebraic and geometric results concerning connected symplectic reductive homogeneous spaces with flat Zero-One connection.

\begin{theorem}\label{Theo1Flat1}
Let $(G/H,\Omega)$ be a connected symplectic reductive homogeneous $G$-space, and $\g=\h\oplus\m$ a fixed reductive decomposition. Assume that $G$ acts almost effectively on $G/H$, and the Zero-One connection $\n^{0,1}$ is flat. Then, the subgroup $G_\m:=\langle\exp_G(\m)\rangle\subset G$ generated by $\m$ satisfies:
\begin{itemize}
\item[$(1)$] $G_\m$ is a connected normal (immersed) Lie subgroup of $G$. Moreover, $(G_\m,\imath_{G_\m}^*\pi_G^*\Omega)$ is a symplectic Lie group, where $\pi_G:G\to G/H$ is the canonical projection, and $\imath_{G_\m}:G_\m\hookrightarrow G$ is the canonical injection;
\item[$(2)$] $G_\m$ acts transitively on $G/H$, and preserving both the symplectic form $\Omega$ and the Zero-One connection $\n^{0,1}$. Moreover, the isotropy group at $H\in G/H$ is equal to the discrete Lie subgroup $\Gamma:=G_\m\cap H$ of $G_\m$. In particular, $(G/H,\Omega)$ is locally a symplectic Lie group;
\item[$(3)$] If $G/H$ is simply connected, then $\pi_{G_\m}:(G_\m,\pi_{G_\m}^*\Omega)\to(G_\m/\Gamma,\Omega)$ is a symplectomorphism, where $\pi_{G_\m}:G_\m\to G_\m/\Gamma$ is the canonical projection. In particular, $(G/H,\Omega)$ is a symplectic Lie group;
\item[$(4)$] If $G/H$ is compact, then $G_\m$ is abelian and hence $(G/H,\Omega)$ is a symplectic torus.
\end{itemize} 
\end{theorem}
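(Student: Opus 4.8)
The plan is to translate the flatness of $\n^{0,1}$ into an algebraic statement about the product $\la$ on $\m$, and then build the Lie subgroup $G_\m$ from this data. First I would recall that the curvature of a $G$-invariant connection associated to a product $\Ll$ on $\m$ is governed, at the base point, by the bracket relations between the left-multiplication operators $\la_x$ and the $\m$-component of the Lie bracket; flatness of $\n^{0,1}$ should be equivalent to saying that $x\mapsto \la_x$ is a Lie algebra morphism into $\End(\m)$ with the appropriate compatibility, and in particular that $\la_{[x,y]_\m} = [\la_x,\la_y] - \la_{\ns\text{-term}}$-type identities collapse so that $[\m,\m]\subseteq\m$, i.e. $\m$ itself becomes a Lie subalgebra. (This is the mechanism by which the Zero-One connection "sees" a symplectic Lie group: when it is flat, $\la$ is exactly left-multiplication in a left-symmetric algebra structure on $\m$.) Once $\m$ is a subalgebra, the almost-effectiveness hypothesis — which forces $\h$ to contain no nonzero ideal of $\g$ — combined with $\Ad(H)\m=\m$ should upgrade $\m$ to an \emph{ideal}: one checks $[\h,\m]\subseteq\m$ already holds by reductivity, and $[\m,\m]\subseteq\m$ was just obtained, so $\m$ is an ideal, and $G_\m=\langle\exp_G\m\rangle$ is the corresponding connected normal immersed subgroup. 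That $\omega$ restricts to a left-invariant symplectic form on $G_\m$ (and equals $\imath_{G_\m}^*\pi_G^*\Omega$ at the identity, hence everywhere by invariance) gives part $(1)$.

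For part $(2)$: since $\m$ maps isomorphically onto $\T_{eH}(G/H)$ and $G_\m$ is generated by $\exp_G\m$, the orbit of $eH$ under $G_\m$ is open; being also closed (it is a subgroup orbit, or by a connectedness argument on $G/H$) it is all of $G/H$, so $G_\m$ acts transitively. The isotropy at $eH$ is $G_\m\cap H$, and its Lie algebra is $\m\cap\h=0$, so it is discrete; call it $\Gamma$. Invariance of $\Omega$ and of $\n^{0,1}$ under $G_\m\subseteq G$ is inherited. Then $G/H\cong G_\m/\Gamma$ as $G_\m$-spaces, and since $\Gamma$ is discrete this exhibits $(G/H,\Omega)$ as locally isomorphic to the symplectic Lie group $(G_\m,\omega)$. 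For part $(3)$: if $G/H$ is simply connected, then the covering $G_\m\to G_\m/\Gamma\cong G/H$ must be trivial (a connected simply connected space has no nontrivial connected covering), so $\Gamma$ is trivial and $G_\m\to G/H$ is a diffeomorphism intertwining $\pi_{G_\m}^*\Omega$ with $\Omega$; thus $(G/H,\Omega)$ is itself a symplectic Lie group.

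For part $(4)$: if $G/H$ is compact, then $G_\m/\Gamma$ is compact, so $G_\m$ is a Lie group admitting a cocompact discrete subgroup and carrying a left-invariant symplectic form. A left-invariant symplectic form on a Lie group forces the first Betti-type obstruction: the associated left-symmetric algebra structure on $\g_\m=\m$ has no nonzero trace-type functional obstruction, and more to the point, compact Lie groups admitting a left-invariant symplectic form must be tori — indeed a compact symplectic homogeneous space of this form has $b_1\neq 0$ forcing a toral factor, and unimodularity plus the existence of the symplectic form on a compact quotient forces $\m$ to be abelian (this is the classical fact that a symplectic Lie group with a lattice, when compact, is a torus; the key input is that $\m$ is unimodular and a left-invariant symplectic form on a unimodular Lie algebra satisfying the lattice condition cannot exist unless the algebra is abelian). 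Hence $G_\m$ is abelian, $G_\m/\Gamma$ is a torus, and $(G/H,\Omega)$ is a symplectic torus.

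The main obstacle I expect is the first step: extracting cleanly from "$\n^{0,1}$ flat" the algebraic identity that makes $\la$ a left-symmetric (Vinberg) product and forces $[\m,\m]\subseteq\m$. The curvature formula for invariant connections on reductive spaces has both an "algebraic" $R(x,y)z$ piece built from $\la$ and a piece involving $[x,y]_\h$ acting via the isotropy representation, and one must show the vanishing of the full curvature tensor — not just its value at one point, though $G$-invariance reduces to that — genuinely yields $[\m,\m]\subseteq\m$ rather than some weaker twisted condition. Handling the interplay with $\Ad(H)$ to promote "subalgebra" to "ideal", and making the transitivity/covering arguments in $(2)$–$(3)$ rigorous for \emph{immersed} (not embedded) subgroups, are the other points needing care; part $(4)$ then reduces to a citation of the structure theory of compact symplectic Lie groups.
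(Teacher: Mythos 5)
Your overall architecture matches the paper's: flatness forces $\m$ to be an ideal, $G_\m$ is the corresponding connected normal integral subgroup, transitivity follows from an orbit argument (the paper instead quotes $G=G_\m H$ from Hilgert--Neeb), and parts $(3)$--$(4)$ are a covering-space argument and a citation. But the step you yourself flag as ``the main obstacle'' is exactly where your sketch is wrong, and not just incomplete. Flatness of $\n^{0,1}$ does \emph{not} by itself yield $[\m,\m]\subseteq\m$. At the origin the curvature is $\Kc(x,y)=D_{x,y}^{\divideontimes}-D_{y,x}^{\divideontimes}$, where $D_{x,y}(z)=[[x,z]_\h,y]$: the purely algebraic part of the curvature built from $\la$ cancels against part of the isotropy term $\ad_{[x,y]_\h}$, and what survives involves only the $\h$-component of $[\m,\m]$ acting on $\m$. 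So flatness is equivalent to the symmetry $D_{x,y}=D_{y,x}$, and one still needs a short chain of identities (using $D_{x,y}(z)=-D_{z,y}(x)$, a consequence of the Jacobi identity) to upgrade that symmetry to $D\equiv 0$, i.e.\ $[[\m,\m]_\h,\m]=\{0\}$. Only here does almost effectiveness enter: faithfulness of the isotropy representation $\ad^\m:\h\to\End(\m)$ converts $[[\m,\m]_\h,\m]=\{0\}$ into $[\m,\m]_\h=0$, hence $[\m,\m]\subseteq\m$. Your proposal inverts this logic: you claim flatness alone gives $[\m,\m]\subseteq\m$ and that almost effectiveness is then needed to promote ``subalgebra'' to ``ideal''; but once $[\m,\m]\subseteq\m$ holds, $\m$ is automatically an ideal by reductivity ($[\h,\m]\subseteq\m$), with no effectiveness hypothesis, whereas without almost effectiveness flatness gives only the weaker ``twisted'' condition you were worried about. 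The hypothesis is used in the wrong place and the computation that actually carries the theorem is absent.

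Two smaller points. Your open-plus-closed orbit argument for transitivity in $(2)$ is a legitimate alternative to the paper's route, but you must justify why the complement of the orbit is open (orbits of immersed subgroups are not closed for free); normality of $G_\m$ and transitivity of $G$ give that \emph{every} $G_\m$-orbit is open, which is what makes each orbit closed, and connectedness finishes it. For $(4)$ you offer only an appeal to ``the classical fact that a symplectic Lie group with a cocompact lattice is a torus''; the paper itself does no more than cite Huckleberry here, so you are not worse off, but your intermediate claims (that $b_1\neq 0$ forces a toral factor, that unimodularity plus compactness of the quotient forces $\m$ abelian) are assertions rather than arguments, and the reduction cannot go through compactness of $G_\m$ itself, which is not known.
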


Invoking Theorem \ref{thm1} once again, we refer to the symplectic connection $\n^{\frac{1}{3},\frac{1}{3}}$ as the \textit{natural symplectic connection} of $(G/H,\Omega)$, denoted by $\ns$. Our third main result is:

\begin{theorem}\label{ThmSympNilman}
Let $(G/H,\Omega)$ be a connected symplectic reductive homogeneous $G$-space, and $\g=\h\oplus\m$ a fixed reductive decomposition. Assume that $G$ acts almost effectively on $G/H$, and $\n^{0,1},\,\n^\mathbf{s}$ are both flat. Then, $(G/H,\Omega)$ is a symplectic nilmanifold.
\end{theorem}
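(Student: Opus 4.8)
The plan is to use Theorem~\ref{Theo1Flat1} to reduce the statement to a purely algebraic assertion about symplectic Lie algebras, and then to squeeze nilpotency out of the flatness of $\ns$. Since $\nc$ is flat, Theorem~\ref{Theo1Flat1} gives that $G_\m$ is a connected normal immersed Lie subgroup acting transitively on $G/H$ with discrete isotropy $\Gamma=G_\m\cap H$, and that $(G/H,\Omega)$ is locally the symplectic Lie group $(G_\m,\imath_{G_\m}^*\pi_G^*\Omega)$. The Lie algebra $\g_\m$ of $G_\m$ is the subalgebra of $\g$ generated by $\m$, so $\m\subseteq\g_\m$; transitivity with discrete isotropy forces $\dim\g_\m=\dim(G/H)=\dim\m$, whence $\g_\m=\m$. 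Thus $\m$ is a subalgebra of $\g$ (in fact an ideal), $(\m,\w)$ is a symplectic Lie algebra, and under the identification $G/H\cong G_\m/\Gamma$ the connection $\nc$ is the canonical flat connection of the symplectic Lie group $G_\m$ while $\ns$ is its natural symplectic connection. It therefore suffices to show: \emph{if the natural symplectic connection of a symplectic Lie algebra $(\g,\w)$ is flat, then $\g$ is nilpotent.}

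Next I reformulate flatness of $\ns$ algebraically. Let $x\cdot y:=\la_x(y)$ be the canonical left-symmetric product of $(\g,\w)$, so $\w(x\cdot y,z)=\w([x,z],y)$, $[x,y]=x\cdot y-y\cdot x$, and the left and right multiplications $L_x,R_x$ satisfy $\ad_x=L_x-R_x$, $[L_x,L_y]=L_{[x,y]}$, together with the identity $R_{[x,y]}=[L_x,R_y]+[R_x,L_y]-[R_x,R_y]$ valid in any left-symmetric algebra. Formula \eqref{na} with $a=b=\tfrac{1}{3}$ yields $\ns_x(y)=\tfrac{2}{3}\,x\cdot y-\tfrac{1}{3}\,y\cdot x=\tfrac{1}{3}(L_x+\ad_x)(y)$; since $\ns$ is torsion-free, it is flat exactly when $x\mapsto\ns_x$ is a representation of $\g$ on $\g$, and a short computation with the two identities above rewrites this condition as
\begin{equation}\label{eqStarNil}
[R_x,R_y]=R_{[x,y]}-2L_{[x,y]}\qquad(x,y\in\g).
\end{equation}
One checks at once that \eqref{eqStarNil} fails for $\mathfrak{aff}(\rel)$, in agreement with the statement.

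The final, and hardest, step is to deduce nilpotency of $\g$ from \eqref{eqStarNil}; the aim is to prove that $\ad_x$ is a nilpotent endomorphism for every $x\in\g$ and then to invoke Engel's theorem. Taking traces in \eqref{eqStarNil} (commutators being traceless) gives $\operatorname{tr}L_u=\operatorname{tr}R_u=0$ for all $u\in[\g,\g]$, but this alone is too weak — it merely recovers $\operatorname{tr}L_x=-\operatorname{tr}\ad_x$, which holds on any symplectic Lie algebra. One must use \eqref{eqStarNil} as an operator identity: combining it with the left-symmetric axioms (for instance by iterating \eqref{eqStarNil} along the descending central series, or by analysing simultaneously the three $\g$-module structures on $\g$ carried by $L$, $\ad$ and $\ns=\tfrac{1}{3}(L+\ad)$, or by a dimension induction that splits off a $\w$-nondegenerate central ideal in the spirit of the double-extension theory of symplectic Lie groups underlying \cite{BenBo6,Bie}) one obtains, via an intermediate solvability statement, that every $\ad_x$ acts nilpotently, hence that $\g$ — and therefore $G_\m$ — is nilpotent. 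Since $\Gamma\subset G_\m$ is discrete and $\Omega$ is induced by the left-invariant symplectic form $\imath_{G_\m}^*\pi_G^*\Omega$, we conclude that $(G/H,\Omega)\cong(G_\m/\Gamma,\Omega)$ is a symplectic nilmanifold.

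The principal obstacle is precisely this last passage from the identity \eqref{eqStarNil} to nilpotency: the trace consequences of \eqref{eqStarNil} are automatically satisfied on every symplectic Lie algebra, so the argument has to exploit the full operator content of \eqref{eqStarNil}, and the most promising route seems to be a dimension induction reducing to the known structure theory of unimodular (in particular nilpotent) symplectic Lie algebras.
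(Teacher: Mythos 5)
Your reduction is exactly the one the paper makes: Theorem \ref{Theo1Flat1} (using flatness of $\nc$) identifies $(G/H,\Omega)$ with $(G_\m/\Gamma,\Omega)$ for a discrete $\Gamma\subset G_\m$, so the theorem comes down to the purely algebraic claim that a symplectic Lie algebra whose natural symplectic connection is flat must be nilpotent. Your reformulation of that flatness as the operator identity $[R_x,R_y]=R_{[x,y]}-2L_{[x,y]}$ is consistent with the paper's curvature formula \eqref{KsLi}, and your observation that the trace consequences of this identity are vacuous on a symplectic Lie algebra is correct and correctly diagnosed as insufficient.

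The genuine gap is the step you yourself flag as the ``principal obstacle'': you never actually derive nilpotency from that identity. Listing three candidate strategies (iterating along the descending central series, comparing the module structures carried by $L$, $\ad$ and $\ns$, or a dimension induction via double extensions) and then asserting that ``one obtains, via an intermediate solvability statement, that every $\ad_x$ acts nilpotently'' is not an argument; none of the routes is carried out, and it is far from clear that any of them closes quickly. The implication ``flat symplectic Lie algebra $\Rightarrow$ nilpotent'' is a substantive structure theorem: it is precisely the result of Boucetta--El Ouali--Lebzioui \cite{Oili} that the paper invokes, and the paper's entire proof of Theorem \ref{ThmSympNilman} is your reduction followed by that citation. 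So either cite \cite{Oili} at this point, in which case your proof coincides with the paper's, or supply the missing argument in full --- which amounts to reproving the main theorem of that reference and cannot be dispatched as a short computation with the left-symmetric axioms.
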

 
A symplectic connection $\n$ on a symplectic manifold $(M,\Omega)$ is called \textit{preferred} if its Ricci curvature tensor $\ric^{\n}$ satisfies 
$$\left(\n_{X}\ric^{\n}\right)\left(Y,Z\right)+\left(\n_{Z}\ric^{\n}\right)\left(X,Y\right)+\left(\n_{Y}\ric^{\n}\right)\left(Z,X\right)=0,$$ for any vector fields $X,Y$ and $Z\in\vecf(M)$.
In \cite[pp. $362$-$363$]{Cah1} it was shown that the Wallach flag manifold $\SU(3)/\mathbb{T}^2$ admits a unique
$\SU(3)$-invariant preferred symplectic connection; however, the explicit expression of this connection was not provided. Our fourth main result shows that:

\begin{theorem}\label{SU3Pref}
The $\SU(3)$-invariant preferred symplectic connection of the Wallach flag manifold $\SU(3)/\mathbb{T}^2$ coincides with the natural symplectic connection $\ns$. Moreover, it is Ricci-parallel.
\end{theorem}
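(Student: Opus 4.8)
The plan is to reduce the statement to a concrete linear-algebra computation on the Wallach flag manifold, and then to verify Ricci-parallelism directly. First I would recall the standard reductive setup for $\SU(3)/\mathbb{T}^2$: write $\g=\mathfrak{su}(3)$, take $\h=\mathfrak{t}^2$ the diagonal Cartan subalgebra, and let $\m$ be its orthogonal complement with respect to (minus) the Killing form. The complexification of $\m$ splits as a sum of three root spaces $\g_{\alpha_{12}}\oplus\g_{\alpha_{13}}\oplus\g_{\alpha_{23}}$ (plus conjugates), and the $\SU(3)$-invariant symplectic form $\Omega$ corresponds to an element $\w\in\bigwedge^2\m^*$ whose coefficients on these three $2$-planes are, up to a common scale, the sums of positive roots — this is the flag-manifold Kirillov–Kostant–Souriau form. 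With an explicit real basis $\{e_1,\dots,e_6\}$ of $\m$ adapted to these planes, I would tabulate the brackets $[e_i,e_j]_\m$ and the matrix of $\w$, which are all elementary $3\times3$ matrix computations.

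Next I would compute the natural symplectic connection $\ns=\n^{1/3,1/3}$ from Theorem~\ref{thm1}, i.e.\ the product $\ls_x(y)=\LS^{1/3,1/3}_x(y)$ determined by \eqref{na} with $a=b=\tfrac13$; concretely, invert $\w$ to solve for $\ls_{e_i}(e_j)$ in the basis. From this, standard formulas for invariant connections on reductive spaces (cf.\ Nomizu) give the curvature tensor $R^\ns$ and then, by tracing, the Ricci tensor $\ric^{\ns}$ as an element of $\bigwedge^2\m^*$ (symplectic Ricci tensors are antisymmetric). By $\SU(3)$-invariance — more precisely by invariance under the Weyl-group symmetries permuting the three root planes and the action of the maximal torus — $\ric^{\ns}$ must be a linear combination of the same three $2$-plane forms as $\w$; I expect it to come out proportional to $\w$ itself, which would immediately yield $\n^\ns\ric^{\ns}=\n^\ns\Omega=0$, hence Ricci-parallel, and a fortiori preferred. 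So the proof of the "Moreover" clause should fall out of the curvature computation almost for free once the invariance argument is in place.

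To identify $\ns$ with the Cahen–Gutt–Rawnsley preferred connection, I would invoke their uniqueness result from \cite{Cah1}: there is exactly one $\SU(3)$-invariant preferred symplectic connection on $\SU(3)/\mathbb{T}^2$. Since $\ns$ is $\SU(3)$-invariant (Theorem~\ref{thm1} gives $G$-invariance), symplectic (again Theorem~\ref{thm1}(3)), and — by the computation just described — has parallel, in particular cyclic-parallel, Ricci tensor, it satisfies the preferred condition; uniqueness then forces it to be \emph{the} connection of \cite{Cah1}. Thus the identification is a corollary of the Ricci-parallelism computation plus their uniqueness theorem, and I would not need to reconstruct their connection explicitly.

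The main obstacle is the bookkeeping in the curvature and Ricci computation: one must carefully separate the $\m$- and $\h$-components of brackets, keep track of the invariant-connection curvature formula (which for a non-canonical invariant connection on a reductive space involves both $[\cdot,\cdot]_\m$ and $[\cdot,\cdot]_\h$ terms), and correctly normalize $\w$. I would mitigate this by exploiting symmetry aggressively: reduce all tensors to their components on the three root $2$-planes, use the $S_3$-symmetry permuting those planes, and check only a minimal set of structure constants by hand, leaving the rest to symmetry. If the direct route proves too heavy, a fallback is to compute $\ric^{\ns}$ only up to its (necessarily $1$-dimensional) space of invariant forms and pin down the scalar by evaluating on a single convenient pair of basis vectors.
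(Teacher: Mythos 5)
Your overall strategy --- compute $\ns$ explicitly on $\SU(3)/\mathbb{T}^2$, show it is Ricci-parallel, observe that Ricci-parallel implies preferred, and then invoke the uniqueness result of \cite{Cah1} to conclude the identification --- is exactly the paper's. However, the step by which you propose to obtain Ricci-parallelism ``almost for free'' contains a genuine error. The Ricci tensor of a torsion-free connection admitting a parallel volume form --- in particular of any symplectic connection --- is \emph{symmetric}, not antisymmetric (the paper notes this explicitly before computing $\ric^{\mathbf{s}}$). So $\ric^{\mathbf{s}}$ is not an element of $\bigwedge^2\m^*$, it cannot be ``proportional to $\w$'', and the deduction $\n^{\mathbf{s}}\ric^{\mathbf{s}}=\n^{\mathbf{s}}\Omega=0$ is unavailable. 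Moreover, the invariance argument you lean on does not pin the answer down: the isotropy representation of $\mathbb{T}^2$ on $\m$ splits into three pairwise inequivalent root planes, so the space of $\Ad(\mathbb{T}^2)$-invariant symmetric bilinear forms on $\m$ is three-dimensional, not one-dimensional; and the Weyl-group ($S_3$) symmetry permuting the three planes is not a symmetry of the problem, because the invariant symplectic form has unequal coefficients ($4$, $2$, $-2$ in the paper's normalization) on the three planes and hence is not Weyl-invariant --- so neither is $\ns$.

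Concretely, the paper finds $\ric^{\mathbf{s}}(X,Y)=-4\{2(x_1y_1+x_2y_2)+x_3y_3+x_4y_4+x_5y_5+x_6y_6\}$: a symmetric form with \emph{different} weights on the three root planes, proportional neither to $\w$ nor to the normal metric. Its parallelism is therefore a nontrivial fact. Since $\ric^{\mathbf{s}}$ is $\SU(3)$-invariant, $\n^{\mathbf{s}}\ric^{\mathbf{s}}=0$ is equivalent to each operator $\ls_X$ being skew-adjoint with respect to $\ric^{\mathbf{s}}$, i.e.\ $\ric^{\mathbf{s}}(\ls_X(Y),Z)+\ric^{\mathbf{s}}(Y,\ls_X(Z))=0$, and this must be (and in the paper is) verified by direct computation with the explicit matrix of $\ls_X$. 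Once that verification is in hand, your concluding logic (Ricci-parallel implies the cyclic condition, hence preferred; uniqueness from \cite{Cah1} then forces coincidence) is correct and matches the paper. As written, though, the symmetry shortcut at the heart of your argument would fail, and the middle step must be replaced by the explicit check.
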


\section{A family of invariant connections on symplectic reductive non-locally symmetric homogeneous spaces}\label{Prelim}

In this section, we review essential and well-known facts that will be employed later, then we prove our first main result.

Let $(G/H,\Omega)$ be a symplectic reductive homogeneous $G$-space, and  $\mathfrak{g}=\mathfrak{h}\oplus\mathfrak{m}$ a fixed reductive decomposition. A \textit{symplectic tensor}  $\omega$ on $\m$ is a nondegenerate, skew-symmetric bilinear form $\omega:\mathfrak{m}\times\mathfrak{m}\to{\mathbb{R}}$. A   
symplectic tensor  $\omega\in\bigwedge^2\m^*$ is called $\Ad(H)$-\textit{invariant} if 
\begin{equation}\label{OmgSmlInv}
\omega\big(\Ad_h(x),\Ad_h(y)\big)=\omega(x,y),\qquad\forall\,x,y\in\mathfrak{m},\,\forall\,h\in H.
\end{equation}
It is well known (cf. \cite{Chu,Bo}) that for a homogeneous $G$-space $G/H$, there is a one-to-one correspondence between $G$-invariant symplectic forms on $G/H$ and $\Ad(H)$-invariant $2$-cocycles of $\mathfrak{g}$ which have $\h$ as radical. Thus, if $G/H$ is a reductive homogeneous $G$-space with a fixed reductive decomposition $\mathfrak{g}=\mathfrak{h}\oplus\mathfrak{m}$, and if we denote by $x_\mathfrak{m}$ the projection of $x$ on $\mathfrak{m}$, then we have a one-to-one correspondence between $G$-invariant symplectic forms on $G/H$ and $\Ad(H)$-invariant symplectic tensors $\omega$ of $\mathfrak{m}$ which satisfy:
\begin{equation}\label{Cocy}
\omega\left([x,y]_\mathfrak{m},z\right)+\omega\left([z,x]_\mathfrak{m},y\right)+\omega\left([y,z]_\mathfrak{m},x\right)=0,\qquad\forall\,x,y,z\in\m.
\end{equation}
Since any Lie group $G$ can be considered as a reductive homogeneous $G$-space, where $H=\{e_G\}$ is the identity element of $G$ and $\mathfrak{m}=\mathfrak{g}$, one can easily see that there is a one-to-one correspondence between left-invariant symplectic forms on $G$ and $2$-cocycle symplectic tensors of $\mathfrak{g}$.\\

Hereafter, and unless the contrary is explecitly stated, $(G/H,\Omega)$ will be a symplectic reductive non-locally symmetric homogeneous $G$-space, $\mathfrak{g}=\mathfrak{h}\oplus\mathfrak{m}$ a fixed reductive decomposition, $\omega\in\bigwedge^2\m^*$ the symplectic tensor corresponding to $\Omega$, and we denote by $x_\mathfrak{m}$ (resp. $x_\h$) the projection of $x\in\g$ on $\mathfrak{m}$ (resp. on $\h$). Additionally, for any $x\in\mathfrak{m}$, we define a linear endomorphism $\lm_x:\mathfrak{m}\to\mathfrak{m}$ by $\lm_x(y):=[x,y]_\mathfrak{m}$, and we denote by $u^\mathfrak{m}$ the unique vector of $\m$ that satisfies $\omega\left(u^\mathfrak{m},x\right)=\tr\big(\el_x^{1,0})$ .\\

Now, since $G/H$ is a reductive homogeneous $G$-space, then (cf. \cite{Nom1}) there is a one-to-one correspondence between the set of $G$-invariant connections on $G/H$ and the set of products $\Ll:\mathfrak{m}\times\mathfrak{m}\rightarrow\mathfrak{m}$ on $\m$ which are $\Ad(H)$-invariant. In fact, if $\n$ is a $G$-invariant connection on $G/H$, then under the identification of $T_H(G/H)$ with $\mathfrak{m}$, its associated product $\Ll^\n:\mathfrak{m}\times\mathfrak{m}\rightarrow\mathfrak{m}$ is given by:
\begin{equation}
\Ll^\n_x(y)=\A^\n_{x^\#}y^\#_H,
\end{equation}    
for $x,y\in\mathfrak{m}$, where $x^\#,y^\#$ are the fundamental vector fields associated to $x,y$, and $\A^\n_{x^\#}:=\n_{x^\#}-\mathcal{L}_{x^\#}$ is the Nomizu's operator related to $\n$ and $x^\#$. It is obvious that the torsion $T^\n$ and the curvature $K^\n$ tensor fields of $\n$ are also $G$-invariant. Thus, they are completely determined by their value at the origin $H\in G/H$, and therefore they can be expressed as follows:
\begin{eqnarray}
T^\n(x,y)&=&\lN_x(y)-\lN_y(x)-[x,y]_\m;\label{Tor}\\
K^\n(x,y)&=&\left[\lN_x,\lN_y\right]-\lN_{[x,y]_\mathfrak{m}}-\ad_{[x,y]_\h}.\label{Curv}
\end{eqnarray}
Furthermore, if $\n$ is a $G$-invariant connection on $G/H$, then we can define a $(1,2)$-tensor field $\mathcal{N}^\n$ by the following:
\begin{equation}
\Omega\left(\mathcal{N}^\n(x^\#,y^\#),z^\#\right):=\left(\n_{x^\#}\Omega\right)(y^\#,z^\#),
\end{equation}
for $x,y,z\in\g$. It is clear that $\n$ preserves $\Omega$ if and only if $\mathcal{N}^\n=0$. On the other hand, since $\Omega$ and $\n$ are both $G$-invariant, it follows that $\mathcal{N}^\n$ is also $G$-invariant and hence it is completely determined by its value at $H\in G/H$. Thus, for $x,y,z\in\m$, $\mathcal{N}^\n$ is given by:
\begin{eqnarray}\label{NGinv}
\omega\left(\mathcal{N}^\n(x,y),z\right)&=&\Omega\left(\mathcal{N}^\n(x^\#,y^\#),z^\#\right)(H)\nonumber\\
&=&\left(\n_{x^\#}\Omega\right)(y^\#,z^\#)(H)\nonumber\\
&=&x^\#\Big(\Omega(y^\#,z^\#)\Big)(H)-\Omega\left(\n_{x^\#}y^\#,z^\#\right)(H)-\Omega\left(y^\#,\n_{x^\#}z^\#\right)(H)\nonumber\\
&=&-\Omega\left(\A^\n_{x^\#}y^\#,z^\#\right)(H)-\Omega\left(y^\#,\A^\n_{x^\#}z^\#\right)(H)\nonumber\\
&=&-\omega\left(\lN_x(y),z\right)-\omega\left(y,\lN_x(z)\right).
\end{eqnarray}
As a result, $\n$ preserves $\Omega$ if and only if $\lN_x:\m\to\m$ is skew-symmetric with respect to $\omega$, for any $x\in\m$.\\

According to the discussion above, the proof of Theorem \ref{thm1} is a direct consequence of the following proposition.

\begin{proposition}\label{PrpPrda}
For any $\ab\in\rel^2$, the product $\lab:\mathfrak{m}\times\mathfrak{m}\to\mathfrak{m}$ introduced in \eqref{na} satisfies the following properties:
\begin{itemize}
\item[$1.$] $\lab$ is $\Ad(H)$-invariant, i.e., $\Ad(H)\subseteq\Aut\left(\mathfrak{m},\lab\right)$;
\item[$2.$] $\lab_x(y)-\lab_y(x)=(2a+b)[x,y]_\mathfrak{m}$,\,  for $x,y\in\mathfrak{m}$;
\item[$3.$] $\omega\left(\mathcal{N}^{a,b}(x,y),z\right)=(b-a)\,\w(x,[y,z]_\m)$,\, for $x,y,z\in\m$.
\end{itemize}
\end{proposition}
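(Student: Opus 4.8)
The plan is to verify the three properties of Proposition \ref{PrpPrda} directly from the defining relation \eqref{na}, using the nondegeneracy of $\omega$ throughout to pass between identities on $\omega(\lab_x(y),z)$ and identities on the vector $\lab_x(y)$ itself.

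For part $1$, I would start from \eqref{na} applied to $\Ad_h(x),\Ad_h(y),\Ad_h(z)$ for $h\in H$. Since $\m$ is $\Ad(H)$-invariant, the Lie bracket satisfies $[\Ad_h(x),\Ad_h(y)]_\m=\Ad_h([x,y]_\m)$, and by \eqref{OmgSmlInv} the tensor $\omega$ is $\Ad(H)$-invariant. Substituting and peeling off one copy of $\Ad_h$ from each argument of $\omega$ yields $\omega\big(\Ad_h^{-1}\lab_{\Ad_h(x)}(\Ad_h(y)),z\big)=\omega(\lab_x(y),z)$ for all $z\in\m$; nondegeneracy of $\omega$ then forces $\lab_{\Ad_h(x)}(\Ad_h(y))=\Ad_h(\lab_x(y))$, which is the claim $\Ad(H)\subseteq\Aut(\m,\lab)$.

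For part $2$, write down \eqref{na} for $\omega(\lab_x(y),z)$ and for $\omega(\lab_y(x),z)$ and subtract. One gets
$$\omega\big(\lab_x(y)-\lab_y(x),z\big)=a\,\omega([x,y]_\m,z)+b\,\omega([x,z]_\m,y)-a\,\omega([y,x]_\m,z)-b\,\omega([y,z]_\m,x).$$
The two $a$-terms combine to $2a\,\omega([x,y]_\m,z)$. For the $b$-terms I would invoke the $2$-cocycle identity \eqref{Cocy}, which gives $\omega([x,z]_\m,y)-\omega([y,z]_\m,x)=\omega([x,z]_\m,y)+\omega([z,y]_\m,x)=-\omega([y,x]_\m,z)=\omega([x,y]_\m,z)$. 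Hence the right-hand side equals $(2a+b)\,\omega([x,y]_\m,z)$ for all $z$, and nondegeneracy gives $\lab_x(y)-\lab_y(x)=(2a+b)[x,y]_\m$.

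For part $3$, I would use the formula \eqref{NGinv} specialized to $\lN=\lab$, namely $\omega(\mathcal{N}^{a,b}(x,y),z)=-\omega(\lab_x(y),z)-\omega(y,\lab_x(z))$, and then expand each term by \eqref{na}. The first term is $-a\,\omega([x,y]_\m,z)-b\,\omega([x,z]_\m,y)$; for the second term note $\omega(y,\lab_x(z))=-\omega(\lab_x(z),y)=-a\,\omega([x,z]_\m,y)-b\,\omega([x,y]_\m,z)$, so $-\omega(y,\lab_x(z))=a\,\omega([x,z]_\m,y)+b\,\omega([x,y]_\m,z)$. Adding, the result is $(b-a)\,\omega([x,y]_\m,z)+(a-b)\,\omega([x,z]_\m,y)=(b-a)\big(\omega([x,y]_\m,z)-\omega([x,z]_\m,y)\big)$. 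Finally, rewriting $\omega([x,y]_\m,z)-\omega([x,z]_\m,y)=-\omega(z,[x,y]_\m)+\omega(y,[x,z]_\m)=\omega(x,[y,z]_\m)$, where the last equality is again the cocycle identity \eqref{Cocy} (cyclically permuted), gives $\omega(\mathcal{N}^{a,b}(x,y),z)=(b-a)\,\omega(x,[y,z]_\m)$, as desired.

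None of the steps is a genuine obstacle; the only point requiring care is the consistent and repeated use of the $2$-cocycle identity \eqref{Cocy} to collapse the mixed $\omega([\cdot,\cdot]_\m,\cdot)$ terms, and keeping track of the signs coming from the skew-symmetry of $\omega$ when moving $\lab_x$ across the bilinear form. Once parts $2$ and $3$ are in hand, Theorem \ref{thm1} follows immediately: part $2$ gives $T^{\nab}(x,y)=\lab_x(y)-\lab_y(x)-[x,y]_\m=(2a+b-1)[x,y]_\m$, which vanishes for all $x,y$ exactly when $2a+b=1$ (using $[\m,\m]\not\subseteq\h$), and part $3$ together with the remark after \eqref{NGinv} shows $\nab$ preserves $\Omega$ iff $\mathcal{N}^{a,b}=0$ iff $a=b$ (again using non-local-symmetry so that $[y,z]_\m$ is not identically zero); the connection is symplectic precisely when both conditions hold, i.e. $a=b=\tfrac13$.
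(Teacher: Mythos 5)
Your proof is correct and follows essentially the same route as the paper: direct expansion from the defining relation \eqref{na}, using the $\Ad(H)$-invariance of $\omega$ and of the bracket for part $1$, and the cocycle identity \eqref{Cocy} to collapse the mixed terms in parts $2$ and $3$, with nondegeneracy of $\omega$ to pass to vector identities. The closing deduction of Theorem \ref{thm1} from the proposition also matches the paper's intent.
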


\begin{proof}
Let $x,y,z\in\mathfrak{m}$, and $h\in H$. For the first assertion, we have:
\begin{eqnarray*}
\omega\left(\lab_{\Ad_h(x)}\left(\Ad_h(y)\right),z\right)&=&a\,\omega\big([\Ad_h(x),\Ad_h(y)]_\mathfrak{m},z\big)+b\,\omega\big([\Ad_h(x),z]_\mathfrak{m},\Ad_h(y)\big)\\
&=&a\,\omega\big(\Ad_h([x,y]_\mathfrak{m}),z\big)+b\,\omega\big(\Ad_h([x,\Ad_{h^{-1}}(z)]_\mathfrak{m}),\Ad_h(y)\big)\\
&\stackrel{\rm\eqref{OmgSmlInv}}{=}&a\,\omega\big([x,y]_\mathfrak{m},\Ad_{h^{-1}}(z)\big)+b\,\omega\big([x,\Ad_{h^{-1}}(z)]_\mathfrak{m}),y\big)\\
&=&\omega\big(\lab_x(y),\Ad_{h^{-1}}(z)\big)\\
&\stackrel{\rm\eqref{OmgSmlInv}}{=}&\omega\big(\Ad_{h}\left(\lab_x(y)\right),z\big).
\end{eqnarray*}
For the second assertion, we have:
\begin{eqnarray*}
\omega\big(\lab_x(y)-\lab_y(x),z\big)&=&a\,\omega\left([x,y]_\mathfrak{m},z\right)+b\,\omega\left([x,z]_\mathfrak{m},y\right)\\
&&-a\,\omega\left([y,x]_\mathfrak{m},z\right)-b\,\omega\left([y,z]_\mathfrak{m},x\right)\\
&=&2a\,\omega\left([x,y]_\mathfrak{m},z\right)+b\,\omega\left([x,z]_\mathfrak{m},y\right)+b\,\omega\left([z,y]_\mathfrak{m},x\right)\\
&\stackrel{\rm\eqref{Cocy}}{=}&(2a+b)\,\omega\left([x,y]_\mathfrak{m},z\right).
\end{eqnarray*}
For the last assertion, we have:
\begin{eqnarray*}
\omega\left(\mathcal{N}^{a,b}(x,y),z\right)&\stackrel{\rm\eqref{NGinv}}{=}&\omega\left(\lab_x(z),y\right)-\omega\left(\lab_x(y),z\right)\\
&=&a\,\omega\left([x,z]_\mathfrak{m},y\right)+b\,\omega\left([x,y]_\mathfrak{m},z\right)\\
&&-a\,\omega\left([x,y]_\mathfrak{m},z\right)-b\,\omega\left([x,z]_\mathfrak{m},y\right)\\
&=&(b-a)\,\w([x,y]_\m,z)+(b-a)\,\w([z,x]_\m,y)\\
&\stackrel{\rm\eqref{Cocy}}{=}&(b-a)\,\w(x,[y,z]_\m).
\end{eqnarray*}
This finishes the proof.
\end{proof}

Note that for certain specific values of $\ab$, we recover some well-known $G$-invariant connections. For example, when $\ab=(\tfrac{1}{2},0)$ $\big($resp. $\ab=(0,0)\big)$, the corresponding $G$-invariant connection is the canonical connection of the first (resp. second) kind introduced in \cite{Nom1}.\\

A symplectic reductive homogeneous $G$-space $(G/H,\Omega)$ is called a \textit{symplectic solvmanifold} if $G$ is a connected solvable Lie group and $H$ is a discrete subgroup of $G$. In such a case, we denote by $\mathcal{R}$ (resp. $\Gamma$) instead of $G$ (resp. $H$) the connected solvable Lie group (resp. the discrete subgroup). If $(\mathcal{R}/\Gamma,\Omega)$ is a symplectic solvmanifold, since $\Gamma$ is a discrete subgroup, $\mathfrak{m}$ is the solvable Lie algebra $\mathfrak{r}$ of $\mathcal{R}$ and $\Omega$ induces a $\Gamma$-invariant symplectic form $\omega$ on it. Further, the product $\la$ defined by \eqref{na} for $a=0$ and $b=1$ is the canonical left symmetric product on $(\mathfrak{r},\omega)$ given by:	\begin{equation}\label{LiGrpLS}
\omega\left(\LS^{0,1}_x(y),z\right)=-\omega\left(y,[x,z]\right).\end{equation}
If moreover $\mathcal{R}$ is nilpotent, then $(\mathcal{R}/\Gamma,\Omega)$ is called a \textit{symplectic nilmanifold} and in this case $\mathcal{R}$ will be denoted by $N$, and $\mathfrak{r}$ by $\nlp$. Note that, since $\Omega$ is invariant, it follows from \cite[p. $340$]{Hu} that the torus is the only compact symplectic solvmanifold.
We end this section by providing an example of a symplectic solvmanifold.
\begin{example}
Consider the following simply connected Lie group
\begin{equation*}
\widetilde{\mathcal{R}}:=\Bigg\{\begin{psmallmatrix}
1&a&b\\0&1&c\\0&0&e^t\\
\end{psmallmatrix}\,\Big|\,\,(a,b,c,t)\in\rel^4\Bigg\}.
\end{equation*}
Let $\Gamma$ be the Lie subgroup of $\widetilde{\mathcal{R}}$ defined by:
\begin{equation*}
\Gamma:=\Bigg\{\begin{psmallmatrix}
1&n_1&n_2\\
0&1&n_3\\
0&0&1\\
\end{psmallmatrix}\,\Big|\,\,(n_1,n_2,n_3)\in\intg^3\Bigg\}.
\end{equation*}
Clearly that $\Gamma$ is a discrete Lie subgroup of $\widetilde{\mathcal{R}}$. Moreover, one can easily see that the Lie algebra $\mathfrak{r}$ of $\widetilde{\mathcal{R}}$ is:
\begin{equation*}
\mathfrak{r}=\Bigg\{\begin{psmallmatrix}
0&x&y\\0&0&z\\0&0&t\\
\end{psmallmatrix}\,\Big|\,\,(x,y,z,t)\in\rel^4\Bigg\}.
\end{equation*}
Choose the following basis of $\mathfrak{r}$:
\begin{equation*}
e_1:=\begin{psmallmatrix}
0&0&0\\0&0&-1\\0&0&0\\
\end{psmallmatrix},\quad e_2:=\begin{psmallmatrix}
0&1&0\\0&0&0\\0&0&0\\
\end{psmallmatrix},\quad e_3:=\begin{psmallmatrix}
0&0&1\\0&0&0\\0&0&0\\
\end{psmallmatrix},\et e_4:=\begin{psmallmatrix}
0&0&0\\0&0&0\\0&0&-1\\\end{psmallmatrix}.
\end{equation*}
The only nonzero Lie brackets are:
\begin{equation*}
[e_4,e_3]=[e_1,e_2]=e_3,\et [e_4,e_1]=e_1.
\end{equation*}
Hence, $\mathfrak{r}$ is $2$-step solvable. Moreover, define a symplectic tensor on $\mathfrak{r}$ by:
\begin{equation*}
\w:=e^*_1\wedge e^*_2-e^*_3\wedge e^*_4.
\end{equation*}
A direct computation shows that $\w$ is a $2$-cocycle of $\mathfrak{r}$. Further, for each $\gamma:=\begin{psmallmatrix}
1&n_1&n_2\\0&1&n_3\\0&0&1\\
\end{psmallmatrix}\in\Gamma$, we have:
\begin{eqnarray*}
&&\Ad_\gamma(e_1)=e_1-n_1e_3,\qquad\Ad_\gamma(e_2)=e_2-n_3e_3,\\
&&\Ad_\gamma(e_3)=e_3,\quad\et\,\,\Ad_\gamma(e_4)=e_4+n_3e_1-n_2e_3.
\end{eqnarray*}
It follows that $\w$ is $\Gamma$-invariant, and therefore it gives rise to a unique $\widetilde{\mathcal{R}}$-invariant symplectic form $\Omega$ on the solvmanifold $\widetilde{\mathcal{R}}/\Gamma\cong\mathbb{T}^3\times\rel$. In addition, the canonical left symmetric product on $\mathfrak{r}$ associated to $\omega$ defines the Zero-One connection $\nc$ on $\widetilde{\mathcal{R}}/\Gamma$.
\end{example}

\section{The flatness of the Zero-One connection $\nc$}
The purpose of this section is to investigate the conditions under which the Zero-One connection is flat, followed by a proof of our second main result.

We start by computing the curvature tensor of the Zero-One connection. Let $\ra:\mathfrak{m}\times\m\to\mathfrak{m}$ be the bilinear map defined by $\ra_x(y):=\la_y(x)$, and for each $x,y\in\mathfrak{m}$, denote by $D_{x,y}$ the following linear endomorphism
\begin{equation}\label{DxyDef}
D_{x,y}:\mathfrak{m}\to\mathfrak{m},\quad\textnormal{written}\quad z\mapsto[[x,z]_\h,y].
\end{equation}
Further, for any linear endomorphism $F\in\End(\mathfrak{m})$, denote by $F^\divideontimes\in\End(\mathfrak{m})$ the adjoint of $F$ with respect to $\omega$, i.e., $
\omega(F^\divideontimes(x),y)=\omega(x,F(y))$ for any $x,y\in\m$. To compute the curvature tensor of the Zero-One connection $\nc$, we need the following useful lemma. 

\begin{lemma}\label{LemLY3} For any $x,y,z,z'\in\mathfrak{m}$, we have:
\begin{itemize}
\item[$1.$] $D_{x,y}(z)=-D_{z,y}(x)$;
\item[$2.$] $\omega\left(D_{x,y}(z),z'\right)=\omega\left(D_{x,z'}(z),y\right)$;
\item[$3.$] $\displaystyle\sum_{\circlearrowright(x,y,z)} D_{x,z}(y)=\displaystyle\sum_{\circlearrowright(x,y,z)}\lm_x\ro\lm_y(z)$;
\item[$4.$] $\tr\left(D_{x,y}\right)-\tr\left(D_{y,x}\right)=\omega\left(u^\mathfrak{m},\lm_x(y)\right)$.
\end{itemize}
\end{lemma}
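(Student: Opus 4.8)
The plan is to handle the four items in order: items 1--3 are short formal manipulations, and item 4 carries the actual content. For item 1 I would simply use antisymmetry of the bracket and linearity of the projection onto $\h$: $D_{x,y}(z)=[[x,z]_\h,y]=-[[z,x]_\h,y]=-D_{z,y}(x)$. For item 2 the single ingredient is that, for every $\xi\in\h$, the operator $\ad_\xi$ preserves $\m$ (because $\Ad(H)\m=\m$ forces $[\h,\m]\subseteq\m$) and is $\w$-skew-symmetric on $\m$; this last point comes from differentiating the $\Ad(H)$-invariance \eqref{OmgSmlInv} of $\w$ at the identity in the direction $\xi$, which gives $\w([\xi,u],v)+\w(u,[\xi,v])=0$ for all $u,v\in\m$. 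Applying this with $\xi=[x,z]_\h$ yields $\w(D_{x,y}(z),z')=\w([[x,z]_\h,y],z')=-\w(y,[[x,z]_\h,z'])=\w(D_{x,z'}(z),y)$.

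For item 3 I would take the Jacobi identity of $\g$ evaluated on $x,y,z$ and keep its $\m$-component. Writing $[y,z]=[y,z]_\h+[y,z]_\m$ and using $[x,[y,z]_\h]\in\m$, the $\m$-part of $[x,[y,z]]$ is $[x,[y,z]_\h]+\lm_x\ro\lm_y(z)$, so the $\m$-component of Jacobi reads $\sum_{\circlearrowright(x,y,z)}\big([x,[y,z]_\h]+\lm_x\ro\lm_y(z)\big)=0$. Hence $\sum_{\circlearrowright(x,y,z)}\lm_x\ro\lm_y(z)=\sum_{\circlearrowright(x,y,z)}[[y,z]_\h,x]$, and a relabelling of the cyclic sum identifies the right-hand side with $\sum_{\circlearrowright(x,y,z)}D_{x,z}(y)$, which is item 3.

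Item 4 is the crux. I would write, for $x\in\m$, the block form of $\ad_x$ relative to $\g=\h\oplus\m$; since $[\m,\h]\subseteq\m$ the $\h\to\h$ block vanishes, so $\ad_x=\begin{psmallmatrix}0&a_x\\ b_x&c_x\end{psmallmatrix}$ with $a_x\colon\m\to\h,\ z\mapsto[x,z]_\h$, $b_x\colon\h\to\m,\ \xi\mapsto[x,\xi]$, and $c_x=\lm_x\colon\m\to\m$. Then $D_{x,y}=-\,b_y\ro a_x$, whence $\tr D_{x,y}-\tr D_{y,x}=\tr(b_xa_y)-\tr(b_ya_x)$. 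Next I would compute the $\m\to\m$ block on both sides of $[\ad_x,\ad_y]=\ad_{[x,y]}$: the left side gives $b_xa_y-b_ya_x+c_xc_y-c_yc_x$, and the right side, with $[x,y]=[x,y]_\h+[x,y]_\m$, $\ad_{[x,y]_\h}$ block-diagonal, and $[x,y]_\m\in\m$, gives $\ad_{[x,y]_\h}|_\m+\lm_{[x,y]_\m}$. Taking traces, $\tr(c_xc_y)=\tr(c_yc_x)$ and $\tr\big(\ad_{[x,y]_\h}|_\m\big)=0$ (because by the remark used for item 2 this operator is $\w$-skew, hence lies in $\mathfrak{sp}(\m,\w)$ and is traceless), so $\tr D_{x,y}-\tr D_{y,x}=\tr\lm_{[x,y]_\m}$. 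Finally, the defining relation $\w(u^\m,v)=\tr\lm_v$ turns this into $\tr\lm_{[x,y]_\m}=\w(u^\m,[x,y]_\m)=\w(u^\m,\lm_x(y))$, which is item 4.

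The only genuine obstacle is item 4, and within it the two cancellations that make the trace identity close: the symmetric term $\tr(c_xc_y)-\tr(c_yc_x)$ vanishes automatically, and the $\h$-valued part $[x,y]_\h$ contributes nothing to the trace because it acts symplectically — hence tracelessly — on $\m$.
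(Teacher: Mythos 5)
Your proof is correct and follows essentially the same route as the paper: items 1--3 are handled by the same observations (antisymmetry, $\w$-skewness of $\ad_\xi|_\m$ for $\xi\in\h$, and the $\m$-component of the Jacobi identity), and for item 4 your block computation of $[\ad_x,\ad_y]=\ad_{[x,y]}$ produces exactly the operator identity $D_{x,y}-D_{y,x}=\lm_{[x,y]_\m}+[\lm_y,\lm_x]+\ad_{[x,y]_\h}$ that the paper extracts from item 3, followed by the same two trace cancellations. The only difference is presentational (block matrices versus citing item 3), and you usefully make explicit why $\tr\big(\ad_{[x,y]_\h}|_\m\big)=0$, which the paper leaves unjustified.
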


\begin{proof}
The first one is clear, the second one follows from the $\Ad(H)$-invariance of $\omega$, and the third one follows from Jacobi identity. The last one follows from the third property, i.e., for any $x,y\in\mathfrak{m}$, we have:
\begin{equation*}
D_{x,y}-D_{y,x}=\lm_{[x,y]_\m}+\left[\lm_y,\lm_x\right]+\ad_{[x,y]_\h},
\end{equation*}
and the fact that $\tr\left(\ad_{[x,y]_\h}\right)=0$.
\end{proof}

Now, we are able to give a formula for the curvature tensor of $\nc$.

\begin{proposition}
The curvature of the Zero-One connection $\nc$ is given by
\begin{equation}\label{Ka}
\Kc(x,y)=D_{x,y}^\aj-D_{y,x}^\aj\,.
\end{equation}
\end{proposition}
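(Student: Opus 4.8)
The plan is to compute the curvature tensor $\Kc(x,y)$ directly from the general formula \eqref{Curv}, specializing the Nomizu operator to $\lN = \la$, and then to rewrite everything in terms of the $\omega$-adjoints of the endomorphisms $D_{x,y}$. The starting point is \eqref{Curv}, which gives
$\Kc(x,y) = [\la_x,\la_y] - \la_{[x,y]_\m} - \ad_{[x,y]_\h}$.
Since the product $\la$ is defined implicitly through $\omega$ by \eqref{LiGrpLS}, namely $\omega(\la_x(y),z) = -\omega(y,[x,z]) = -\omega(y,\lm_x(z))$ (using that $[x,z]_\h$ acts trivially under $\omega$ when paired inside $\m$, or more precisely recalling the cocycle manipulation that produced \eqref{LiGrpLS}), the natural move is to take $\omega(\Kc(x,y)(z),z')$ and push all the $\la$'s onto the other argument, converting each $\la_x$ into $-\lm_x^\aj$ up to the subtlety that $[x,\cdot]$ here means the full bracket in $\g$, not just $\lm_x$. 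Concretely, $\omega(\la_x(w),z') = -\omega(w,[x,z'])$, and one should keep the bracket $[x,z']$ intact (its $\h$-part contributes via the $\Ad(H)$-invariance of $\omega$, exactly as in Lemma \ref{LemLY3}(2)).

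First I would handle the term $\omega([\la_x,\la_y](z),z')$. Writing $\omega(\la_x\la_y(z),z') = -\omega(\la_y(z),[x,z']) = \omega(z,[y,[x,z']])$, and similarly for the other order, so that $\omega([\la_x,\la_y](z),z') = \omega\big(z,[y,[x,z']] - [x,[y,z']]\big)$. Next, $\omega(\la_{[x,y]_\m}(z),z') = -\omega(z,[[x,y]_\m,z'])$. Finally $\omega(\ad_{[x,y]_\h}(z),z') = \omega([[x,y]_\h,z],z') = -\omega(z,[[x,y]_\h,z'])$ by $\Ad(H)$-invariance (infinitesimal form). Adding the three contributions and applying the Jacobi identity in $\g$ to $[y,[x,z']] - [x,[y,z']] = -[[x,y],z'] = -[[x,y]_\m,z'] - [[x,y]_\h,z']$, the $\la_{[x,y]_\m}$ and $\ad_{[x,y]_\h}$ terms cancel exactly against the Jacobi output, and one is left with a purely "$D$"-type expression. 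The remaining step is the bookkeeping: identify what survives as $\omega(z,\,\cdot\,)$ and recognize it, via $D_{x,y}(z) = [[x,z]_\h,y]$ together with Lemma \ref{LemLY3}(1)--(2), as $\omega\big((D_{x,y}^\aj - D_{y,x}^\aj)(z),z'\big)$; then nondegeneracy of $\omega$ yields \eqref{Ka}.

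The main obstacle I expect is the careful tracking of $\h$-components versus $\m$-components when iterating brackets: in $[y,[x,z']]$ the inner bracket $[x,z']$ has both an $\m$-part and an $\h$-part, and $\la_y$ only sees a piece of it in the way \eqref{LiGrpLS} was derived, so one must be scrupulous about whether "$[x,z]$" in \eqref{LiGrpLS} is the full $\g$-bracket or $\lm_x(z)$ — the displayed formula \eqref{LiGrpLS} is written with the full bracket, and that is what makes the $\h$-parts reorganize into the $D_{x,y}$ terms rather than being lost. A clean way to avoid errors is to do the whole computation at the level of $\omega(\cdot,\cdot)$, never passing to the product $\la$ explicitly except through \eqref{LiGrpLS}, use the Jacobi identity once on the double bracket, and then invoke Lemma \ref{LemLY3}(1)--(2) to fold the result into the stated adjoint form. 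The antisymmetry $D_{x,y}(z) = -D_{z,y}(x)$ from Lemma \ref{LemLY3}(1) is what guarantees the answer depends on $(x,y)$ in the antisymmetric combination $D_{x,y}^\aj - D_{y,x}^\aj$ claimed in \eqref{Ka}.
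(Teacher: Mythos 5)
Your overall strategy (dualize through $\omega$, apply the Jacobi identity, recognize the leftover as $D$-terms) is the same as the paper's, but the key intermediate identity you display is wrong, and the error sits exactly where the content of the proposition lies. The product $\la$ is defined by \eqref{na} with $(a,b)=(0,1)$, i.e. $\omega(\la_x(w),z')=\omega([x,z']_\m,w)=-\omega(w,[x,z']_\m)$: the bracket occurring here is the \emph{projected} bracket $\lm_x(z')$, and the $\h$-part of $[x,z']$ is simply annihilated by $\omega$ at this stage — it does not ``contribute via $\Ad(H)$-invariance'' in the defining formula. Iterating correctly gives
\begin{equation*}
\omega\big(\la_x\ro\la_y(z),z'\big)=\omega\big(z,[y,[x,z']_\m]_\m\big)=\omega\big(z,\lm_y\ro\lm_x(z')\big),
\end{equation*}
whereas you wrote $\omega(z,[y,[x,z']])$ with the full nested bracket. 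The two differ by $\omega(z,[y,[x,z']_\h])=-\omega(z,D_{x,y}(z'))$, which is precisely the quantity the proposition is about. If one follows your plan literally — full nested brackets, then Jacobi $[y,[x,z']]-[x,[y,z']]=-[[x,y],z']$, then cancellation against the $\la_{[x,y]_\m}$ and $\ad_{[x,y]_\h}$ terms — the cancellation is \emph{complete} and one obtains $\Kc=0$, not ``a purely $D$-type expression''; nothing survives to produce the right-hand side of \eqref{Ka}.

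The missing ingredient is the operator form of Lemma \ref{LemLY3}(3),
\begin{equation*}
[\lm_y,\lm_x]=-\lm_{[x,y]_\m}-\ad_{[x,y]_\h}+D_{x,y}-D_{y,x},
\end{equation*}
which measures the failure of the Jacobi identity for the \emph{projected} brackets: after absorbing $-\lm_{[x,y]_\m}$ and $-\ad_{[x,y]_\h}$ into the $\la_{[x,y]_\m}$ and $\ad_{[x,y]_\h}$ terms of \eqref{Curv} (the latter via the infinitesimal $\Ad(H)$-invariance of $\omega$), what survives is exactly $D_{x,y}-D_{y,x}$ sitting in the second slot of $\omega$, which dualizes to $D_{x,y}^\aj-D_{y,x}^\aj$. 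So the conclusion \eqref{Ka} is correct and your route is essentially the paper's, but as written your computation loses the curvature entirely: you must track the $\h$-components generated at the \emph{inner} level of the double bracket, not at the level of the defining formula for $\la$.
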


\begin{proof}
Recall that, for $x,y,z\in\mathfrak{m}$, the curvature of $\nc$ is given by $\eqref{Curv}$, i.e.,
\begin{equation*}
K^{0,1}(x,y)z=\left[\la_x,\la_y\right](z)-\la_{[x,y]_\mathfrak{m}}(z)-D_{x,z}(y).
\end{equation*}
So, for $z'\in\mathfrak{m}$, we have:
\begin{eqnarray*}
\omega\left(\left[\la_x,\la_y\right](z),z'\right)&=&-\omega\left(\la_y(z),[x,z']_\mathfrak{m}\right)+\omega\left(\la_x(z),[y,z']_\mathfrak{m}\right)\\
&=&\omega\left(z,[y,[x,z']_\mathfrak{m}]_\mathfrak{m}-[x,[y,z']_\mathfrak{m}]_\mathfrak{m}\right).
\end{eqnarray*}
On the other hand, by Lemma \ref{LemLY3}, we have:
\begin{eqnarray*}
\left[\lm_y,\lm_x\right](z')&=&-\lm_{[x,y]_\m}(z')-D_{x,z'}(y)+D_{x,y}(z')-D_{y,x}(z').
\end{eqnarray*}
Thus,
\begin{eqnarray*}
\omega\left(\left[\la_x,\la_y\right](z),z'\right)&=&-\omega\Big(z,\lm_{[x,y]_\m}(z')\Big)+
\omega\Big(D_{x,z'}(y),z\Big)\\
&&+\omega\Big(z,D_{x,y}(z')-D_{y,x}(z')\Big)\\
&=&\omega\Big(\la_{[x,y]_\mathfrak{m}}(z),z'\Big)+\omega\Big(D_{x,z}(y),z'\Big)\\
&&+\omega\Big(z,D_{x,y}(z')-D_{y,x}(z')\Big).
\end{eqnarray*}
This completes the proof.
\end{proof}

By the previous proposition, it is obvious that $\nc$ is flat if and only if $D_{x,y}=D_{y,x}$, for all $x,y\in\mathfrak{m}$. The following proposition gives a necessary and sufficient condition for the Zero-One connection $\nc$ to be flat.

\begin{proposition}\label{NaFlt}
The Zero-One connection $\nc$ is flat if and only if $$[[\mathfrak{m},\mathfrak{m}]_\h,\mathfrak{m}]=\{0\}.$$
In particular, if $\nc$ is flat, then $(\mathfrak{m},[\,.\,,.\,]_\mathfrak{m})$ is a Lie algebra. 
\end{proposition}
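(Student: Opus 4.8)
By the proposition immediately preceding this statement, the curvature of $\nc$ is $\Kc(x,y)=D_{x,y}^{\aj}-D_{y,x}^{\aj}$; since $\w$ is nondegenerate, taking $\w$-adjoints shows that $\nc$ is flat if and only if $D_{x,y}=D_{y,x}$ for all $x,y\in\m$. The whole argument thus takes place inside the endomorphisms $D_{x,y}(z)=[[x,z]_\h,y]$ of \eqref{DxyDef}, and amounts to showing that $D_{x,y}=D_{y,x}$ for all $x,y$ is equivalent to $[[\m,\m]_\h,\m]=\{0\}$. The ``if'' direction is immediate: if $[[\m,\m]_\h,\m]=\{0\}$, then $D_{x,y}(z)=[[x,z]_\h,y]\in[[\m,\m]_\h,\m]=\{0\}$ for all $x,y,z$, so every $D_{x,y}$ vanishes and in particular $D_{x,y}=D_{y,x}$.

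For the converse I would assume $D_{x,y}=D_{y,x}$ for all $x,y$ and feed this, alternately with the skew relation $D_{x,y}(z)=-D_{z,y}(x)$ of Lemma \ref{LemLY3}(1), into the chain
\begin{equation*}
D_{x,y}(z)=-D_{z,y}(x)=-D_{y,z}(x)=D_{x,z}(y)=D_{z,x}(y)=-D_{y,x}(z)=-D_{x,y}(z),
\end{equation*}
whose six equalities use, in order, Lemma \ref{LemLY3}(1), flatness, Lemma \ref{LemLY3}(1), flatness, Lemma \ref{LemLY3}(1), flatness. Since we are over $\rel$, this gives $D_{x,y}(z)=0$, i.e. $[[x,z]_\h,y]=0$ for all $x,y,z\in\m$; as the vectors $[x,z]_\h$ span $[\m,\m]_\h$, this yields $[[\m,\m]_\h,\m]=\{0\}$. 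Conceptually, the vanishing is forced because the trilinear map $(x,y,z)\mapsto D_{x,y}(z)$ would otherwise transform under the $S_3$-action on its three slots through a one-dimensional character sending the transposition $(1\,2)$ to $+1$ and the conjugate transposition $(1\,3)$ to $-1$, which is impossible unless the map is identically zero.

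For the last assertion, assume $\nc$ is flat, so that $[[x,y]_\h,z]=0$ for all $x,y,z\in\m$ by the above. The bracket $[\cdot,\cdot]_\m$ on $\m$ is visibly bilinear and skew-symmetric, so only the Jacobi identity needs checking. Writing $[x,y]=[x,y]_\m+[x,y]_\h$ in the Jacobi identity $\sum_{\circlearrowright(x,y,z)}[[x,y],z]=0$ of $\g$, the summands $[[x,y]_\h,z]$ all vanish, leaving $\sum_{\circlearrowright(x,y,z)}[[x,y]_\m,z]=0$ in $\g$; projecting this identity onto $\m$ gives exactly $\sum_{\circlearrowright(x,y,z)}[[x,y]_\m,z]_\m=0$, which is the Jacobi identity for $(\m,[\cdot,\cdot]_\m)$. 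Hence $(\m,[\cdot,\cdot]_\m)$ is a Lie algebra.

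The only step requiring genuine thought is locating the alternating chain of the second paragraph; once it is written down, everything else is routine bookkeeping with the reductive splitting $\g=\h\oplus\m$.
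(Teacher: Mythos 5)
Your proposal is correct and follows essentially the same route as the paper: reduce flatness to the symmetry $D_{x,y}=D_{y,x}$ via the curvature formula \eqref{Ka}, then alternate this symmetry with the skew relation of Lemma \ref{LemLY3}(1) to force $D_{x,y}(z)=-D_{x,y}(z)=0$. Your derivation of the Jacobi identity for $[\cdot,\cdot]_\m$ by expanding the Jacobi identity of $\g$ is just the unpacked form of the paper's appeal to Lemma \ref{LemLY3}(3), so there is no substantive difference.
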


\begin{proof}
Only the "only if" part that needs to be checked. Suppose that $\nc$ is flat, then for each $x,y,z\in\mathfrak{m}$, one has $D_{x,y}(z)=D_{y,x}(z)$. Thus, by using Lemma \ref{LemLY3}, we get:
\begin{equation*}
D_{x,y}(z)=-D_{z,y}(x)=-D_{y,z}(x)=D_{x,z}(y).
\end{equation*}
Hence,
\begin{eqnarray*}
2D_{x,y}(z)&=&D_{x,y}(z)+D_{x,z}(y)\\
&=&D_{x,y}(z)-D_{y,z}(x)\\
&=&D_{x,y}(z)-D_{z,y}(x)\\
&=&D_{x,y}(z)-D_{z,x}(y)\\
&=&D_{x,y}(z)-D_{x,z}(y)\\
&=&D_{x,y}(z)-D_{x,y}(z)\\
&=&0.
\end{eqnarray*}
The last assertion follows from the third equality of Lemma \ref{LemLY3}.
\end{proof}

\begin{example}
On a symplectic solvmanifold $(\mathcal{R}/\Gamma,\Omega)$, the Zero-One connection $\nc$ is flat.
\end{example}

The natural action of $G$ on $G/H$ is said to be \textit{almost effective} if the largest normal subgroup of $G$ contained in $H$ is discrete. It is well known that the natural action of $G$ on $G/H$ is almost effective if and only if the isotropy representation $\ad^\m:\h\to\End(\mathfrak{m})$ is faithful. The following theorem is a direct consequence of Proposition \ref{NaFlt}. 

\begin{theorem}\label{ncFlt}
Let $(G/H,\Omega)$ be a symplectic reductive homogeneous $G$-space, $\g=\h\oplus\m$ a fixed reductive decomposition, and assume that $G$ acts almost effectively on $G/H$. Then, the Zero-One connection $\nc$ is flat if and only if $\mathfrak{m}$ is an ideal of $\mathfrak{g}$. 
\end{theorem}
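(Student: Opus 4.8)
The plan is to read off the statement directly from Proposition \ref{NaFlt}, which already equates flatness of $\nc$ with the purely algebraic condition $[[\m,\m]_\h,\m]=\{0\}$. It then remains to see that, under the standing hypotheses, this condition is equivalent to $\m$ being an ideal of $\g$. The two ingredients I would use are: (i) reductivity of the decomposition $\g=\h\oplus\m$, which gives $[\h,\m]\subseteq\m$; and (ii) almost effectiveness, which by the remark preceding the theorem is equivalent to faithfulness of the isotropy representation $\ad^\m:\h\to\End(\m)$.

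For the ``if'' direction I would argue as follows. If $\m$ is an ideal of $\g$, then in particular $[\m,\m]\subseteq\m$, so $[\m,\m]_\h=\{0\}$, and hence trivially $[[\m,\m]_\h,\m]=\{0\}$; Proposition \ref{NaFlt} then yields that $\nc$ is flat. (This implication does not even require almost effectiveness.)

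For the ``only if'' direction, assume $\nc$ is flat. By Proposition \ref{NaFlt} one has $[[x,z]_\h,y]=0$ for all $x,y,z\in\m$. Fix $x,z\in\m$ and set $w:=[x,z]_\h\in\h$; the previous identity says $[w,y]=0$ for every $y\in\m$. Since $\g=\h\oplus\m$ is reductive, $[\h,\m]\subseteq\m$, so $[w,y]=[w,y]_\m=\ad^\m(w)(y)$ for all $y\in\m$, and therefore $\ad^\m(w)=0$. As $G$ acts almost effectively, $\ad^\m$ is faithful, forcing $w=0$; that is, $[x,z]_\h=0$ for all $x,z\in\m$, i.e.\ $[\m,\m]\subseteq\m$. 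Combining this with $[\h,\m]\subseteq\m$ gives $[\g,\m]=[\h,\m]+[\m,\m]\subseteq\m$, so $\m$ is an ideal of $\g$, as claimed.

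I do not expect any genuine obstacle here: the content has been front-loaded into Proposition \ref{NaFlt} and into the characterisation of almost effectiveness via faithfulness of $\ad^\m$. The only point deserving explicit mention is that reductivity supplies $[\h,\m]\subseteq\m$, which is precisely what allows the bracket $[w,\cdot]$ on $\m$ to be identified with $\ad^\m(w)$ and hence faithfulness to be invoked.
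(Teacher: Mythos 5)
Your proof is correct and follows exactly the route the paper intends: the paper states the theorem as a ``direct consequence'' of Proposition \ref{NaFlt} together with the preceding remark that almost effectiveness is equivalent to faithfulness of $\ad^\m:\h\to\End(\m)$, and your argument simply spells out the details (using reductivity to identify $[w,\cdot]|_\m$ with $\ad^\m(w)$ and conclude $[\m,\m]_\h=\{0\}$). Nothing is missing.
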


Now, before proving our second main theorem, let us compute the Ricci curvature of the Zero-One connection. We start by the following useful lemma.

\begin{lemma}\label{tL,tR}
For any $x,y,z\in\mathfrak{m}$, the following equalities hold:
\begin{itemize}
\item[$1.$] $\omega\left(\ra_x(y),z\right)=\omega\left(y,\ra_x(z)\right)$;
\item[$2.$] $\tr(\la_x)=\tfrac{1}{2}\tr(\ra_x)=\omega\left(x,u^\mathfrak{m}\right)$;
\item[$3.$] $\tr(\la_x\ro\la_y)=\tr\left(\lm_x\ro\lm_y\right)$;
\item[$4.$] $\tr(\ra_x\ro\ra_y)=2\tr(\ra_x\ro\la_y)=2\tr\left(\lm_x\ro\lm_y\right)+2\tr\left(\lm_x\ro\left(\lm_y\right)^\aj\right)$.
\end{itemize}
\end{lemma}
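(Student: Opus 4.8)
The plan is to verify the four identities by unwinding the defining relation \eqref{LiGrpLS} for $\la$ (equivalently \eqref{na} with $a=0$, $b=1$), together with the $2$-cocycle condition \eqref{Cocy} and the skew-symmetry of $\w$, reducing everything to statements about the endomorphism $\lm_x$ and its $\w$-adjoint $\left(\lm_x\right)^\aj$. First I would record the basic formula $\w\left(\la_x(y),z\right)=-\w\left(y,[x,z]_\m\right)=-\w\left(y,\lm_x(z)\right)$, which says precisely $\la_x=-\left(\lm_x\right)^\aj$. For assertion $1$, since $\ra_x(y)=\la_y(x)$, I compute $\w\left(\ra_x(y),z\right)=\w\left(\la_y(x),z\right)=-\w\left(x,[y,z]_\m\right)$, and this expression is symmetric in $y$ and $z$ up to the cocycle identity: indeed $-\w\left(x,[y,z]_\m\right)=\w\left([y,z]_\m,x\right)$, and \eqref{Cocy} lets me rewrite $\w\left([y,z]_\m,x\right)=-\w\left([x,y]_\m,z\right)-\w\left([z,x]_\m,y\right)$; comparing with the analogous expansion of $\w\left(y,\ra_x(z)\right)=-\w\left(z,[y,x]_\m\right)=\w\left(z,[x,y]_\m\right)=-\w\left([x,y]_\m,z\right)$ shows the two sides agree — or, more cleanly, both equal $-\w\left([x,y]_\m,z\right)-\w\left([z,x]_\m,y\right)$ after one application of \eqref{Cocy}, giving assertion $1$; I would present whichever chain is shortest.

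For assertion $2$: $\tr(\la_x)=\tr\left(-\left(\lm_x\right)^\aj\right)=-\tr(\lm_x)$. On the other hand $\tr(\lm_x)=\tr\left(\el^{1,0}_x\right)=\w\left(u^\m,x\right)$ by the definition of $u^\m$, so $\tr(\la_x)=-\w\left(u^\m,x\right)=\w\left(x,u^\m\right)$. For $\tfrac12\tr(\ra_x)$ I use assertion $2$ of Proposition \ref{PrpPrda} in the present case: $\la_x(y)-\la_y(x)=(2\cdot 0+1)[x,y]_\m=[x,y]_\m$, i.e.\ $\ra_y(x)=\la_y(x)=\la_x(y)-[x,y]_\m$, so as endomorphisms in the variable being traced one gets $\ra_x=\la_x-\lm_x$ (carefully: fixing $x$, the map $y\mapsto\ra_x(y)=\la_y(x)$ equals $y\mapsto\la_x(y)-[x,y]_\m$? — here I must double-check the slot conventions, since $\la_y(x)$ has $y$ in the lower slot; I expect instead $\ra_x=\la_x+\left(\lm_x\right)^\aj$-type relation via assertion $1$). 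The safe route is: $\tr(\ra_x)=\sum_i\w\left(\ra_x(e_i),e^i\right)=\sum_i\left(-\w\left(x,[e_i,e^i]_\m\right)\right)$ for a symplectic basis, which I rewrite using \eqref{Cocy} into $2\sum_i\w\left([x,e_i]_\m,e^i\right)=2\tr(\lm_x)$... but this has the wrong sign against assertion $2$, so the constant is fixed by being careful with \eqref{Cocy} — this bookkeeping is the one genuinely fiddly point and I would do it explicitly with a symplectic basis rather than by manipulating trace identities abstractly.

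For assertion $3$: $\tr(\la_x\ro\la_y)=\tr\left(\left(\lm_x\right)^\aj\left(\lm_y\right)^\aj\right)=\tr\left(\left(\lm_y\lm_x\right)^\aj\right)=\tr\left(\lm_y\lm_x\right)=\tr\left(\lm_x\lm_y\right)$, using that $(\cdot)^\aj$ reverses products and preserves trace. For assertion $4$ I combine the previous pieces: writing $\ra_x$ in terms of $\la$ and $\lm$ (via assertion $1$, which gives $\ra_x$ is $\w$-self-adjoint, hence $\ra_x=\tfrac12\left(\ra_x+\ra_x^\aj\right)$, and via Proposition \ref{PrpPrda}(2) which relates $\ra_x$ to $\la_x$ and $\lm_x$), one expresses $\tr(\ra_x\ro\ra_y)$, $\tr(\ra_x\ro\la_y)$ in terms of $\tr(\lm_x\lm_y)$ and $\tr\left(\lm_x\left(\lm_y\right)^\aj\right)$; the cross terms involving $\tr\left(\lm_x\lm_y\right)$ versus $\tr\left(\left(\lm_x\right)^\aj\lm_y\right)=\tr\left(\lm_x\left(\lm_y\right)^\aj\right)$ collapse correctly because trace is invariant under $(\cdot)^\aj$. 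The factor of $2$ linking $\tr(\ra_x\ro\ra_y)$ to $\tr(\ra_x\ro\la_y)$ should fall out of the relation $\ra_x=\la_x-\lm_x$ (or its sign-correct version) once substituted into one of the two slots and symmetrized. The main obstacle throughout is purely the sign-and-coefficient bookkeeping in passing between the three products $\lm$, $\la$, $\ra$ and their $\w$-adjoints; I would pin these down once and for all at the start by fixing $\la_x=-\left(\lm_x\right)^\aj$ and $\ra_x(y)=\la_y(x)$, then everything else is mechanical.
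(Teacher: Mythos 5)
Your approach is essentially the paper's: its entire proof is the one-line observation that $\la_x=-\left(\lm_x\right)^\aj$ and $\ra_x=-\left(\lm_x+\left(\lm_x\right)^\aj\right)$, combined with $\tr(F)=\tr\left(F^\aj\right)$, and your reduction to $\lm_x$ and its $\omega$-adjoint is exactly that. The sign-bookkeeping you agonize over resolves at once: Proposition \ref{PrpPrda}(2) with $(a,b)=(0,1)$ gives $\la_x(y)-\la_y(x)=[x,y]_\m$, hence $\ra_x=\la_x-\lm_x=-\left(\lm_x+\left(\lm_x\right)^\aj\right)$ exactly as you first wrote it, so no symplectic-basis trace computation and no further appeal to \eqref{Cocy} are needed for assertions $2$ and $4$. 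Likewise in assertion $1$ the cocycle identity is superfluous: unwinding the definition gives $\omega\left(\ra_x(y),z\right)=\omega\left([y,z]_\m,x\right)=-\omega\left([z,y]_\m,x\right)=\omega\left(y,\ra_x(z)\right)$ directly (your intermediate $-\omega\left(z,[y,x]_\m\right)$ is a slip, though it does not affect the conclusion).
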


\begin{proof}
A straightforward computation, using
\begin{equation*}
\la_x=-\left(\lm_x\right)^\divideontimes,\qquad\ra_x=-\left(\lm_x+\left(\lm_x\right)^\divideontimes\right),
\end{equation*}
for $x\in\m$, and the fact that $\tr\left(F\right)=\tr\left(F^\aj\right)$, for any $F\in\End(\mathfrak{m})$.
\end{proof}

The \textit{Ricci curvature} of $\nc$ is defined by:
\begin{equation*}
\ric^{0,1}(x,y):=\tr\big(z\mapsto \Kc(x,z)y\big),\qquad\forall\,x,y\in\m.
\end{equation*}

\begin{proposition}
The Ricci curvature of the Zero-One connection $\nc$ is given by
\begin{equation}\label{RicA}
\ric^{0,1}(x,y)=2\tr\left(\lm_x\ro\lm_y\right)+2\tr\left(\lm_x\ro\left(\lm_y\right)^\aj\right)+2\,\omega\left(\lm_{u^\mathfrak{m}}(x),y\right)-\tr(D_{x,y}).
\end{equation}
\end{proposition}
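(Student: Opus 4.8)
The plan is to start from the general curvature formula \eqref{Curv} specialized to the Zero-One product, i.e. from the identity $\Kc(x,z)y=\left[\la_x,\la_z\right](y)-\la_{[x,z]_\m}(y)-\ad_{[x,z]_\h}(y)$, and take the trace over $z\mapsto\Kc(x,z)y$ for fixed $x,y\in\m$. I would treat the three summands separately. The term $z\mapsto\ad_{[x,z]_\h}(y)=[[x,z]_\h,y]=D_{x,y}(z)$ contributes exactly $-\tr(D_{x,y})$, which explains the last term in \eqref{RicA}. The term $z\mapsto\la_{[x,z]_\m}(y)$ should be rewritten using $\la_v(y)=-(\lm_v)^\aj(y)$, so that $\w\bigl(\la_{[x,z]_\m}(y),w\bigr)=-\w\bigl(y,[[x,z]_\m,w]\bigr)$; tracing this in $z$ means identifying the linear-in-$z$ map and expressing its trace through the vector $u^\m$ and the operator $\lm_{u^\m}$, which is where the term $2\,\w\bigl(\lm_{u^\m}(x),y\bigr)$ should come from (possibly after a factor is absorbed from one of the other pieces).

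Next I would handle the genuinely bilinear piece $z\mapsto\left[\la_x,\la_z\right](y)=\la_x\la_z(y)-\la_z\la_x(y)$. Since $\la$ is the canonical left-symmetric product \eqref{LiGrpLS}, each of these is linear in $z$, and its trace over $z$ should be expressible in terms of traces of compositions $\lm_x\ro\lm_y$ and $\lm_x\ro(\lm_y)^\aj$. Here I expect to lean heavily on Lemma \ref{tL,tR}, especially parts $3$ and $4$, which already package exactly the trace identities $\tr(\la_x\ro\la_y)=\tr(\lm_x\ro\lm_y)$ and $\tr(\ra_x\ro\la_y)=\tr(\lm_x\ro\lm_y)+\tr(\lm_x\ro(\lm_y)^\aj)$, together with Lemma \ref{tL,tR}(2) relating $\tr(\la_x)$ and $\tr(\ra_x)$ to $\w(x,u^\m)$. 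The strategy is to reorganize $z\mapsto\la_x\la_z(y)-\la_z\la_x(y)$ so that the trace becomes a sum of $\tr$ of operators of the form appearing in those lemmas; for instance $z\mapsto \la_z(w)=\ra_w(z)$ turns "left multiplication by the variable" into "$\ra$ applied to the variable," after which $\tr$ of a composite $\la_x\ro\ra_?$-type expression becomes visible.

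The main obstacle, as I see it, is bookkeeping rather than conceptual: correctly tracking which of the two skew-symmetric slots of $\w$ each vector sits in, and making sure the factor of $2$ in $2\tr(\lm_x\ro\lm_y)+2\tr(\lm_x\ro(\lm_y)^\aj)$ is produced consistently — it is natural to guess it arises because $\ra=\lm+(\lm)^\aj$ and $\tr(\ra_x\ro\ra_y)=2\tr(\ra_x\ro\la_y)$, but one must verify no cross-term has been double-counted with the $\la_{[x,z]_\m}$ contribution. A clean way to avoid sign errors is to pair everything against a test vector $z'\in\m$, write $\w(\Kc(x,z)y,z')$ using \eqref{Ka} (so $\Kc(x,z)=D_{x,z}^\aj-D_{z,x}^\aj$) or directly from \eqref{Curv}, move all operators off of the $z$-slot via adjointness, and only at the very end recognize the resulting $z$-linear endomorphism whose trace is being taken. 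I would then match term by term against \eqref{RicA}, using Lemma \ref{LemLY3}(4) to convert any residual $\tr(D_{x,y})-\tr(D_{y,x})$ into $\w(u^\m,\lm_x(y))$, which is precisely the shape of the third term.

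Once all four terms are assembled and the Jacobi identity / cocycle identity \eqref{Cocy} is used to collapse any leftover cyclic sums, the formula \eqref{RicA} follows. I expect the proof to conclude with the phrase "a straightforward computation using Lemmas \ref{LemLY3} and \ref{tL,tR}," since every ingredient needed has already been isolated in those two lemmas.
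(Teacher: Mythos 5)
Your proposal follows essentially the same route as the paper: rewrite $\Kc(x,\cdot)y$ as an operator in the trace slot by converting left multiplications by the variable into $\ra$'s (using torsion-freeness), identify the $D_{x,y}$ contribution directly, and then invoke the trace identities of Lemma \ref{tL,tR} to produce the terms $2\tr(\lm_x\ro\lm_y)+2\tr(\lm_x\ro(\lm_y)^\aj)$ and $2\,\omega(\lm_{u^\m}(x),y)$. The only minor discrepancy is bookkeeping of provenance — in the paper's decomposition $\Kc(x,\cdot)y=[\la_x,\ra_y]+\ra_y\ro\ra_x-\ra_{\la_x(y)}-D_{x,y}$ the $u^\m$-term comes from $-\tr(\ra_{\la_x(y)})$ rather than from the $\la_{[x,z]_\m}$ piece — but you flag this possibility yourself and it does not affect the validity of the argument.
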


\begin{proof} 
From $\eqref{Curv}$ and the fact that $\nc$ is torsion-free, we have:
\begin{equation}\label{CalRicA}
\Kc(x,\,\cdot\,)y=\left[\la_x,\ra_y\right]+\ra_y\ro\ra_x-\ra_{\la_x(y)}-D_{x,y}.
\end{equation}
Therefore,
\begin{equation*}
\ric^{0,1}(x,y)=\tr(\ra_x\ro\ra_y)-\tr(\ra_{\la_x(y)})-\tr(D_{x,y}).
\end{equation*}
Thus, the result follows by using Lemma \ref{tL,tR}.
\end{proof}

It is well known (see \cite[p. $14$]{Nom2}) that a torsion-free connection on a smooth manifold $M$ has symmetric Ricci curvature if and only if around each point of $M$ there is a local parallel volume form of $M$. In general, the Ricci curvature of the Zero-One connection $\nc$ is not symmetric. However, we have:

\begin{corollary}\label{SymRicA}
The Ricci curvature of $\nc$ is symmetric if and only if
\begin{equation}
\tr\left(\ad_{[x,y]_\mathfrak{m}}\right)=0,
\end{equation}
for any $x,y\in\mathfrak{m}$. This is in particular the case when $\mathfrak{g}$ is unimodular.
\end{corollary}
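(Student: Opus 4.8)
The plan is to read off the antisymmetric part of $\ric^{0,1}$ directly from the explicit formula \eqref{RicA} and identify it with the map $(x,y)\mapsto\tr\left(\ad_{[x,y]_\m}\right)$. Fix $x,y\in\m$ and examine $\ric^{0,1}(x,y)-\ric^{0,1}(y,x)$ summand by summand. The first term $2\tr\left(\lm_x\ro\lm_y\right)$ is symmetric in $x,y$ by cyclicity of the trace. For the second term I would use $\tr(F)=\tr(F^\aj)$ together with $(FG)^\aj=G^\aj F^\aj$ and $(F^\aj)^\aj=F$ to get $\tr\left(\lm_x\ro(\lm_y)^\aj\right)=\tr\left(\left(\lm_x\ro(\lm_y)^\aj\right)^\aj\right)=\tr\left(\lm_y\ro(\lm_x)^\aj\right)$, so this term is symmetric as well; hence the first two summands contribute nothing to the difference.

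Next I would treat the remaining two terms, which produce the obstruction. For $2\,\omega\left(\lm_{u^\m}(x),y\right)=2\,\omega\left([u^\m,x]_\m,y\right)$, applying the $2$-cocycle identity \eqref{Cocy} to the triple $(u^\m,x,y)$ and using skew-symmetry gives
\[
\omega\left([u^\m,x]_\m,y\right)-\omega\left([u^\m,y]_\m,x\right)=-\omega\left([x,y]_\m,u^\m\right)=\omega\left(u^\m,[x,y]_\m\right)=\tr\left(\lm_{[x,y]_\m}\right),
\]
where the last equality is the defining property $\omega(u^\m,\,\cdot\,)=\tr(\lm_{\,\cdot\,})$ of $u^\m$; so this term contributes $2\tr\left(\lm_{[x,y]_\m}\right)$. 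For the last term, Lemma \ref{LemLY3}(4) gives $\tr(D_{x,y})-\tr(D_{y,x})=\omega\left(u^\m,\lm_x(y)\right)=\omega\left(u^\m,[x,y]_\m\right)=\tr\left(\lm_{[x,y]_\m}\right)$, so $-\tr(D_{x,y})$ contributes $-\tr\left(\lm_{[x,y]_\m}\right)$. Adding the four contributions yields
\[
\ric^{0,1}(x,y)-\ric^{0,1}(y,x)=\tr\left(\lm_{[x,y]_\m}\right).
\]

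Finally I would rewrite this residue in the form claimed. For any $w\in\m$, reductivity gives $[\h,\m]\subseteq\m$, hence $\ad_w(\h)\subseteq\m$; writing $\ad_w\colon\g\to\g$ in block form along $\g=\h\oplus\m$, the $\h\to\h$ block therefore vanishes and the $\m\to\m$ block is exactly $\lm_w$, so $\tr(\ad_w)=\tr(\lm_w)$. Taking $w=[x,y]_\m$ shows $\ric^{0,1}$ is symmetric if and only if $\tr\left(\ad_{[x,y]_\m}\right)=0$ for all $x,y\in\m$, which is the corollary; the unimodular case is then immediate, since $\g$ unimodular means $\tr(\ad_z)=0$ for every $z\in\g$. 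The only real care needed is bookkeeping — checking that the coefficient $2$ on the $u^\m$-term and the $-1$ on the $\tr(D_{x,y})$-term leave the nonzero residue $\tr\left(\lm_{[x,y]_\m}\right)$ rather than cancelling, and tracking signs correctly through the cocycle step — and the identification $\tr(\ad_w)=\tr(\lm_w)$ for $w\in\m$, which is where reductivity enters; beyond this the computation is routine. (One could alternatively invoke the criterion recalled just before the statement, that a torsion-free connection has symmetric Ricci tensor exactly when it admits local parallel volume forms, but the direct computation above is more self-contained.)
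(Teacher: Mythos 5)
Your proof is correct and follows essentially the same route as the paper: isolate the antisymmetric part of \eqref{RicA}, use the cocycle identity \eqref{Cocy} on $(u^\m,x,y)$ together with Lemma \ref{LemLY3}(4) to get the residue $\tr\left(\lm_{[x,y]_\m}\right)$, and then identify $\tr(\lm_w)$ with $\tr(\ad_w)$ for $w\in\m$ via the block decomposition along $\g=\h\oplus\m$ (the paper does this with a dual basis, which is the same argument). Your explicit check that the two trace terms $2\tr(\lm_x\ro\lm_y)$ and $2\tr\left(\lm_x\ro(\lm_y)^\aj\right)$ are symmetric is a small point the paper leaves implicit, but otherwise the two proofs coincide.
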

\begin{proof}
For $x,y\in\mathfrak{m}$, we have:
\begin{eqnarray*}
\ric^{0,1}(x,y)-\ric^{0,1}(y,x)&=&2\,\omega\left(\lm_{u^\mathfrak{m}}(x),y\right)-2\,\omega\left(\lm_{u^\mathfrak{m}}(y),x\right)-\big\{\tr(D_{x,y})-\tr(D_{y,x})\big\}\\
&\stackrel{\rm\ref{LemLY3}}{=}&2\,\omega\left(\lm_{u^\mathfrak{m}}(x),y\right)+2\,\omega\left(x,\lm_{u^\mathfrak{m}}(y)\right)-\omega\left(u^\mathfrak{m},\lm_x(y)\right)\\
&\stackrel{\rm\eqref{Cocy}}{=}&\omega\left(u^\mathfrak{m},\lm_x(y)\right)\\
&=&\tr\left(\lm_{[x,y]_\m}\right).
\end{eqnarray*}
On the other hand, let $\{f_j\}_{1\leqslant j\leqslant \ell}$ be a basis of $\h$, and let $\{e_j\}_{1\leqslant j\leqslant k}$ be a basis of $\m$. For any $z\in\m$, we have:
\begin{eqnarray*}
	\tr(\ad_z)=\somjl f^*_j\big([z,f_j]\big)+\somjk e_j^*\big([z,e_j]\big)=\somjk e^*_j\big([z,e_j]_\m\big)=\tr\left(\lm_z\right).
\end{eqnarray*}
This gives that $\tr\left(\lm_z\right)=\tr\left(\ad_z\right)$, for any $z\in\mathfrak{m}$, proving the claim.
\end{proof}

Now we will prove our second main result. First, since $G/H$ is connected, it is well known that the identity component $G^0$ of $G$ acts transitively on $G/H$, and therefore, throughout the proof, we may assume without loss of generality that $G$ is connected.

\begin{proof}[\textnormal{\textbf{Proof of Theorem \ref{Theo1Flat1}}}] For the first statement, since $\nc$ is flat, it follows from Theorem \ref{ncFlt} that $\m$ is an ideal of $\g$. Consequently, the integral subgroup $G_\m:=\langle\exp_G(\m)\rangle$ corresponding to the Lie subalgebra $\m\subset\g$ is a connected normal (immersed) Lie subgroup of $G$. Moreover, the pair $(\mathfrak{m}, \omega)$ forms a symplectic Lie algebra. Regarding the second assertion, by using the fact that $[\h,\m]\subseteq\m$, it follows from \cite[p. $439$]{Nb} that $G=G_\m H$. Hence, the restriction of the smooth action $G\times G/H\to G/H$ to $G_\m$ is transitive. Furthermore, since $G_\m\subset G$, it is evident that $G_\m$ preserves both the symplectic form $\Omega$ and the Zero-One connection $\nc$. On the other hand, a small computation shows that the isotropy group at $H\in G/H$ is equal to the closed Lie subgroup $G_\m\cap H$ of $G_\m$. Moreover, since the Lie algebra of $G_\m\cap H$ is $\m\cap\h=\{0\}$, it follows that $G_\m\cap H$ is discrete. For the third statement, using that $\Gamma\subset G_\m$ is a discrete Lie subgroup of $G_\m$, we obtain that $\pi_{G_\m}:G_\m\to G_\m/\Gamma$ is a smooth covering map. But since $G_\m/\Gamma=G/H$ is simply connected, one can conclude that $\pi_{G_\m}:G_\m\to G_\m/\Gamma$ is a diffeomorphism. For the last assertion, when $G/H$ is compact, it can be deduced from \cite[p. $340$]{Hu} that $G_\mathfrak{m}$ is abelian, and this finishes the proof.
\end{proof}

Note that, if we assume that $G$ is simply connected, as we can always do by replacing $G$ with its universal covering group $\widetilde{G}$, we can establish that $G_\m$ is closed in $G$ (cf. \cite[p. $440$]{Nb}).\\

We end this section by a direct consequence of Theorem \ref{Theo1Flat1}.
\begin{corollary}
Let $(G/H,\Omega)$ be a compact, simply connected, symplectic reductive homogeneous $G$-space, and assume that $G$ acts almost effectively on $G/H$. Then, the Zero-One connection $\nc$ is never flat.
\end{corollary}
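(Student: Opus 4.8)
The plan is to argue by contradiction, so suppose the Zero-One connection $\nc$ is flat. Since $G/H$ is simply connected it is a fortiori connected, and by hypothesis $G$ acts almost effectively; hence all the standing assumptions of Theorem \ref{Theo1Flat1} are satisfied. As we are assuming in addition that $G/H$ is compact, assertion $(4)$ of Theorem \ref{Theo1Flat1} applies and tells us that $G_\m$ is abelian and that $(G/H,\Omega)$ is a symplectic torus.

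Next I would rule this out on topological grounds. Because $\omega\in\bigwedge^2\m^*$ is nondegenerate, $\dim(G/H)=\dim\m$ is an even integer $2n$ with $2n\geq 2$; thus the symplectic torus produced above is diffeomorphic to $\mathbb{T}^{2n}$ with $n\geq 1$, and consequently $\pi_1(G/H)\cong\intg^{2n}\neq\{0\}$. This contradicts the assumption that $G/H$ is simply connected. Therefore $\nc$ cannot be flat, which is exactly the asserted statement.

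I do not expect any genuine obstacle here: all of the substantive work has already been carried out in Theorem \ref{Theo1Flat1}, and the corollary merely combines its part $(4)$ with the elementary observation that a positive-dimensional torus is never simply connected. One could equally well route the argument through part $(3)$: simple connectedness forces $G/H\cong G_\m/\Gamma$ to be a symplectic Lie group, part $(4)$ then forces $G_\m$ to be abelian, hence $G/H\cong\rel^{2n}/\Gamma$ with $\Gamma$ discrete, and compactness forces $\Gamma$ to be a lattice of full rank $2n$, so that $\pi_1(G/H)\cong\Gamma\cong\intg^{2n}\neq\{0\}$ — again contradicting simple connectedness. Either presentation is short; I would favor the one via part $(4)$ as the most direct.
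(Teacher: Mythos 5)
Your argument is correct and is exactly the route the paper intends: the corollary is stated there as a direct consequence of Theorem \ref{Theo1Flat1}, and your use of part $(4)$ (compactness forces $(G/H,\Omega)$ to be a symplectic torus, hence of positive even dimension and not simply connected) is the same reasoning, spelled out. No gaps.
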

\section{The natural symplectic connection $\ns$}
In this section,we will compute the curvature and the Ricci curvature tensors of the natural symplectic connection $\ns:=\n^{\frac{1}{3},\frac{1}{3}}$, prove our third main result, and then present some explicit examples. In what follows, we denote $\ls:=\LS^{\frac{1}{3},\frac{1}{3}}$, and define the bilinear map $\rs:\mathfrak{m}\times\m\to\mathfrak{m}$ by $\rs_x(y):=\ls_y(x)$. The following proposition is a straightforward computation.
\begin{proposition}\label{L,R}
For any $x,y\in\mathfrak{m}$, the following formulas hold:
\begin{itemize}
\item[$1.$] $\ls_x=\frac{2}{3}\la_x-\frac{1}{3}\ra_x=\frac{1}{3}\left(\lm_x-\left(\lm_x\right)^\aj\right)$;
\item[$2.$] $\rs_x=\frac{2}{3}\ra_x-\frac{1}{3}\la_x=-\frac{1}{3}\left(2\lm_x+\left(\lm_x\right)^\aj\right)$;
\item[$3.$] $\omega\left(\ls_x(y),z\right)=-\omega\left(y,\ls_x(z)\right)$;
\item[$4.$] $\tr(\ls_x)=0$, and $\,\tr(\rs_x)=\tr(\la_x)$.
\end{itemize}
\end{proposition}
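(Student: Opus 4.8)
The plan is to bring everything back to the two elementary identities $\la_x=-(\lm_x)^\aj$ and $\ra_x=-\bigl(\lm_x+(\lm_x)^\aj\bigr)$ already recorded in the proof of Lemma \ref{tL,tR}, then to unwind the definition \eqref{na} at $a=b=\tfrac13$ and read off the remaining formulas by linear algebra in $\End(\m)$; the passage from $\ls$ to $\rs$ will be supplied for free by Proposition \ref{PrpPrda}(2).

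First I would establish the second equality in item $1$. For $x,y,z\in\m$, formula \eqref{na} with $a=b=\tfrac13$ gives $\omega\bigl(\ls_x(y),z\bigr)=\tfrac13\omega\bigl(\lm_x(y),z\bigr)+\tfrac13\omega\bigl(\lm_x(z),y\bigr)$, and rewriting the last term by skew-symmetry of $\omega$ and the definition of the $\omega$-adjoint as $\tfrac13\omega\bigl(\lm_x(z),y\bigr)=-\tfrac13\omega\bigl(y,\lm_x(z)\bigr)=-\tfrac13\omega\bigl((\lm_x)^\aj(y),z\bigr)$ gives, by nondegeneracy of $\omega$, the identity $\ls_x=\tfrac13\bigl(\lm_x-(\lm_x)^\aj\bigr)$. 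The first equality $\ls_x=\tfrac23\la_x-\tfrac13\ra_x$ then follows by substituting the two identities recalled above and collecting coefficients. For item $2$, Proposition \ref{PrpPrda}(2) at $a=b=\tfrac13$ reads $\ls_x(y)-\ls_y(x)=[x,y]_\m$, whence $\rs_x(y)=\ls_y(x)=\ls_x(y)-\lm_x(y)$, i.e. $\rs_x=\ls_x-\lm_x$; plugging in the formula for $\ls_x$ just obtained yields $\rs_x=-\tfrac13\bigl(2\lm_x+(\lm_x)^\aj\bigr)$, and the same substitution turns this into $\rs_x=\tfrac23\ra_x-\tfrac13\la_x$.

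For item $3$, the adjoint being linear and involutive gives $(\ls_x)^\aj=\tfrac13\bigl((\lm_x)^\aj-\lm_x\bigr)=-\ls_x$, so $\ls_x$ is $\omega$-skew-symmetric, which is exactly the asserted $\omega\bigl(\ls_x(y),z\bigr)=-\omega\bigl(y,\ls_x(z)\bigr)$; alternatively this is immediate from Theorem \ref{thm1}(1) (here $a=b$) together with the observation made before Proposition \ref{PrpPrda} that a $G$-invariant connection preserves $\Omega$ precisely when each $\lN_x$ is $\omega$-skew-symmetric. Item $4$ is then bookkeeping: $\tr(\ls_x)=\tfrac13\bigl(\tr(\lm_x)-\tr((\lm_x)^\aj)\bigr)=0$ because $\tr(F)=\tr(F^\aj)$ (equivalently, $\ls_x\in\mathfrak{sp}(\m,\omega)$ is traceless), while $\tr(\rs_x)=\tfrac23\tr(\ra_x)-\tfrac13\tr(\la_x)=\tfrac43\tr(\la_x)-\tfrac13\tr(\la_x)=\tr(\la_x)$ using $\tr(\ra_x)=2\tr(\la_x)$ from Lemma \ref{tL,tR}(2). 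There is no genuine obstacle here; the only point requiring care is keeping the sign conventions for the $\omega$-adjoint (and the cocycle identity \eqref{Cocy}, should one compute item $2$ directly rather than through Proposition \ref{PrpPrda}(2)) straight, since a misplaced sign in the interplay between $\lm_x$ and $(\lm_x)^\aj$ propagates through all four items.
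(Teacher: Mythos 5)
Your proposal is correct, and it fills in exactly the computation the paper dismisses as "straightforward": everything reduces to the identities $\la_x=-(\lm_x)^\aj$ and $\ra_x=-\bigl(\lm_x+(\lm_x)^\aj\bigr)$ recorded in the proof of Lemma \ref{tL,tR}, together with $\tr(F)=\tr(F^\aj)$ and Proposition \ref{PrpPrda}(2). All signs and coefficients check out, so this matches the paper's intended argument.
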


The next proposition gives a formula for the curvature tensor of $\ns$. 
\begin{proposition}
The curvature of the natural symplectic connection $\ns$ is given by
\begin{equation}\label{Ks}
\Ks(x,y)=\tfrac{1}{9}\Big\{\ra_{[x,y]_\mathfrak{m}}-\left[\ra_x,\ra_y\right]-2\la_{[x,y]_\mathfrak{m}}\Big\}+\tfrac{2}{9}\Big\{\Kc(x,y)-\Kc(x,y)^\aj-\tfrac{5}{2}\ad_{[x,y]_\h}\Big\},
\end{equation}
where $\Kc$ is the curvature of the Zero-One connection $\nc$ given by \eqref{Ka}. 
\end{proposition}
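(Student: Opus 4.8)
The plan is to start from the general curvature formula \eqref{Curv} applied to the product $\ls=\LS^{\frac13,\frac13}$, namely
$$\Ks(x,y)=\left[\ls_x,\ls_y\right]-\ls_{[x,y]_\m}-\ad_{[x,y]_\h},$$
and then rewrite every term using the decomposition from Proposition \ref{L,R}(1), $\ls_x=\tfrac13\bigl(\lm_x-(\lm_x)^\aj\bigr)=\tfrac23\la_x-\tfrac13\ra_x$, together with the identities $\la_x=-(\lm_x)^\aj$ and $\ra_x=-\bigl(\lm_x+(\lm_x)^\aj\bigr)$. The goal is to organize the resulting expression so that a recognizable copy of $\Kc(x,y)=D_{x,y}^\aj-D_{y,x}^\aj$ (and of its $\aj$-adjoint) appears, thereby absorbing the terms that would otherwise require the bracket $[\cdot,\cdot]_\h$ explicitly.

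First I would handle the linear-in-$[x,y]$ part. Since $[x,y]_\m$ enters $\ls_{[x,y]_\m}$ linearly, Proposition \ref{L,R}(1)--(2) immediately gives $\ls_{[x,y]_\m}=\tfrac23\la_{[x,y]_\m}-\tfrac13\ra_{[x,y]_\m}$; this already explains the $\tfrac19\ra_{[x,y]_\m}-\tfrac29\la_{[x,y]_\m}$ pattern in \eqref{Ks} once a factor is tracked. Next I would expand the commutator $\left[\ls_x,\ls_y\right]$ by bilinearity into a combination of $\left[\la_x,\la_y\right]$, $\left[\la_x,\ra_y\right]$, $\left[\ra_x,\la_y\right]$, $\left[\ra_x,\ra_y\right]$ with coefficients coming from $\tfrac23,-\tfrac13$. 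The terms $\left[\la_x,\la_y\right]$ and $\left[\ra_x,\ra_y\right]$ are the "diagonal" ones; the cross terms $\left[\la_x,\ra_y\right]-\left[\ra_x,\la_y\right]$ should, after using the $\aj$-adjoint relations and the cocycle identity \eqref{Cocy}, reassemble into something expressible through $D_{x,y}$. The key computational input is that, from the proof of \eqref{Ka}, one has $\left[\lm_y,\lm_x\right]=-\lm_{[x,y]_\m}-D_{x,z'}\big|\text{-type terms}$; more precisely the proof there shows $\omega\bigl(\left[\la_x,\la_y\right](z),z'\bigr)$ in terms of $\la_{[x,y]_\m}$, $D_{x,z}(y)$ and $D_{x,y}-D_{y,x}$, so $\left[\la_x,\la_y\right]=\la_{[x,y]_\m}+D_{x,\cdot}(\cdot)\text{-correction}+\bigl(D_{x,y}-D_{y,x}\bigr)^{\!\text{acting}}$, i.e. the diagonal $\la$-commutator already carries a $\Kc$. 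Taking $\aj$-adjoints of that relation (using $\la_x^\aj=-\lm_x$, hence controlling $\ra_x^\aj=\ra_x$ via Lemma \ref{tL,tR}(1) and Proposition \ref{L,R}(3)) produces the $\Kc(x,y)^\aj$ term.

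So the key steps in order are: (i) substitute $\ls_x=\tfrac13(\lm_x-(\lm_x)^\aj)$ into $\left[\ls_x,\ls_y\right]-\ls_{[x,y]_\m}-\ad_{[x,y]_\h}$; (ii) expand the commutator and collect the purely $\lm$-commutator part, the purely $(\lm)^\aj$-commutator part, and the mixed parts; (iii) recognize, via the identity $\left[\lm_x,\lm_y\right]=\lm_{[x,y]_\m}+\ad_{[x,y]_\h}+\bigl(D_{x,y}-D_{y,x}\bigr)$ (this is exactly the displayed identity in the proof of Lemma \ref{LemLY3}(4), namely $D_{x,y}-D_{y,x}=\lm_{[x,y]_\m}+[\lm_y,\lm_x]+\ad_{[x,y]_\h}$), that the diagonal part contributes $\tfrac19$ of $D_{x,y}-D_{y,x}$ up to $\lm_{[x,y]_\m}$ and $\ad_{[x,y]_\h}$ terms; (iv) take the $\aj$-adjoint of that same identity to handle the $(\lm)^\aj$-diagonal part, yielding $D_{x,y}^\aj-D_{y,x}^\aj=\Kc(x,y)$ and its transpose-bracket; (v) simplify the mixed terms $\tfrac{2}{9}\bigl([\lm_x,(\lm_y)^\aj]+[(\lm_x)^\aj,\lm_y]\bigr)$ using Lemma \ref{tL,tR} and \eqref{Cocy}, and convert everything back into $\la$, $\ra$, $\Kc$, $\ad_{[x,y]_\h}$; (vi) carefully tally the coefficients of $\ad_{[x,y]_\h}$ coming from the two $D_{x,y}-D_{y,x}$ reductions and from the explicit $-\ad_{[x,y]_\h}$ in \eqref{Curv}, and check they sum to the $-\tfrac{2}{9}\cdot\tfrac52=-\tfrac59$ appearing in \eqref{Ks}. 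I expect step (vi), the bookkeeping of the $\ad_{[x,y]_\h}$ coefficients across the three sources, together with step (v), the reduction of the mixed commutators, to be the main obstacle; everything else is a disciplined but routine substitution once the identity $\left[\lm_x,\lm_y\right]-\lm_{[x,y]_\m}-\ad_{[x,y]_\h}=D_{x,y}-D_{y,x}$ and its $\aj$-adjoint are in hand.
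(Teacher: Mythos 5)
Your plan is correct in substance and would yield \eqref{Ks}, and at the top level it is the same proof as the paper's: start from \eqref{Curv} applied to $\ls$, substitute the decomposition of Proposition \ref{L,R}(1), expand the commutator, and recognize copies of $\Kc$. The difference is in how the cross terms are absorbed. The paper expands in the basis $\{\la,\ra\}$, writes $\tfrac49[\la_x,\la_y]=\tfrac49\big(\Kc(x,y)+\la_{[x,y]_\m}+\ad_{[x,y]_\h}\big)$, and then disposes of $[\la_y,\ra_x]-[\la_x,\ra_y]$ by combining the identity \eqref{CalRicA} with the torsion-freeness of $\nc$ and the first Bianchi identity, which is where $\Kc(x,y)^\aj$ appears. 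You instead expand in $\{\lm,(\lm)^\aj\}$ and reduce everything to the single Jacobi-derived operator identity from the proof of Lemma \ref{LemLY3}(4) together with its $\aj$-adjoint; the mixed part $[\lm_x,(\lm_y)^\aj]+[(\lm_x)^\aj,\lm_y]$ then equals $[\ra_x,\ra_y]-[\lm_x,\lm_y]+[\lm_x,\lm_y]^\aj$ by pure adjoint algebra, so no Bianchi identity is needed. This is arguably a cleaner organization of the same computation; what it costs you is that you must also use $\big(\ad_{[x,y]_\h}\big)^\aj=-\ad_{[x,y]_\h}$ (the infinitesimal form of \eqref{OmgSmlInv}) when taking adjoints, a step your outline does not mention but which is essential to land on the coefficient $-\tfrac59$ of $\ad_{[x,y]_\h}$.

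One concrete correction: the identity you state in steps (iii) and (vi) has the wrong sign. From the displayed identity in the proof of Lemma \ref{LemLY3}, $D_{x,y}-D_{y,x}=\lm_{[x,y]_\m}+[\lm_y,\lm_x]+\ad_{[x,y]_\h}$, one gets $[\lm_x,\lm_y]-\lm_{[x,y]_\m}-\ad_{[x,y]_\h}=D_{y,x}-D_{x,y}=-\Kc(x,y)^\aj$, not $D_{x,y}-D_{y,x}$; likewise the mixed commutators enter $[\ls_x,\ls_y]$ with coefficient $-\tfrac19$, not $\tfrac29$. With these signs fixed the bookkeeping in step (vi) closes and reproduces \eqref{Ks} exactly.
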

\begin{proof}
For $x,y\in\mathfrak{m}$, the curvature of $\ns$ is given by:
\begin{equation*}
\Ks(x,y)=\left[\ls_x,\ls_y\right]-\ls_{[x,y]_\mathfrak{m}}-\ad_{[x,y]_\h}.
\end{equation*}
Using Proposition \ref{L,R}, we get:
\begin{eqnarray*}
\Ks(x,y)&=&\tfrac{4}{9}\left[\la_x,\la_y\right]-\tfrac{2}{9}\left[\la_x,\ra_y\right]+\tfrac{2}{9}\left[\la_y,\ra_x\right]\\
&&+\tfrac{1}{9}\left[\ra_x,\ra_y\right]-\tfrac{2}{3}\la_{[x,y]_\mathfrak{m}}+\tfrac{1}{3}\ra_{[x,y]_\mathfrak{m}}-\ad_{[x,y]_\h}\\
&=&\tfrac{1}{9}\Big\{4\Kc(x,y)-2\la_{[x,y]_\mathfrak{m}}-5\ad_{[x,y]_\h}+\left[\ra_x,\ra_y\right]\\
&&+3\ra_{[x,y]_\mathfrak{m}}-2\left[\la_x,\ra_y\right]+2\left[\la_y,\ra_x\right]\Big\}.
\end{eqnarray*}
On the other hand, using \eqref{CalRicA}, we have: 
\begin{eqnarray*}
\left[\la_y,\ra_x\right](z)-\left[\la_x,\ra_y\right](z)&=&\Kc(y,z)x-\ra_x\ro\ra_y(z)+\ra_{\la_y(x)}(z)+D_{y,x}(z)\\
&&-\Kc(x,z)y+\ra_y\ro\ra_x(z)-\ra_{\la_x(y)}(z)-D_{x,y}(z)\\
&=&-\left[\ra_x,\ra_y\right](z)-\ra_{[x,y]_\mathfrak{m}}(z)-\left(D_{x,y}-D_{y,x}\right)(z)\\
&&+\Kc(y,z)x+\Kc(z,x)y\\
&=&-\left[\ra_x,\ra_y\right](z)-\ra_{[x,y]_\mathfrak{m}}(z)-\Kc(x,y)^\aj z-\Kc(x,y)z,
\end{eqnarray*}
for $z\in\mathfrak{m}$, where the second equality follows from the fact that the Zero-One connection $\nc$ is torsion-free, and the third one form Bianchi's identity. 
\end{proof}

Similarly to the Zero-One connection, the \textit{Ricci curvature} of the natural symplectic connection $\ns$ is defined by:
\begin{equation*}
\ric^\mathbf{s}(x,y):=\tr\big(z\mapsto \Ks(x,z)y\big).
\end{equation*}
In this case, since $\ns$ is symplectic, the Ricci curvature $\ric^\mathbf{s}$ is symmetric, i.e., $\ric^\mathbf{s}(x,y)=\ric^\mathbf{s}(y,x)$ for any $x,y\in\mathfrak{m}$. The following proposition gives a formula for $\ric^\mathbf{s}$.

\begin{proposition}
The Ricci curvature of the natural symplectic connection $\ns$ is given by
\begin{equation}\label{RicS}
\ric^\mathbf{s}(x,y)=\tfrac{1}{9}\Big\{\tr\left(\lm_x\ro\lm_y\right)+\omega\left(x,\lm_{u^\mathfrak{m}}(y)\right)\Big\}+\tfrac{2}{9}\Big\{\ric^{0,1}(x,y)-\tr(D_{y,x})-\tfrac{5}{2}\tr(D_{x,y})\Big\},
\end{equation}
where $\ric^{0,1}$ is the Ricci curvature of $\nc$ given by \eqref{RicA}, and $D_{x,y}$ is the linear endomorphism defined in \eqref{DxyDef}.
\end{proposition}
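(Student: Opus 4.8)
The plan is to feed a running variable $z$ into the second slot of the curvature formula \eqref{Ks}, apply the resulting endomorphism to $y$, and take the trace over $z$, since $\ric^\mathbf{s}(x,y)=\tr\big(z\mapsto\Ks(x,z)y\big)$. Formula \eqref{Ks} decomposes $\Ks$ into a $\tfrac19$-part built from the Zero-One operators $\ra,\la$ and a $\tfrac29$-part built from $\Kc$ and $\ad$; I would trace the two parts separately, using throughout the identities $\la_w=-(\lm_w)^\aj$ and $\ra_w=-(\lm_w+(\lm_w)^\aj)$ from the proof of Lemma \ref{tL,tR}, together with $\tr(F)=\tr(F^\aj)$, cyclicity of the trace, and the definition of $u^\mathfrak{m}$.

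For the $\tfrac29$-part, the term $\Kc(x,z)y$ traces over $z$ to $\ric^{0,1}(x,y)$ by the very definition of the Zero-One Ricci tensor. For $\Kc(x,z)^\aj y$ I would use \eqref{Ka}, which gives $\Kc(x,z)^\aj=D_{x,z}-D_{z,x}$: applied to $y$, the term $D_{x,z}(y)=[[x,y]_\h,z]$ is, as a function of $z$, the isotropy operator $\ad^\mathfrak{m}_{[x,y]_\h}$, whose trace vanishes because $\Ad(H)$ preserves $\omega$ and hence acts on $\mathfrak{m}$ by symplectic (thus determinant one) transformations, while $D_{z,x}(y)=-D_{y,x}(z)$ by Lemma \ref{LemLY3}(1), contributing $-\tr(D_{y,x})$. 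Finally $\ad_{[x,z]_\h}(y)$, viewed as a function of $z$, is precisely $D_{x,y}$ by \eqref{DxyDef}, contributing $\tr(D_{x,y})$. Thus the $\tfrac29$-part reproduces $\tfrac29\{\ric^{0,1}(x,y)-\tr(D_{y,x})-\tfrac52\tr(D_{x,y})\}$, matching \eqref{RicA} wherever needed.

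For the $\tfrac19$-part, I would expand $\ra_{[x,z]_\m}(y)$, $[\ra_x,\ra_z](y)$ and $\la_{[x,z]_\m}(y)$ as operators in $z$. Two of the three resulting traces are polynomial expressions in $\tr(\lm_x\ro\lm_y)$ and $\tr(\lm_x\ro(\lm_y)^\aj)$; the only remaining contribution comes from the piece $\ra_z(\ra_x(y))=\la_{\la_y(x)}(z)$ inside the commutator, whose trace is $\tr(\la_{\la_y(x)})=\omega(\la_y(x),u^\mathfrak{m})$ by Lemma \ref{tL,tR}(2), and a one-line manipulation with $\la_y=-(\lm_y)^\aj$, the definition of $u^\mathfrak{m}$ and skew-symmetry of $\omega$ rewrites this as $\omega(x,\lm_{u^\mathfrak{m}}(y))$. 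When the three contributions are assembled, all terms carrying $\tr(\lm_x\ro(\lm_y)^\aj)$ cancel, leaving exactly $\tfrac19\{\tr(\lm_x\ro\lm_y)+\omega(x,\lm_{u^\mathfrak{m}}(y))\}$. Adding the two parts yields \eqref{RicS}.

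The main obstacle is not conceptual but the careful bookkeeping: one must consistently track which slot carries the trace variable $z$ when distributing over the commutator $[\ra_x,\ra_z]$ and over the $\omega$-adjoint $\Kc(x,z)^\aj$, always rewriting each operator as a composition acting on $z$ before reading off its trace, and repeatedly converting $\omega$-adjoints into transposes of products of the $\lm_\bullet$'s. Beyond this I expect no genuine difficulty, since Lemmas \ref{LemLY3} and \ref{tL,tR}, the definitions \eqref{DxyDef} and of $u^\mathfrak{m}$, and the formulas \eqref{Ka}, \eqref{RicA}, \eqref{Ks} supply every ingredient.
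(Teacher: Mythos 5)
Your proposal is correct and follows essentially the same route as the paper: trace formula \eqref{Ks} in the $z$-slot term by term, reducing each piece via the identities of Lemmas \ref{LemLY3} and \ref{tL,tR} (the paper records exactly the four auxiliary traces you describe, including $\tr\left(z\mapsto \Kc(x,z)^\aj y\right)=\tr(D_{y,x})$, which you rederive by splitting off the traceless $\ad_{[x,y]_\h}$ part). The bookkeeping you flag as the main obstacle does close up as you predict: the $\tr\left(\lm_x\ro(\lm_y)^\aj\right)$ contributions cancel and the surviving $\tr\left(\la_{\ra_x(y)}\right)$ term gives $\omega\left(x,\lm_{u^\mathfrak{m}}(y)\right)$.
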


\begin{proof}
For any $x,y,z\in\mathfrak{m}$, a direct computation gives: 
\begin{eqnarray*}
\tr\left(z\mapsto\ra_{[x,z]_\mathfrak{m}}(y)\right)&=&\tr(\la_y\ro\la_x)-\tr(\la_y\ro\ra_x);\\
\tr\big(z\mapsto\left[\ra_x,\ra_z\right](y)\big)&=&\tr(\ra_x\ro\la_y)-\tr(\la_{\ra_x(y)});\\
\tr\left(z\mapsto\la_{[x,z]_\mathfrak{m}}(y)\right)&=&\tr(\ra_y\ro\la_x)-\tr(\ra_y\ro\ra_x);\\
\tr\left(z\mapsto \Kc(x,z)^\aj y\right)&=&\tr(D_{y,x}).
\end{eqnarray*}
Thus, using \eqref{Ks} and Lemma \ref{tL,tR}, the equality \eqref{RicS} follows.
\end{proof}

For a symplectic Lie group $(G,\Omega)$, we have $\lm=\ad$, and $D_{x,y}=0$, for any $x,y\in\mathfrak{g}$. Further, we denote by $u^\mathfrak{g}\in\mathfrak{g}$ the unique vector of $\mathfrak{g}$ which satisfies $\omega\left(u^\mathfrak{g},x\right)=\tr\left(\ad_x\right)$, for all $x\in\mathfrak{g}$. The following proposition, which follows directly from \eqref{Ks} and \eqref{RicS}, gives the curvature and the Ricci curvature of the natural symplectic connection $\ns$ of a symplectic Lie group.
\begin{proposition}
On a symplectic Lie group $(G,\Omega)$, the natural symplectic connection $\ns$ is defined by the following product
\begin{equation}\label{PrdLiGrp}
\ls_x(y)=\tfrac{1}{3}\Big\{\la_x(y)+[x,y]\Big\},
\end{equation}
where $\la:\mathfrak{g}\times\mathfrak{g}\to\mathfrak{g}$ is the canonical left symmetric product on $(\mathfrak{g},\omega)$. Moreover, the curvature of $\ns$ is given by
\begin{equation}\label{KsLi}
\Ks(x,y)=\tfrac{1}{9}\left\{\ra_{[x,y]}-\left[\ra_x,\ra_y\right]-2\la_{[x,y]}\right\}.
\end{equation}
Further, the Ricci curvature of $\ns$ is given by
\begin{equation}\label{RisLi}
\ric^\mathbf{s}(x,y)=\tfrac{1}{9}\Big\{\kappa_{\mathfrak{g}}\left(x,y\right)+\omega\left(x,\ad_{u^{\mathfrak{g}}}(y)\right)\Big\},
\end{equation}
for $x,y\in\mathfrak{g}$, where $\kappa_\mathfrak{g}:\mathfrak{g}\times\mathfrak{g}\to{\mathbb{R}}$ is the Killing form of $\mathfrak{g}$.
\end{proposition}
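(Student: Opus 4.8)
The plan is to obtain all three formulas as specializations of the general expressions already derived, using the fact that a Lie group $G$ is the reductive homogeneous space $G/\{e\}$ with $\m=\g$ and $\h=\{0\}$. The first step is to record the dictionary forced by $\h=\{0\}$: the $\m$-component of the bracket is the full bracket and its $\h$-component vanishes, so $\lm_x=\ad_x$, $\ad_{[x,y]_\h}=0$, and $D_{x,y}(z)=[[x,z]_\h,y]=0$ for all $x,y,z\in\g$; moreover $u^\m$ coincides with $u^\g$ since $\tr(\lm_x)=\tr(\ad_x)$. The second step, which is the structural heart of the argument, is the observation that the Zero-One connection of a symplectic Lie group is flat: by Proposition \ref{NaFlt}, flatness of $\nc$ is equivalent to $[[\m,\m]_\h,\m]=\{0\}$, and this holds vacuously when $\h=\{0\}$. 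Hence $\Kc=0$, and consequently its Ricci curvature $\ric^{0,1}$ vanishes as well (being the trace of $z\mapsto\Kc(x,z)y$).

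For the product formula \eqref{PrdLiGrp}, I would start from Proposition \ref{L,R}(1), namely $\ls_x=\tfrac{1}{3}\bigl(\lm_x-(\lm_x)^\aj\bigr)$, and substitute the identity $(\lm_x)^\aj=-\la_x$ established in the proof of Lemma \ref{tL,tR}; this gives $\ls_x=\tfrac{1}{3}(\lm_x+\la_x)$, which together with $\lm_x=\ad_x$ is precisely $\ls_x(y)=\tfrac{1}{3}\bigl(\la_x(y)+[x,y]\bigr)$. I would note in passing that this already holds on any reductive homogeneous space with $[x,y]$ replaced by $[x,y]_\m$, and that $\la$ agrees with the canonical left symmetric product \eqref{LiGrpLS}, the latter being exactly \eqref{na} evaluated at $(a,b)=(0,1)$.

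The curvature and Ricci formulas then follow by direct substitution into \eqref{Ks} and \eqref{RicS}. In \eqref{Ks}, plugging $\Kc=0$ and $[x,y]_\h=0$ annihilates the entire $\tfrac{2}{9}\{\cdots\}$ term and leaves exactly \eqref{KsLi}. In \eqref{RicS}, plugging $D_{x,y}=D_{y,x}=0$ and $\ric^{0,1}=0$ likewise kills the $\tfrac{2}{9}\{\cdots\}$ bracket; in the surviving first bracket one identifies $\tr(\lm_x\ro\lm_y)=\tr(\ad_x\ad_y)=\kappa_\g(x,y)$ as the Killing form and $\lm_{u^\m}=\ad_{u^\g}$, producing \eqref{RisLi}.

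Since every ingredient reduces to a substitution into formulas already established, there is no genuine analytic difficulty. The one point that truly carries weight, and the step I would state most carefully, is the vanishing of $\Kc$ and hence of $\ric^{0,1}$ via Proposition \ref{NaFlt}: this is exactly what forces the two connections' curvatures to agree up to the overall $\tfrac{1}{9}$ factor, and it also spares us from verifying the non-obvious cancellation that a direct evaluation of $\ric^{0,1}$ from \eqref{RicA} (with $\lm=\ad$, $D=0$) would otherwise demand. Beyond that, the remaining risk is purely bookkeeping, so I would double-check the signs and coefficients when passing from \eqref{Ks} and \eqref{RicS} to \eqref{KsLi} and \eqref{RisLi}, as these are the only places an error could enter.
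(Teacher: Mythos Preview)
Your proposal is correct and follows essentially the same route as the paper: the paper simply notes that for a symplectic Lie group one has $\lm=\ad$ and $D_{x,y}=0$, and then asserts that the proposition ``follows directly from \eqref{Ks} and \eqref{RicS}''. Your write-up just makes this substitution explicit, and your emphasis on $\Kc=0$ (hence $\ric^{0,1}=0$) is exactly the content of $D_{x,y}=0$ combined with \eqref{Ka}.
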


The next corollary gives an algebraic characterization of the Ricci-flatness of the natural symplectic connection on a symplectic unimodular Lie group.

\begin{corollary}\label{UniLiGrp}
Let $(G,\Omega)$ be a symplectic unimodular Lie group. The natural symplectic connection $\ns$ is Ricci-flat if and only if the Killing form of $\mathfrak{g}$ vanishes.
\end{corollary}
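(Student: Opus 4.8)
The plan is to read off the statement directly from the curvature formula \eqref{RisLi}, using unimodularity to kill the second term on the right-hand side. First I would recall that for a symplectic Lie group $(G,\Omega)$ the natural symplectic connection has Ricci curvature
\begin{equation*}
\ric^\mathbf{s}(x,y)=\tfrac{1}{9}\Big\{\kappa_{\mathfrak{g}}(x,y)+\omega\big(x,\ad_{u^{\mathfrak g}}(y)\big)\Big\},\qquad x,y\in\mathfrak{g},
\end{equation*}
where $u^{\mathfrak g}\in\mathfrak g$ is the unique vector satisfying $\omega(u^{\mathfrak g},x)=\tr(\ad_x)$ for all $x\in\mathfrak g$.

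The key observation is that unimodularity of $G$ means $\tr(\ad_x)=0$ for every $x\in\mathfrak g$, hence $\omega(u^{\mathfrak g},x)=0$ for all $x\in\mathfrak g$; since $\omega$ is nondegenerate this forces $u^{\mathfrak g}=0$, and therefore $\ad_{u^{\mathfrak g}}=0$. Substituting into the formula above yields $\ric^\mathbf{s}(x,y)=\tfrac19\,\kappa_{\mathfrak g}(x,y)$ for all $x,y\in\mathfrak g$. From this identity the equivalence is immediate: $\ric^\mathbf{s}$ vanishes identically if and only if the Killing form $\kappa_{\mathfrak g}$ vanishes identically.

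There is no real obstacle here once \eqref{RisLi} is in hand; the statement is a direct corollary, the only point worth spelling out being the nondegeneracy argument that produces $u^{\mathfrak g}=0$ from the vanishing of all traces $\tr(\ad_x)$. I would keep the write-up to a couple of lines, citing \eqref{RisLi} and the definition of $u^{\mathfrak g}$.
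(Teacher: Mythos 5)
Your proposal is correct and follows exactly the route the paper intends: the corollary is stated as an immediate consequence of the Ricci formula \eqref{RisLi}, with unimodularity forcing $u^{\mathfrak g}=0$ by nondegeneracy of $\omega$ and hence $\ric^\mathbf{s}=\tfrac19\kappa_{\mathfrak g}$. Nothing is missing.
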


\begin{example}
Consider the Lie algebra $\mathfrak{r}_6$ which is defined by the following Lie brackets:
\begin{equation*}
	[e_1,e_5]=e_2,\qquad [e_2,e_5]=e_1,\qquad [e_3,e_5]=e_4,\et [e_4,e_5]=-e_3,
\end{equation*}
where $\mathbb{B}:=\{e_j\}_{1\leqslant j\leqslant 6}$ is a basis of $\mathfrak{r}_6$, and the other brackets are either zero or given by skew-symmetry. It is clear that $\mathfrak{r}_6$ is a $2$-step solvable, non-nilpotent Lie algebra. Moreover, we have:
\begin{eqnarray*}
	&&\mathcal{M}_\mathbb{B}\big(\ad_{e_1}\big)=\begin{psmallmatrix}
		0 & 0 & 0 & 0 & 0 & 0 
		\\
		0 & 0 & 0 & 0 & 1 & 0 
		\\
		0 & 0 & 0 & 0 & 0 & 0 
		\\
		0 & 0 & 0 & 0 & 0 & 0 
		\\
		0 & 0 & 0 & 0 & 0 & 0 
		\\
		0 & 0 & 0 & 0 & 0 & 0 
	\end{psmallmatrix},\quad\quad
	\mathcal{M}_\mathbb{B}\big(\ad_{e_2}\big)=\begin{psmallmatrix}
		0 & 0 & 0 & 0 & 1 & 0 
		\\
		0 & 0 & 0 & 0 & 0 & 0 
		\\
		0 & 0 & 0 & 0 & 0 & 0 
		\\
		0 & 0 & 0 & 0 & 0 & 0 
		\\
		0 & 0 & 0 & 0 & 0 & 0 
		\\
		0 & 0 & 0 & 0 & 0 & 0 
	\end{psmallmatrix},\\ &&\mathcal{M}_\mathbb{B}\big(\ad_{e_3}\big)=\begin{psmallmatrix}
		0 & 0 & 0 & 0 & 0 & 0 
		\\
		0 & 0 & 0 & 0 & 0 & 0 
		\\
		0 & 0 & 0 & 0 & 0 & 0 
		\\
		0 & 0 & 0 & 0 & 1 & 0 
		\\
		0 & 0 & 0 & 0 & 0 & 0 
		\\
		0 & 0 & 0 & 0 & 0 & 0 
	\end{psmallmatrix},\quad\quad
	\mathcal{M}_\mathbb{B}\big(\ad_{e_4}\big)=\begin{psmallmatrix}
		0 & 0 & 0 & 0 & 0 & 0 
		\\
		0 & 0 & 0 & 0 & 0 & 0 
		\\
		0 & 0 & 0 & 0 & -1 & 0 
		\\
		0 & 0 & 0 & 0 & 0 & 0 
		\\
		0 & 0 & 0 & 0 & 0 & 0 
		\\
		0 & 0 & 0 & 0 & 0 & 0 
	\end{psmallmatrix},\\
	&&\mathcal{M}_\mathbb{B}(\ad_{e_5})=\begin{psmallmatrix}
		0 & -1 & 0 & 0 & 0 & 0 
		\\
		-1 & 0 & 0 & 0 & 0 & 0 
		\\
		0 & 0 & 0 & 1 & 0 & 0 
		\\
		0 & 0 & -1 & 0 & 0 & 0 
		\\
		0 & 0 & 0 & 0 & 0 & 0 
		\\
		0 & 0 & 0 & 0 & 0 & 0 
	\end{psmallmatrix},\et
	\mathcal{M}_\mathbb{B}(\ad_{e_6})=\begin{psmallmatrix}
		0 & 0 & 0 & 0 & 0 & 0 
		\\
		0 & 0 & 0 & 0 & 0 & 0 
		\\
		0 & 0 & 0 & 0 & 0 & 0 
		\\
		0 & 0 & 0 & 0 & 0 & 0 
		\\
		0 & 0 & 0 & 0 & 0 & 0 
		\\
		0 & 0 & 0 & 0 & 0 & 0 
	\end{psmallmatrix}.
\end{eqnarray*}
Thus, $\mathfrak{r}_6$ is unimodular. On the other hand, one can easily check that $\w:=\sum_{j=0}^2e^*_{2j+1}\wedge e^*_{2(j+1)}$, is a symplectic form on $\mathfrak{r}_6$. Further, a straightforward computation yields that:
\begin{equation*}
	\kappa^{\mathfrak{r}_6}(x,y)=0,\qquad\forall\,x,y\in\mathfrak{r}_6.
\end{equation*}
As a result, using Corollary \ref{UniLiGrp}, we get that the simply connected symplectic Lie group $(\widetilde{\mathcal{R}}_6,\Omega)$ corresponding to $(\mathfrak{r}_6,\w)$ is Ricci-flat.
\end{example}

Clearly that for a symplectic solvmanifold $(\mathcal{R}/\Gamma,\Omega)$, the natural symplectic connection $\ns$ is associated to the product \eqref{PrdLiGrp}, where $\la:\mathfrak{r}\times\mathfrak{r}\to\mathfrak{r}$ is the canonical left symmetric product on $(\mathfrak{r},\omega)$. Moreover, the curvature and the Ricci curvature tensors of the natural symplectic connection $\ns$ are given respectively by \eqref{KsLi} and \eqref{RisLi}. The following proposition is a direct consequence of Corollary \ref{UniLiGrp}.

\begin{proposition}\label{NilRicFlt}
The natural symplectic connection $\ns$ of a symplectic nilmanifold $(N/\Gamma,\Omega)$ is Ricci-flat.
\end{proposition}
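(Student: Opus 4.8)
The plan is to deduce Proposition \ref{NilRicFlt} as an immediate specialization of the symplectic Lie group formulas established just above. First I would recall that a symplectic nilmanifold $(N/\Gamma,\Omega)$ is by definition a symplectic solvmanifold with $N$ nilpotent and $\Gamma$ discrete, so the relevant infinitesimal data are the nilpotent Lie algebra $\nlp$ of $N$, equipped with the $\Gamma$-invariant $2$-cocycle symplectic tensor $\omega$ induced by $\Omega$; here $\m=\nlp$, $\h=\{0\}$, and hence $\lm_x=\ad_x$ and $D_{x,y}=0$ for all $x,y$. As observed in the text preceding the proposition, the natural symplectic connection $\ns$ on $(N/\Gamma,\Omega)$ is then given by the product \eqref{PrdLiGrp}, and its Ricci curvature is computed by the symplectic Lie group formula \eqref{RisLi}, namely
\begin{equation*}
\ric^\mathbf{s}(x,y)=\tfrac{1}{9}\Big\{\kappa_\nlp(x,y)+\omega\big(x,\ad_{u^\nlp}(y)\big)\Big\},\qquad x,y\in\nlp,
\end{equation*}
where $\kappa_\nlp$ is the Killing form of $\nlp$ and $u^\nlp$ is the unique vector with $\omega(u^\nlp,x)=\tr(\ad_x)$.

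Next I would invoke the two classical facts about nilpotent Lie algebras that make both terms vanish. Since $\nlp$ is nilpotent, every $\ad_x$ is a nilpotent endomorphism, so $\tr(\ad_x)=0$ for all $x\in\nlp$; by nondegeneracy of $\omega$ this forces $u^\nlp=0$, killing the second term $\omega(x,\ad_{u^\nlp}(y))$. For the first term, nilpotency of $\nlp$ gives that $\ad_x\ro\ad_y$ is again nilpotent for all $x,y$ (indeed $\nlp$ being nilpotent, Engel-type arguments show the whole associative algebra generated by the $\ad$'s is nilpotent), whence $\kappa_\nlp(x,y)=\tr(\ad_x\ro\ad_y)=0$ identically. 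Therefore $\ric^\mathbf{s}\equiv 0$, i.e. $\ns$ is Ricci-flat.

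Alternatively, and even more directly, I would simply point out that a nilpotent Lie algebra is unimodular (again because $\tr(\ad_x)=0$), so $(N,\pi_N^*\Omega)$ is a symplectic unimodular Lie group, and Corollary \ref{UniLiGrp} reduces the claim to the vanishing of the Killing form of $\nlp$, which is the standard fact quoted above; then one transports Ricci-flatness from $N$ to the quotient $N/\Gamma$ using that $\pi_N:(N,\pi_N^*\Omega)\to(N/\Gamma,\Omega)$ is a local symplectomorphism intertwining the two copies of $\ns$ (both being defined by \eqref{PrdLiGrp} on the common Lie algebra $\nlp$). I do not anticipate a genuine obstacle here: the only point requiring a word of care is the justification that $\kappa_\nlp=0$ for nilpotent $\nlp$, which is entirely classical, so the proof is essentially a two-line citation of \eqref{RisLi} (or Corollary \ref{UniLiGrp}) together with nilpotency; the main thing to get right is bookkeeping the identification $\m=\nlp$, $\lm=\ad$, $D_{x,y}=0$ so that the symplectic Lie group specialization genuinely applies.
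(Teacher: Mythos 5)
Your proposal is correct and follows essentially the same route as the paper: the authors state the proposition as a direct consequence of Corollary \ref{UniLiGrp} (equivalently, of formula \eqref{RisLi} specialized to the nilpotent Lie algebra $\nlp$), using exactly the classical facts that a nilpotent Lie algebra is unimodular and has vanishing Killing form. Your bookkeeping of the identifications $\m=\nlp$, $\h=\{0\}$, $\lm=\ad$, $D_{x,y}=0$ is the same specialization the paper performs in the paragraph preceding the proposition.
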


This section concludes by proving our third main result. Following this, we present some explicit examples.

\begin{proof}[\textnormal{\textbf{Proof of Theorem \ref{ThmSympNilman}}}]
According to Theorem \ref{Theo1Flat1}, we have $(G/H,\Omega)=(G_\m/\Gamma,\Omega)$ with $G_\m$ is a connected Lie group and $\Gamma$ a discrete Lie subgroup of $G_\m$. On the other hand, since $\n^\mathbf{s}$ is flat, it follows that the Lie algebra $(\m,\w)$ of $G_\m$ endowed with $\w$ is a flat symplectic Lie algebra (cf. \cite{Oili}). Therefore, by using \cite[p. $4390$]{Oili}, we deduce that $\m$ must be a nilpotent Lie algebra, and hence the statement is proven.
\end{proof}

\begin{example}\label{rriF1}
Consider the following simply connected Lie group
\begin{equation*}
\widetilde{N}:=\Bigg\{\begin{psmallmatrix}
e^t&0&0&0\\
0&1&a&b\\
0&0&1&c\\
0&0&0&1\\
\end{psmallmatrix}\,\Big|\,\,(a,b,c,t)\in\rel^4\Bigg\}.
\end{equation*}
Let $\Gamma$ be the Lie subgroup of $\widetilde{N}$ defined by: 
\begin{equation*}
\Gamma:=\Bigg\{\begin{psmallmatrix}
e^{n_1}&0&0&0\\
0&1&0&n_2\\
0&0&1&n_3\\
0&0&0&1\\
\end{psmallmatrix}\,\Big|\,\,(n_1,n_2,n_3)\in\intg^3\Bigg\}.
\end{equation*}
Clearly that $\Gamma$ is a discrete Lie subgroup of $\widetilde{N}$. Moreover, one can easily see that the Lie algebra $\nl$ of $\widetilde{N}$ is:
\begin{equation*}
\nl=\Bigg\{\begin{psmallmatrix}
t&0&0&0\\
0&0&x&y\\
0&0&0&z\\
0&0&0&0\\
\end{psmallmatrix}\,\Big|\,\,(x,y,z,t)\in\rel^4\Bigg\}.
\end{equation*}
Choose the following basis of $\nl$:
\begin{equation*}
e_1:=\begin{psmallmatrix}
1&0&0&0\\
0&0&0&0\\
0&0&0&0\\
0&0&0&0\\
\end{psmallmatrix},\quad e_2:=\begin{psmallmatrix}
0&0&0&0\\
0&0&1&0\\
0&0&0&0\\
0&0&0&0\\
\end{psmallmatrix},\quad e_3:=\begin{psmallmatrix}
0&0&0&0\\
0&0&0&1\\
0&0&0&0\\
0&0&0&0\\
\end{psmallmatrix},\et e_4:=\begin{psmallmatrix}
0&0&0&0\\
0&0&0&0\\
0&0&0&1\\
0&0&0&0\\
\end{psmallmatrix}.
\end{equation*}
Since $[e_2,e_4]=e_3$ is the only nonzero Lie bracket, we can obviously see that $\nl$ is $2$-step nilpotent and $\mathfrak{z}(\nl)=\Span_\rel\{e_1,e_3\}$. Moreover, define a symplectic tensor on $\nl$ by:
\begin{equation*}
\w:=e^*_1\wedge e^*_4+e^*_2\wedge e^*_3.
\end{equation*}
A direct computation shows that $\w$ is a $2$-cocycle of $\nl$. Further, for each $\gamma:=\begin{psmallmatrix}
e^{n_1}&0&0&0\\
0&1&0&n_2\\
0&0&1&n_3\\
0&0&0&1\\
\end{psmallmatrix}\in\Gamma$, we have:
\begin{equation*}
\Ad_\gamma(e_1)=e_1,\quad\Ad_\gamma(e_2)=e_2-n_3e_3,\quad\Ad_\gamma(e_3)=e_3,\et\Ad_\gamma(e_4)=e_4.
\end{equation*}
Thus, $\w$ is $\Gamma$-invariant, and therefore it gives rise to a unique $\widetilde{N}$-invariant symplectic form $\Omega$ on the nilmanifold $\widetilde{N}/\Gamma\cong\mathbb{T}^3\times\rel$. Furthermore, by a straightforward verification one can check that $\mathfrak{z}(\nl)$ is Lagrangian in $(\nl,\w)$. In particular, the natural symplectic connection $\n^\mathbf{s}$ is flat (cf. \cite[p. $4384$]{Oili}).
\end{example}

As indicated in Proposition \ref{NilRicFlt}, the natural symplectic connection $\ns$ of any symplectic nilmanifold is Ricci-flat. Nevertheless, it is essential to note that the natural symplectic connection is not necessarily flat, as demonstrated by the following example.

\begin{example}\label{rriF2}
Consider the following simply connected Lie group
\begin{equation*}
\widetilde{N}:=\Bigg\{\begin{psmallmatrix}
1&t&\tfrac{t^2}{2}&a\\
0&1&t&b\\
0&0&1&c\\
0&0&0&1\\
\end{psmallmatrix}\,\Big|\,\,(a,b,c,t)\in\rel^4\Bigg\}.
\end{equation*}
Let $\Gamma$ be the discrete Lie subgroup of $\widetilde{N}$ defined by: 
\begin{equation*}
\Gamma:=\Bigg\{\begin{psmallmatrix}
1&0&0&n_1\\
0&1&0&n_2\\
0&0&1&0\\
0&0&0&1\\
\end{psmallmatrix}\,\Big|\,\,(n_1,n_2)\in\intg^2\Bigg\}.
\end{equation*}
Clearly that the Lie algebra $\nl$ of $\widetilde{N}$ is:
\begin{equation*}
\nl=\Bigg\{\begin{psmallmatrix}
0&t&0&x\\
0&0&t&y\\
0&0&0&z\\
0&0&0&0\\
\end{psmallmatrix}\,\Big|\,\,(x,y,z,t)\in\rel^4\Bigg\}.
\end{equation*}
Choose the following basis of $\nl$:
\begin{equation*}
e_1:=\begin{psmallmatrix}
0&0&0&-1\\
0&0&0&0\\
0&0&0&0\\
0&0&0&0\\
\end{psmallmatrix},\quad e_2:=\begin{psmallmatrix}
0&0&0&0\\
0&0&0&1\\
0&0&0&0\\
0&0&0&0\\
\end{psmallmatrix},\quad e_3:=\begin{psmallmatrix}
0&0&0&0\\
0&0&0&0\\
0&0&0&-1\\
0&0&0&0\\
\end{psmallmatrix},\et e_4:=\begin{psmallmatrix}
0&1&0&0\\
0&0&1&0\\
0&0&0&0\\
0&0&0&0\\
\end{psmallmatrix}.
\end{equation*}
The only nonzero Lie brackets are $[e_2,e_4]=e_1$, and $[e_3,e_4]=e_2$. In particular, $\nl$ is a $3$-step nilpotent Lie algebra. Define a symplectic tensor on $\nl$ by:
\begin{equation*}
\w:=e^*_1\wedge e^*_4+e^*_2\wedge e^*_3.
\end{equation*}
A direct computation shows that $\w$ is a $2$-cocycle of $\nl$. Moreover, for each $\gamma:=\begin{psmallmatrix}
1&0&0&n_1\\
0&1&0&n_2\\
0&0&1&0\\
0&0&0&1\\
\end{psmallmatrix}\in\Gamma$, we have:
\begin{equation*}
\Ad_\gamma(e_1)=e_1,\quad\Ad_\gamma(e_2)=e_2,\quad\Ad_\gamma(e_3)=e_3,\et\Ad_\gamma(e_4)=e_4+n_2e_1.
\end{equation*}
Hence, $\w$ is $\Gamma$-invariant, and therefore it gives rise to a unique $\widetilde{N}$-invariant symplectic form $\Omega$ on the nilmanifold $\widetilde{N}/\Gamma\cong\mathbb{T}^2\times\rel^2$. On the other hand, since $\{e_i\}_{1\leqslant i\leqslant 4}$ is a symplectic basis of $(\nl,\w)$, one can easily check that 
\begin{equation*}
\la_{e_2}(e_4)=\la_{e_3}(e_3)=e_1,\quad \la_{e_4}(e_3)=-e_2,\et\la_{e_4}(e_4)=e_3,
\end{equation*}
are the only nonzero elements of the product $\la$. Thus, we compute:  
\begin{eqnarray*}
K^\mathbf{s}(e_3,e_4)e_4&=&\tfrac{1}{9}\ra_{[e_3,e_4]}(e_4)-\tfrac{1}{9}\left[\ra_{e_3},\ra_{e_4}\right](e_4)-\tfrac{2}{9}\la_{[e_3,e_4]}(e_4)\\
&=&\tfrac{1}{9}\ra_{e_2}(e_4)-\tfrac{1}{9}\ra_{e_3}\ro\ra_{e_4}(e_4)+\tfrac{1}{9}\ra_{e_4}\ro\ra_{e_3}(e_4)-\tfrac{2}{9}\la_{e_2}(e_4)\\
&=&\tfrac{1}{9}\la_{e_4}(e_2)-\tfrac{1}{9}\ra_{e_3}\ro\la_{e_4}(e_4)+\tfrac{1}{9}\ra_{e_4}\ro\la_{e_4}(e_3)-\tfrac{2}{9}e_1\\
&=&-\tfrac{1}{9}\ra_{e_3}(e_3)-\tfrac{1}{9}\ra_{e_4}(e_2)-\tfrac{2}{9}e_1\\
&=&-\tfrac{1}{9}\la_{e_3}(e_3)-\tfrac{1}{9}\la_{e_2}(e_4)-\tfrac{2}{9}e_1\\
&=&-\tfrac{4}{9}e_1.
\end{eqnarray*}
This shows that $\n^\mathbf{s}$ is not flat.
\end{example}

\section{Symplectic homogeneous spaces with compact Lie group}\label{Section4}

Throughout this section, we assume that $G$ is compact. Let $\met$ be an $\Ad(G)$-invariant inner product on $\mathfrak{g}$, and $\mathfrak{g}=\mathfrak{h}\oplus\mathfrak{m}$ the natural reductive decomposition of $\mathfrak{g}$ with respect to $\met$, i.e., $\mathfrak{m}:=\mathfrak{h}^\perp$.
Using the fact that $\met$ is $\Ad(G)$-invariant and $\mathfrak{m}$ is the orthogonal complement of $\h$ with respect to $\met$, one can easily see that for any $x,y,z\in\mathfrak{m}$, we have:
\begin{equation}\label{admSkew}
\langle \lm_x(y),z\rangle =-\langle y,\lm_x(z)\rangle .
\end{equation}
In particular, $u^\mathfrak{m}=0$. Define a linear isomorphism $\F:\mathfrak{m}\to\mathfrak{m}$ of $\mathfrak{m}$ by:
\begin{equation}\label{W}
\omega(x,y)=\langle \F(x),y\rangle ,
\end{equation}
for $x,y\in\mathfrak{m}$. The following proposition summarize all the properties of $\F$.
\begin{proposition}\label{Cpt}
For any $x,y,z\in\mathfrak{m}$, we have:
\begin{itemize}
\item[$1.$] $\langle \F(x),y\rangle =-\langle  x,\F(y)\rangle $;
\item[$2.$] $\F\ro\Ad_h=\Ad_h\ro\F$, for all $h\in H$;
\item[$3.$] $\big[\F,\lm_x\big]=\lm_{\F(x)}$;
\item[$4.$] $\la_x=\F^{-1}\ro\lm_x\ro\F$;
\item[$5.$] $D^\aj_{x,y}=\F^{-1}\ro\,D_{y,x}\ro\F$.
\end{itemize}
\end{proposition}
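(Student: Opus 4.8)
The plan is to dispose of items $1$ and $2$ by formal manipulation, to reduce item $3$ to the $2$-cocycle identity \eqref{Cocy}, and to derive items $4$ and $5$ from a single remark comparing the $\w$-adjoint with the $\met$-transpose.

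First, items $1$ and $2$. Item $1$ is just the skew-symmetry of $\w$ read through $\F$: $\langle\F(x),y\rangle=\w(x,y)=-\w(y,x)=-\langle\F(y),x\rangle=-\langle x,\F(y)\rangle$, using the symmetry of $\met$. For item $2$, I first note that since $\met$ is $\Ad(G)$-invariant and $\m=\h^\perp$, each $\Ad_h$ with $h\in H$ preserves $\m$; then for $x,y\in\m$ I compute $\langle\F(\Ad_h x),y\rangle=\w(\Ad_h x,y)\stackrel{\eqref{OmgSmlInv}}{=}\w(x,\Ad_{h^{-1}}y)=\langle\F(x),\Ad_{h^{-1}}y\rangle=\langle\Ad_h\F(x),y\rangle$, and conclude by letting $y$ run over $\m$.

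For item $3$ I would pair $[\F,\lm_x](y)-\lm_{\F(x)}(y)$ with an arbitrary $z\in\m$. Using the definition of $\F$, the $\Ad(G)$-invariance of $\met$ on all of $\g$ (in the two equivalent forms $\langle[u,v],w\rangle=-\langle v,[u,w]\rangle$ and $\langle[u,v],w\rangle=\langle u,[v,w]\rangle$), and the freedom to project brackets onto $\m$ inside $\met$ (since $\m\perp\h$), one obtains
\begin{equation*}
\langle\F(\lm_x(y))-\lm_x(\F(y)),z\rangle=\w([x,y]_\m,z)+\w(y,[x,z]_\m)\et\langle\lm_{\F(x)}(y),z\rangle=\w(x,[y,z]_\m).
\end{equation*}
Requiring equality of these for all $z\in\m$ amounts, after rewriting $\w(y,[x,z]_\m)=\w([z,x]_\m,y)$ and $\w(x,[y,z]_\m)=-\w([y,z]_\m,x)$, to $\w([x,y]_\m,z)+\w([z,x]_\m,y)+\w([y,z]_\m,x)=0$, which is exactly \eqref{Cocy}, valid because $\w$ comes from a $G$-invariant symplectic form.

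For items $4$ and $5$ the clean device is to record that for any $F\in\End(\m)$ the $\w$-adjoint $F^\aj$ and the $\met$-transpose $F^*$ are conjugate via $\F$, namely $F^\aj=\F^{-1}\ro F^*\ro\F$; indeed $\w(F^\aj(y),z)=\w(y,F(z))=\langle\F(y),F(z)\rangle=\langle F^*\F(y),z\rangle$, while also $\w(F^\aj(y),z)=\langle\F F^\aj(y),z\rangle$. Item $4$ then follows at once, since $\lm_x$ is $\met$-skew by \eqref{admSkew} and $\la_x=-(\lm_x)^\aj$ (as used in the proof of Lemma \ref{tL,tR}), so $\la_x=-\F^{-1}\ro(\lm_x)^*\ro\F=\F^{-1}\ro\lm_x\ro\F$. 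For item $5$ it remains to identify $(D_{x,y})^*=D_{y,x}$: for $z,w\in\m$, using $D_{x,y}(z)=[[x,z]_\h,y]$, the $\Ad(G)$-invariance of $\met$, and $\h\perp\m$,
\begin{equation*}
\langle D_{x,y}(z),w\rangle=\langle[x,z]_\h,[y,w]_\h\rangle=\langle z,[[y,w]_\h,x]\rangle=\langle z,D_{y,x}(w)\rangle,
\end{equation*}
whence $D^\aj_{x,y}=\F^{-1}\ro(D_{x,y})^*\ro\F=\F^{-1}\ro D_{y,x}\ro\F$.

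The only step that uses genuine structure rather than formal bookkeeping is item $3$, where the $2$-cocycle condition on $\w$ is precisely what makes the identity work, so I expect that to be the crux. The other point calling for care is item $5$: one must keep careful track of which brackets land in $\h$ and which in $\m$ when moving terms across $\met$, but once the projections are handled the $\Ad(G)$-invariance of $\met$ does the rest.
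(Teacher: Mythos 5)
Your proof is correct and follows essentially the same route as the paper's (which merely states that items 1--2 are obvious, item 3 follows from \eqref{Cocy} and \eqref{admSkew}, item 4 from \eqref{admSkew}, and item 5 from the identity $\langle D_{x,y}(z),z'\rangle=\langle z,D_{y,x}(z')\rangle$); you simply supply the omitted computations, and your observation that $F^{\divideontimes}=\F^{-1}\ro F^{*}\ro\F$ is a clean way to package items 4 and 5 that is implicit in the paper's argument.
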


\begin{proof}
The first and the second one are obvious. The third equality follows from \eqref{Cocy} and \eqref{admSkew}. The fourth property also follows from \eqref{admSkew}. For the last one, a small computation shows that for any $x,y,z,z'\in\mathfrak{m}$, we have:
\begin{equation*}
\langle D_{x,y}(z),z'\rangle =\langle z,D_{y,x}(z')\rangle .
\end{equation*}
Hence, the last equality follows.
\end{proof}

In this case, the formula \eqref{Ka} of the curvature of the Zero-One connection $\nc$ became 
\begin{equation*}
\Kc(x,y)=\F^{-1}\ro\left(D_{y,x}-\,D_{x,y}\right)\ro\F,\qquad\forall\,x,y\in\mathfrak{m}.
\end{equation*}
Further, we have the following:

\begin{proposition}
The Ricci curvature of $\nc$ is symmetric and it is given by
\begin{equation}\label{RicACpt}
\ric^{0,1}(x,y)=2\tr\left(\lm_x\ro\F^{-1}\ro\lm_{\F(y)}\right)-\tr(D_{x,y}).
\end{equation}
\end{proposition}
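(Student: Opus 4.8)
The plan is to start from the general Ricci formula \eqref{RicA} for the Zero-One connection $\nc$ and specialize it using the extra structure available in the compact case, namely $u^\mathfrak{m}=0$ together with the identities collected in Proposition \ref{Cpt}. Since $u^\mathfrak{m}=0$, the third term $2\,\omega\left(\lm_{u^\mathfrak{m}}(x),y\right)$ in \eqref{RicA} vanishes outright, so what remains to be shown is that $2\tr\left(\lm_x\ro\lm_y\right)+2\tr\left(\lm_x\ro\left(\lm_y\right)^\aj\right)$ equals $2\tr\left(\lm_x\ro\F^{-1}\ro\lm_{\F(y)}\right)$. For the symmetry claim, I would invoke Corollary \ref{SymRicA}: in the compact case $\mathfrak{g}$ is unimodular (it carries an $\Ad(G)$-invariant inner product), hence $\tr\left(\ad_{[x,y]_\mathfrak{m}}\right)=0$ for all $x,y\in\mathfrak{m}$, and so $\ric^{0,1}$ is symmetric.

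First I would rewrite the two trace terms. From Proposition \ref{Cpt}(4) we have $\la_x=\F^{-1}\ro\lm_x\ro\F$, so that $\left(\lm_y\right)^\aj = -\la_y$ (using $\la_x=-\left(\lm_x\right)^\aj$ from Lemma \ref{tL,tR}); combining these, $\left(\lm_y\right)^\aj = -\F^{-1}\ro\lm_y\ro\F$. Then
\[
\tr\left(\lm_x\ro\left(\lm_y\right)^\aj\right) = -\tr\left(\lm_x\ro\F^{-1}\ro\lm_y\ro\F\right) = -\tr\left(\F\ro\lm_x\ro\F^{-1}\ro\lm_y\right),
\]
using cyclicity of the trace. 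Now I would use Proposition \ref{Cpt}(3), $\big[\F,\lm_x\big]=\lm_{\F(x)}$, which rearranges to $\F\ro\lm_x = \lm_x\ro\F + \lm_{\F(x)}$, hence $\F\ro\lm_x\ro\F^{-1} = \lm_x + \lm_{\F(x)}\ro\F^{-1}$. Substituting,
\[
\tr\left(\lm_x\ro\left(\lm_y\right)^\aj\right) = -\tr\left(\lm_x\ro\lm_y\right) - \tr\left(\lm_{\F(x)}\ro\F^{-1}\ro\lm_y\right).
\]
Therefore $\tr\left(\lm_x\ro\lm_y\right)+\tr\left(\lm_x\ro\left(\lm_y\right)^\aj\right) = -\tr\left(\lm_{\F(x)}\ro\F^{-1}\ro\lm_y\right)$, and by cyclicity this equals $-\tr\left(\F^{-1}\ro\lm_y\ro\lm_{\F(x)}\right)$. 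I then need to match this against $\tr\left(\lm_x\ro\F^{-1}\ro\lm_{\F(y)}\right)$; using cyclicity the target is $\tr\left(\F^{-1}\ro\lm_{\F(y)}\ro\lm_x\right)$, and the two agree after swapping the roles of $x$ and $y$ and using that $\ric^{0,1}$ is symmetric (which we established independently). Alternatively one verifies the identity $\tr\left(\lm_{\F(x)}\ro\F^{-1}\ro\lm_y\right)=\tr\left(\lm_x\ro\F^{-1}\ro\lm_{\F(y)}\right)$ directly by expanding $\F^{-1}$ against $\omega$ via \eqref{W} and using the skew-symmetry \eqref{admSkew} of $\lm$; this is the cleanest route and avoids any appeal to the symmetry of $\ric^{0,1}$.

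The main obstacle is bookkeeping: keeping track of signs when passing between $\lm$, $\la$, the $\omega$-adjoint $(\cdot)^\aj$, and $\F$, and making sure the commutator identity Proposition \ref{Cpt}(3) is applied in the right order inside a trace. There is a genuine subtlety in the final matching step — one must decide whether to invoke the already-proven symmetry of $\ric^{0,1}$ or to prove the bare trace identity $\tr\left(\lm_{\F(x)}\ro\F^{-1}\ro\lm_y\right)=\tr\left(\lm_x\ro\F^{-1}\ro\lm_{\F(y)}\right)$ head-on. I would favor the latter: writing $\tr\left(\lm_{\F(x)}\ro\F^{-1}\ro\lm_y\right)=-\tr\left(\F^{-1}\ro\lm_y\ro\lm_{\F(x)}\right)$ and expanding in an orthonormal basis $\{e_j\}$ of $\mathfrak{m}$, each side becomes a sum $\sum_j \omega\big(e_j, \lm_y\lm_{\F(x)}(e_j)\big)$ versus $\sum_j \omega\big(\F(y), \cdots\big)$, and the identity reduces to the cocycle relation \eqref{Cocy} combined with \eqref{admSkew} and Proposition \ref{Cpt}(1). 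Once this identity is in hand, the formula \eqref{RicACpt} follows, and the symmetry statement is immediate from Corollary \ref{SymRicA} together with unimodularity of compact $\mathfrak{g}$.
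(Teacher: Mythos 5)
Your overall strategy --- start from \eqref{RicA}, observe that $u^\mathfrak{m}=0$ kills the middle term, get symmetry from Corollary \ref{SymRicA} via unimodularity of $\mathfrak{g}$, and rewrite $\left(\lm_y\right)^\aj=-\F^{-1}\ro\lm_y\ro\F$ using Proposition \ref{Cpt} --- is exactly the paper's. But your closing step contains a genuine sign error. Because you commute $\F$ past $\lm_x$ rather than past $\lm_y$, you correctly reach
\begin{equation*}
\tr\left(\lm_x\ro\lm_y\right)+\tr\left(\lm_x\ro\left(\lm_y\right)^\aj\right)=-\tr\left(\lm_{\F(x)}\ro\F^{-1}\ro\lm_y\right),
\end{equation*}
and to conclude you would need $\tr\left(\lm_{\F(x)}\ro\F^{-1}\ro\lm_y\right)=-\tr\left(\lm_x\ro\F^{-1}\ro\lm_{\F(y)}\right)$. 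The identity you propose to verify ``head-on'' omits that minus sign, and as stated it is false: setting $U(a,b):=\tr\left(\lm_a\ro\F^{-1}\ro\lm_b\right)$, one checks using $(\lm_a)^\aj=-\F^{-1}\ro\lm_a\ro\F$, $(\F^{-1})^\aj=-\F^{-1}$ and $\tr(F)=\tr(F^\aj)$ that $U$ is \emph{antisymmetric}, whence $\tr\left(\lm_{\F(x)}\ro\F^{-1}\ro\lm_y\right)=U(\F(x),y)=-U(y,\F(x))$; if your unsigned identity held, the right-hand side of \eqref{RicACpt} would be forced to reduce to $-\tr(D_{x,y})$, contradicting the explicit computation \eqref{RicSPartII} for $\SU(3)/\mathbb{T}^2$. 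Your first justification (swap $x$ and $y$ and invoke symmetry of $\ric^{0,1}$) is also incomplete for the same reason: symmetry of $\ric^{0,1}$ only gives $U(\F(x),y)=U(\F(y),x)$, and you still need the antisymmetry of $U$ to convert $-U(\F(y),x)$ into $U(x,\F(y))=\tr\left(\lm_x\ro\F^{-1}\ro\lm_{\F(y)}\right)$; that unproven antisymmetry is precisely where your missing sign lives.

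The repair is one line, and it is what the paper does: apply Proposition \ref{Cpt}(3) to $\lm_y$ instead of $\lm_x$, i.e.\ write $\F^{-1}\ro\lm_y\ro\F=\lm_y-\F^{-1}\ro\lm_{\F(y)}$, so that $\lm_x\ro\left(\lm_y\right)^\aj=\lm_x\ro\F^{-1}\ro\lm_{\F(y)}-\lm_x\ro\lm_y$ as an operator identity before any traces are taken; substituting into \eqref{RicA} with $u^\mathfrak{m}=0$ then yields \eqref{RicACpt} directly, with no auxiliary trace identity needed.
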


\begin{proof}
The fact that $\nc$ has symmetric Ricci curvature follows from Corollary \ref{SymRicA}, since $\mathfrak{g}$ is unimodular. Moreover, by Proposition \ref{Cpt}, we have:
\begin{eqnarray*}
\lm_x\ro\left(\lm_y\right)^\aj&=&-\lm_x\ro\F^{-1}\ro\lm_y\ro\F\\
&=&\lm_x\ro\F^{-1}\ro\lm_{\F(y)}-\lm_x\ro\lm_y.
\end{eqnarray*}
Hence, the equality \eqref{RicACpt} follows by substituting in \eqref{RicA}.
\end{proof}

Combining \eqref{RicS} with \eqref{RicACpt} we get:
\begin{proposition}
The Ricci curvature of the natural symplectic connection $\ns$ is given by
\begin{equation}\label{RicSCpt}
\ric^\mathbf{s}(x,y)=\tfrac{1}{9}\tr\left(\lm_x\ro\lm_y\right)+\tfrac{4}{9}\tr\left(\lm_x\ro\F^{-1}\ro\lm_{\F(y)}\right)-\tr\left(D_{x,y}\right).
\end{equation}
\end{proposition}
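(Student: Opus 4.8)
The plan is to derive \eqref{RicSCpt} purely algebraically, by substituting the compact-case identity \eqref{RicACpt} for $\ric^{0,1}$ into the general Ricci formula \eqref{RicS} and then simplifying with the two special features of the natural reductive decomposition $\mathfrak{g}=\mathfrak{h}\oplus\mathfrak{m}$, $\mathfrak{m}=\mathfrak{h}^\perp$.

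First I would record the consequences of $u^\mathfrak{m}=0$. On one hand, the term $\omega\left(x,\lm_{u^\mathfrak{m}}(y)\right)$ appearing in \eqref{RicS} vanishes identically. On the other hand, the last equality of Lemma~\ref{LemLY3} gives $\tr(D_{x,y})-\tr(D_{y,x})=\omega\left(u^\mathfrak{m},\lm_x(y)\right)=0$, so that $\tr(D_{y,x})=\tr(D_{x,y})$ for all $x,y\in\mathfrak{m}$. Hence in the compact case every $D$-trace occurring in \eqref{RicS} — the explicit $\tr(D_{y,x})$ and $\tfrac{5}{2}\tr(D_{x,y})$ inside the braces, together with the $-\tr(D_{x,y})$ carried by $\ric^{0,1}$ through \eqref{RicACpt} — reduces to a multiple of the single quantity $\tr(D_{x,y})$.

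It then remains to substitute $\ric^{0,1}(x,y)=2\tr\left(\lm_x\ro\F^{-1}\ro\lm_{\F(y)}\right)-\tr(D_{x,y})$ into \eqref{RicS} and collect terms: the $\F^{-1}$-term acquires the factor $\tfrac{2}{9}\cdot 2=\tfrac{4}{9}$, while the coefficient of $\tr(D_{x,y})$ equals $\tfrac{2}{9}\left(-1-1-\tfrac{5}{2}\right)=\tfrac{2}{9}\cdot\left(-\tfrac{9}{2}\right)=-1$, which is exactly \eqref{RicSCpt}. I do not expect any genuine obstacle here; the only point requiring care is the bookkeeping of the several $\tr(D_{x,y})$ contributions, so isolating the identity $\tr(D_{y,x})=\tr(D_{x,y})$ from Lemma~\ref{LemLY3} at the outset is the cleanest route.
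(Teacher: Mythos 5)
Your proposal is correct and follows essentially the same route as the paper, which simply states that \eqref{RicSCpt} is obtained by combining \eqref{RicS} with \eqref{RicACpt}; your substitution and bookkeeping (using $u^\mathfrak{m}=0$ to kill the $\omega\left(x,\lm_{u^\mathfrak{m}}(y)\right)$ term and to get $\tr(D_{y,x})=\tr(D_{x,y})$ via Lemma~\ref{LemLY3}, then collecting $\tfrac{2}{9}\left(-1-1-\tfrac{5}{2}\right)=-1$) is exactly the intended computation.
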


Note that, the second property in Proposition \ref{Cpt} shows that the linear endomorphism $\F\in\End(\mathfrak{m})$ gives rise to a $G$-invariant $(1,1)$-tensor field (also denoted by $\F$) on $G/H$. We end this section by showing that this tensor field is not parallel with respect to the natural symplectic connection.

\begin{proposition}
The $(1,1)$-tensor field $\F$ on $G/H$ is not parallel with respect to the natural symplectic connection $\ns$ of $G/H$.
\end{proposition}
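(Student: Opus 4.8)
The plan is to compute the covariant derivative $\ns\F$ at the origin $H\in G/H$ and to show that its vanishing would force $G/H$ to be locally symmetric, contradicting the standing hypothesis.

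Since $\F$ is $G$-invariant, the same reasoning used for $\Omega$ in the computation preceding \eqref{NGinv} shows that $\ns\F$ is again $G$-invariant and that, under the identification $T_H(G/H)\cong\m$, it is given at the origin by
\[
(\ns_x\F)(y)=\ls_x\big(\F(y)\big)-\F\big(\ls_x(y)\big)=[\ls_x,\F](y),\qquad x,y\in\m,
\]
the point being that $\mathcal{L}_{x^\#}\F=0$ (because $\F$ is invariant under the flow of $x^\#$) while $\A^{\ns}_{x^\#}$ is an endomorphism field whose value at $H$ is the endomorphism $\ls_x$ of $\m$. Hence $\F$ is parallel for $\ns$ if and only if $[\ls_x,\F]=0$ for every $x\in\m$.

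The next step is the algebraic identity
\[
[\ls_x,\F]=-\ls_{\F(x)},\qquad x\in\m.
\]
Starting from $\ls_x=\tfrac13\big(\lm_x-(\lm_x)^\aj\big)$ (Proposition \ref{L,R}), and using $(\lm_x)^\aj=-\la_x$ together with $\la_x=\F^{-1}\ro\lm_x\ro\F$ (Proposition \ref{Cpt}(4)), one writes $\ls_x=\tfrac13\big(\lm_x+\F^{-1}\ro\lm_x\ro\F\big)$, so that $[\ls_x,\F]=\tfrac13\,\F^{-1}\ro[\lm_x,\F^2]$. Expanding $[\lm_x,\F^2]=[\lm_x,\F]\ro\F+\F\ro[\lm_x,\F]$ and substituting $[\lm_x,\F]=-\lm_{\F(x)}$ (Proposition \ref{Cpt}(3)) yields $[\ls_x,\F]=-\tfrac13\big(\F^{-1}\ro\lm_{\F(x)}\ro\F+\lm_{\F(x)}\big)=-\ls_{\F(x)}$.

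To conclude, since $\F:\m\to\m$ is a linear isomorphism, the condition $[\ls_x,\F]=0$ for all $x$ is equivalent to $\ls_y=0$ for all $y\in\m$. But $\ns$ is symplectic, in particular torsion-free, and by \eqref{Tor} its torsion at the origin equals $\ls_x(y)-\ls_y(x)-[x,y]_\m$; so $\ls\equiv 0$ would force $[\m,\m]\subseteq\h$, i.e.\ $G/H$ locally symmetric, against the standing assumption. Concretely, choosing $x_0,y_0\in\m$ with $[x_0,y_0]_\m\neq 0$ yields $\ls_{z_0}\neq 0$ for some $z_0\in\{x_0,y_0\}$, and writing $z_0=\F(x_1)$ we obtain $\ns_{x_1}\F=-\ls_{z_0}\neq 0$. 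Thus $\F$ is not parallel with respect to $\ns$. The only delicate point is the first step — pinning down the sign conventions in the formula for $\ns\F$ and justifying that $\A^{\ns}_{x^\#}$ may be read off at $H$ as the endomorphism $\ls_x$ — since the bracket computation is short once Propositions \ref{L,R} and \ref{Cpt} are in hand.
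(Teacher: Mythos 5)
Your proof is correct, and its skeleton matches the paper's: both reduce the question to whether $[\ls_x,\F]=0$ for all $x\in\m$ (via $G$-invariance of $\F$ and the identification of $\A^{\ns}_{x^\#}$ at the origin with $\ls_x$), and both finish by contradicting the standing hypothesis $[\m,\m]\nsubseteq\h$. The middle and the endgame differ, though. The paper rewrites $[\ls_x,\F]=0$ as the anticommutation relation $\lm_x\ro\F=-\F\ro\lm_x$ and then combines it with Proposition \ref{Cpt}(3) to obtain $3\F\big([x,y]_\m\big)=0$ directly. You instead derive the exact formula $[\ls_x,\F]=-\ls_{\F(x)}$ (I checked the computation: $[\ls_x,\F]=\tfrac13\,\F^{-1}\ro[\lm_x,\F^2]$ together with $[\lm_x,\F]=-\lm_{\F(x)}$ does give it), so that parallelism forces $\ls\equiv 0$, and you then use torsion-freeness of $\ns$, i.e. $\ls_x(y)-\ls_y(x)=[x,y]_\m$, to conclude $[\m,\m]\subseteq\h$. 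Your route buys a genuinely stronger intermediate result --- $\ns_{x^\#_H}\F=-\ls_{\F(x)}$ is a closed-form expression for the covariant derivative, not merely a vanishing criterion --- and a more elementary final step; the paper's version is marginally shorter. The one point you flag as delicate (reading off $\A^{\ns}_{x^\#}$ at $H$ as $\ls_x$ and using $\mathcal{L}_{x^\#}\F=0$) is handled exactly the same way, and with even less justification, in the paper itself, so there is no gap.
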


\begin{proof}
Let $x\in\mathfrak{m}$, and $x^\#\in\vecf(G/H)$ its corresponding fundamental vector field. Since under the identification of $T_H(G/H)$ with $\mathfrak{m}$, we have:
\begin{eqnarray*}
\ns_{x^\#_H}\F&=&\big[\ls_x,\F\big].
\end{eqnarray*}
It follows that $\F$ is parallel with respect to $\ns$ if and only if $\left[\ls_x,\F\right]=0$, which is equivalent (by using Propositions \ref{L,R} and \ref{Cpt}) to
\begin{equation}\label{Wprl}
\lm_x\ro\F=-\F\ro\lm_x.
\end{equation}
We argue by contradiction. Assume that \eqref{Wprl} holds, then for any $x,y\in\mathfrak{m}$, we have:
\begin{eqnarray*}
3\F\big([x,y]_\mathfrak{m}\big)&\stackrel{\rm\eqref{Wprl}}{=}&\big(\F\ro\lm_x-\lm_x\ro\F\big)(y)+\F\big([x,y]_\mathfrak{m}\big)\\
&\stackrel{\rm\ref{Cpt}}{=}&-\lm_{y}\ro\F(x)+\F\big([x,y]_\mathfrak{m}\big)\\
&\stackrel{\rm\eqref{Wprl}}{=}&\F\big([y,x]_\mathfrak{m}\big)+\F\big([x,y]_\mathfrak{m}\big)\\
&=&0.
\end{eqnarray*}
Thus,  since $\F$ is a linear isomorphism, we conclude that $[\m,\m]\subseteq\h$. However, this contradicting the fact that $G/H$ is not locally symmetric.
\end{proof}

Note that, in the case where $G/H$ is locally symmetric, the Zero-One connection and the natural symplectic connection both coincide with the canonical connection of the first (and second) kind. Thus, using the fact that $\F$ is $G$-invariant, one can deduce from \cite[pp. 193-194]{Kob2} that $\F$ is parallel.

\section{The Wallash flag manifold $\SU(3)/\mathbb{T}^2$}

In this section we give some explicit examples of symplectic reductive homogeneous spaces for which the natural symplectic connection is Ricci-positive, Ricci-flat, or Ricci-negative. Thereafter, we prove our last main result.

For the positive case, it is well known that the complex projective space $\operatorname{\cplx P}^n\cong\U(n+1)/\U(n)\times\mathbb{S}^1$ is a compact Hermitian symmetric space and the Fubini-Study metric $\met_{\operatorname{FS}}$ is an $\U(n+1)$-invariant K\"ahler metric on it. Moreover, if we denote by $\Omega_0$ the K\"ahler form, we obtain that $(\operatorname{\cplx P}^n,\Omega_0)$ is a symplectic symmetric space. Thus, since $\operatorname{\cplx P}^n$ is symmetric, the natural symplectic connection coincides with the Levi-Civita connection associated to $\met_{\operatorname{FS}}$, and therefore it is Ricci positive, since $(\operatorname{\cplx P}^n,\met_{\operatorname{FS}})$ is a positive K\"ahler-Einstein manifold. On the other hand, we have seen in Proposition \ref{NilRicFlt} that the natural symplectic connection of any symplectic nilmanifold $(N/\Gamma,\Omega)$ is Ricci-flat. We refere to Examples \ref{rriF1} and \ref{rriF2} for some explicit examples of symplectic nilmanifolds. For the negative case, we consider the following Lie group
\begin{equation*}
G:=\SU(3)=\Big\{A\in\SL(3,\cplx)\,|\,\,\overline{A}^{\T}A=\I_3\Big\}.
\end{equation*}
We denote by $H$ the following closed Lie subgroup of $G$
\begin{equation*}
\mathbb{T}^2:=\left\{\begin{psmallmatrix}
e^{\mathbf{i}\theta_1}&0&0\\
0&e^{\mathbf{i}\theta_2}&0\\
0&0&e^{-\mathbf{i}(\theta_1+\theta_2)}
\end{psmallmatrix}\,|\,\,\theta_1,\theta_2\in{\mathbb{R}}\right\}.
\end{equation*}
The Lie algebras $\mathfrak{g},\mathfrak{h}$ of $G$ and $H$ respectively are given by:
\begin{eqnarray*}
\mathfrak{g}&=&\left\{\begin{psmallmatrix}
\mathbf{i}t_1&x_1+\mathbf{i}x_2&x_3+\mathbf{i}x_4\\
-x_1+\mathbf{i}x_2&\mathbf{i}t_2&x_5+\mathbf{i}x_6\\
-x_3+\mathbf{i}x_4&-x_5+\mathbf{i}x_6&-\mathbf{i}(t_1+t_2)
\end{psmallmatrix}\,|\,\,t_1,t_2,x_1,\ldots,x_6\in{\mathbb{R}}\right\};\\
\h&=&\left\{\begin{psmallmatrix}
\mathbf{i}t_1&0&0\\
0&\mathbf{i}t_2&0\\
0&0&-\mathbf{i}(t_1+t_2)
\end{psmallmatrix}\,|\,\,t_1,t_2\in{\mathbb{R}}\right\}.
\end{eqnarray*}
Moreover, the Killing form of $\mathfrak{g}$ is:
\begin{equation*}
\kappa_\mathfrak{g}:\mathfrak{g}\times\mathfrak{g}\to{\mathbb{R}},\quad\textnormal{written}\quad (X,Y)\mapsto 6\tr(XY).
\end{equation*}
Since $G$ is compact and semisimple, the $-\frac{1}{6}$ of the Killing form induces an $\Ad(G)$-invariant inner product on $\mathfrak{g}$ which will be denoted by $\met$. Let $\mathfrak{m}$ be the orthogonal complement of $\mathfrak{h}$ in $\mathfrak{g}$ with respect to $\met$. Explicitly, it is given by:
\begin{eqnarray*}
\mathfrak{m}&=&\Big\{Y\in\mathfrak{g}\,|\,\,\langle X,Y\rangle =0,\,\forall\,X\in\h\Big\}\\
&=&\Big\{Y\in\mathfrak{g}\,|\,\,\tr(XY)=0,\,\forall\,X\in\h\Big\}\\
&=&\left\{\begin{psmallmatrix}
0&x_1+\mathbf{i}x_2&x_3+\mathbf{i}x_4\\
-x_1+\mathbf{i}x_2&0&x_5+\mathbf{i}x_6\\
-x_3+\mathbf{i}x_4&-x_5+\mathbf{i}x_6&0
\end{psmallmatrix}\,|\,\,x_1,\ldots,x_6\in{\mathbb{R}}\right\}.
\end{eqnarray*}
Consider the following basis of $\mathfrak{g}$:
\begin{eqnarray*}
&&f_1:=\begin{psmallmatrix}
\mathbf{i}&0&0\\
0&0&0\\
0&0&-\mathbf{i}
\end{psmallmatrix},\, f_2:=\begin{psmallmatrix}
0&0&0\\
0&\mathbf{i}&0\\
0&0&-\mathbf{i}
\end{psmallmatrix},\, e_1:=\begin{psmallmatrix}
0&1&0\\
-1&0&0\\
0&0&0
\end{psmallmatrix},\,e_2:=\begin{psmallmatrix}
0&\mathbf{i}&0\\
\mathbf{i}&0&0\\
0&0&0
\end{psmallmatrix}\\
&& e_3:=\begin{psmallmatrix}
0&0&1\\
0&0&0\\
-1&0&0
\end{psmallmatrix},\,e_4:=\begin{psmallmatrix}
0&0&\mathbf{i}\\
0&0&0\\
\mathbf{i}&0&0
\end{psmallmatrix},\, e_5:=\begin{psmallmatrix}
0&0&0\\
0&0&1\\
0&-1&0
\end{psmallmatrix},\,e_6:=\begin{psmallmatrix}
0&0&0\\
0&0&\mathbf{i}\\
0&\mathbf{i}&0
\end{psmallmatrix}.
\end{eqnarray*}
Clearly that $\h=\Span_{\mathbb{R}}\{f_1,f_2\}$ and $\mathfrak{m}=\Span_{\mathbb{R}}\{e_1,\ldots,e_6\}$. Furthermore, the non vanishing Lie brackets in this basis are:
\begin{eqnarray*}
&&[f_2,e_2]=-[f_1,e_2]=-[e_3,e_5]=-[e_4,e_6]=e_1,\\
&&[f_1,e_1]=-[f_2,e_1]=[e_3,e_6]=-[e_4,e_5]=e_2,\\
&&[e_1,e_5]=-[f_2,e_4]=-\tfrac{1}{2}[f_1,e_4]=-[e_2,e_6]=e_3,\\
&&[f_2,e_3]=\tfrac{1}{2}[f_1,e_3]=[e_1,e_6]=[e_2,e_5]=e_4,\\
&&[f_1,e_6]=\tfrac{1}{2}[f_2,e_6]=[e_1,e_3]=[e_2,e_4]=-e_5,\\
&&[f_1,e_5]=\tfrac{1}{2}[f_2,e_5]=-[e_1,e_4]=[e_2,e_3]=e_6,\\
&&[e_1,e_2]=2f_1-2f_2,\,\,[e_3,e_4]=2f_1,\,\,[e_5,e_6]=2f_2.
\end{eqnarray*}
For each $X=(x_1,\ldots,x_6)\in\mathfrak{m}$, the matrix representation of the linear endomorphism $\lm_X:\mathfrak{m}\to\mathfrak{m},\,Y\mapsto[X,Y]_\mathfrak{m}$ in the basis $\{e_k\}_{1\leqslant k\leqslant 6}$ is given by:
\begin{equation}\label{admXSU3}
\begin{pmatrix}
0&0&\Ac&\Bc&-\Ab&-\Bb\\
0&0&-\Bc&\Ac&-\Bb&\Ab\\
-\Ac&\Bc&0&0&\Aa&-\Ba\\
-\Bc&-\Ac&0&0&\Ba&\Aa\\
\Ab&\Bb&-\Aa&-\Ba&0&0\\
\Bb&-\Ab&\Ba&-\Aa&0&0
\end{pmatrix}.
\end{equation}
Similarly, for $Y=(\Ca,\ldots,\Dc)\in\mathfrak{m}$ the matrix representation of the linear endomorphism $D_{X,Y}:\mathfrak{m}\to\mathfrak{m},\,Z\mapsto\left[[X,Z]_\h,Y\right]$ in the basis $\{e_k\}_{1\leqslant k\leqslant 6}$ is:
\begin{equation*}
\left(\begin{array}{cccccc} 4\Ba\Da&-4\Aa\Da&2\Bb\Da&-2\Ab\Da&-2\Bc\Da&2\Ac\Da\\ 
\noalign{\medskip}-4\Ba\Ca&4\Aa\Ca&-2\Bb\Ca&2\Ab\Ca&2\Bc\Ca&-2\Ac\Ca\\
\noalign{\medskip}2\Ba\Db&-2\Aa\Db&4\Bb\Db&-4\Ab\Db&2\Bc\Db&-2\Ac\Db\\
\noalign{\medskip}-2\Ba\Cb&2\Aa\Cb&-4\Bb\Cb&4\Ab\Cb&-2
\Bc\Cb&2\Ac\Cb\\ 
\noalign{\medskip}-2\Ba\Dc&2\Aa\Dc&2\Bb\Dc&-2\Ab\Dc&4
\Bc\Dc&-4\Ac\Dc\\
\noalign{\medskip}2\Ba\Cc&-2\Aa\Cc&-2\Bb\Cc&2\Ab\Cc&-4\Bc\Cc&4\Ac\Cc\end{array}\right).
\end{equation*}
In particular, we have:
\begin{equation}\label{RicSPartI}
\tr\big(\lm_X\ro\lm_Y\big)=-\tr\big(D_{X,Y}\big)=-4\sum_{k=1}^6x_ky_k.
\end{equation}
Let $X_0:=\begin{psmallmatrix}
\mathbf{i}&0&0\\
0&-\mathbf{i}&0\\
0&0&0
\end{psmallmatrix}\in\h$, and consider the following symplectic tensor of $\mathfrak{m}$ defined by:
\begin{eqnarray*}
\w(X,Y)&:=&\tfrac{1}{6}\,\kappa_\mathfrak{g}\left(X_0,[Y,X]\right)\\
&=&\tr(XX_0Y-YX_0X)\\
&=&4(x_1y_2-x_2y_1)+2(x_3y_4-x_4y_3)-2(x_5y_6-x_6y_5),
\end{eqnarray*}
for $X=(x_1,\ldots,x_6),Y=(y_1,\ldots,y_6)\in\mathfrak{m}$. A straightforward verification shows that $\omega$ defines an $\SU(3)$-invariant symplectic structure on the Wallash flag manifold $\SU(3)/\mathbb{T}^2$. Further, the linear isomorphism $\F:\mathfrak{m}\to\mathfrak{m}$ defined in \eqref{W} that relates $\omega$ with $\met$ is equal to $\ad_{X_0}:\mathfrak{m}\to\mathfrak{m}$, and its matrix representation in the basis $\{e_k\}_{1\leqslant k\leqslant 6}$ is:
\begin{equation}\label{WSU3}
\begin{pmatrix}
0&-2&0&0&0&0\\
2&0&0&0&0&0\\
0&0&0&-1&0&0\\
0&0&1&0&0&0\\
0&0&0&0&0&1\\
0&0&0&0&-1&0
\end{pmatrix}.
\end{equation}
The Zero-One connection $\nc$ is defined by the following product
\begin{eqnarray*}
\omega\left(\la_X(Y),Z\right)&:=&\omega\left([X,Z]_\mathfrak{m},Y\right),
\end{eqnarray*}
for $X,Y,Z\in\mathfrak{m}$. To give an explicit formula for this product, we will use the fourth assertion in Proposition \ref{Cpt}. For $X=(x_1,\ldots,x_6)\in\mathfrak{m}$, using \eqref{admXSU3} and \eqref{WSU3}, we get that the matrix representation of the linear endomorphism $\la_{X}:\mathfrak{m}\to\mathfrak{m}$ in the basis $\{e_k\}_{1\leqslant k\leqslant 6}$ is:
\begin{equation}\label{LcSU3}
\begin{pmatrix}
0&0&\tfrac{1}{2}x_5&\tfrac{1}{2}x_6&-\tfrac{1}{2}x_3&-\tfrac{1}{2}x_4\\
0&0&-\tfrac{1}{2}x_6&\tfrac{1}{2}x_5&-\tfrac{1}{2}x_4&\tfrac{1}{2}x_3\\
-2x_5&2x_6&0&0&-x_1&x_2\\
-2x_6&-2x_5&0&0&-x_2&-x_1\\
2x_3&2x_4&x_1&x_2&0&0\\
2x_4&-2x_3&-x_2&x_1&0&0
\end{pmatrix}.
\end{equation}
Analogously, the natural symplectic connection $\ns$ is defined by the following product
\begin{eqnarray*}
\ls_X(Y)&:=&\tfrac{1}{3}\big\{\la_X(Y)+\lm_X(Y)\big\},
\end{eqnarray*}
for $X,Y\in\mathfrak{m}$. Hence, using \eqref{admXSU3} and \eqref{LcSU3}, we obtain that the matrix representation of the linear endomorphism $\ls_X:\mathfrak{m}\to\mathfrak{m}$ in the basis $\{e_k\}_{1\leqslant k\leqslant 6}$ is: 
\begin{equation}\label{LsSU3}
\begin{pmatrix}
0&0&\tfrac{1}{2}x_5&\tfrac{1}{2}x_6&-\tfrac{1}{2}x_3&-\tfrac{1}{2}x_4\\
0&0&-\tfrac{1}{2}x_6&\tfrac{1}{2}x_5&-\tfrac{1}{2}x_4&\tfrac{1}{2}x_3\\
-x_5&x_6&0&0&0&0\\
-x_6&-x_5&0&0&0&0\\
x_3&x_4&0&0&0&0\\
x_4&-x_3&0&0&0&0
\end{pmatrix},
\end{equation}
for $X=(x_1,\ldots,x_6)\in\mathfrak{m}$. To compute the Ricci curvature of $\ns$, we will use the formula \eqref{RicSCpt}. Let $X=(\Aa,\ldots,\Bc),\, Y=(\Ca,\ldots,\Dc)\in\mathfrak{m}$, since the matrix representation of the linear endomorphism $\lm_{\ad_{X_0}(Y)}:\mathfrak{m}\to\mathfrak{m}$ in the basis $\{e_k\}_{1\leqslant k\leqslant 6}$ is:
\begin{equation*}
\begin{pmatrix}
0&0&y_6&-y_5&y_4&-y_3\\
0&0&y_5&y_6&-y_3&-y_4\\
-y_6&-y_5&0&0&-2 y_2&-2 y_1\\
y_5&-y_6&0&0&2 y_1&-2 y_2\\
-y_4&y_3&2 y_2&-2 y_1&0&0\\
y_3&y_4&2 y_1&2 y_2&0&0\\
\end{pmatrix}.
\end{equation*}
Consequently,
\begin{equation}\label{RicSPartII}
\tr\left(\lm_X\ro\ad_{X_0}^{-1}\ro\lm_{\ad_{X_0}(Y)}\right)=-8\sum_{k=1}^2x_ky_k+\sum_{k=3}^6x_ky_k.
\end{equation}
Therefore, using \eqref{RicSPartI} and \eqref{RicSPartII}, we get:
\begin{equation*}
\ric^\mathbf{s}(X,Y)=-4\Big\{2\big(\Aa\Ca+\Ba\Da\big)+\Ab\Cb+\Bb\Db+\Ac\Cc+\Bc\Dc\Big\}.
\end{equation*}

Now, we will prove Theorem \ref{SU3Pref}. First, let us show that $\ns$ is Ricci-parallel, i.e., $\ns\ric^\mathbf{s}=0$. Since $\ric^\mathbf{s}$ is $\SU(3)$-invariant, by using $\mathcal{L}_{X^\#}\ric^\mathbf{s}=0$, one can easily check that $\ns$ is Ricci-parallel if and only if
\begin{equation*}\label{PerfSU3-2-}
\ric^\mathbf{s}\big(\ls_X(Y),Z\big)=-\ric^\mathbf{s}\big(\ls_X(Z),Y\big).
\end{equation*}
Let $X=(x_1,\ldots,x_6),Y=(y_1,\ldots,y_6),$ and $Z=(z_1,\ldots,z_6)\in\mathfrak{m}$. By using \eqref{LsSU3}, we obtain:
\begin{eqnarray*}
\ls_X(Y)&=&\Big(\tfrac{1}{2}\big\{x_5y_3-x_3y_5+x_6y_4-x_4y_6\big\},\,\tfrac{1}{2}\big\{x_5y_4-x_4y_5+x_3y_6-x_6y_3\big\},\\
&&x_6y_2-x_5y_1,\,-x_5y_2-x_6y_1,\,x_3y_1+x_4y_2,\,x_4y_1-x_3y_2\Big).
\end{eqnarray*}
Thus,
\begin{eqnarray*}
\tfrac{1}{4}\ric^\mathbf{s}\big(\ls_X(Y),Z\big)&=&x_3\big\{y_2z_6-y_6z_2+y_5z_1-y_1z_5\big\}+x_4\big\{y_5z_2-y_2z_5+y_6z_1-y_1z_6\big\}\\
&&+x_5\big\{y_1z_3-y_3z_1+y_2z_4-y_4z_2\big\}+x_6\big\{y_3z_2-y_2z_3+y_1z_4-y_4z_1\big\}\\
&=&-\tfrac{1}{4}\ric^\mathbf{s}\big(\ls_X(Z),Y\big).
\end{eqnarray*}
It follows that the natural symplectic connection $\ns$ of the Wallash flag manifold $\SU(3)/\mathbb{T}^2$ is Ricci-parallel. Hence, Theorem \ref{SU3Pref} results from the uniqueness of the preferred symplectic connection of $\SU(3)/\mathbb{T}^2$, and the computations above.\\

We end this paper by noting that similar computations can be carried out for the flag manifold $\SU(4)/\mathbb{T}^3$. However, one can check that the natural symplectic connection $\ns$ is not preferred.


\end{document}